\theoremstyle{plain}
\newtheorem{theorem}{Theorem}
\newtheorem{proposition}[theorem]{Proposition}
\newtheorem{corollary}[theorem]{Corollary}
\newtheorem{lemma}[theorem]{Lemma}
\newtheorem{definition}[theorem]{Definition}
\theoremstyle{remark}
\newtheorem{remark}[theorem]{Remark}
\numberwithin{equation}{section}
\numberwithin{theorem}{section}
\newcommand{\be}%
  {\protect\setcounter{equation}{\value{subsubsection}}}  
\newcommand{\ee}%
  {\protect\setcounter{subsubsection}{\value{equation}}}
\newcommand{\JL}{{\rm JL}}
\newcommand{\GL}{{\rm GL}}
\newcommand{\Sp}{{\rm Sp}}
\newcommand{\A}{\mathbb{A}}
\newcommand{\C}{\mathbb{C}} 
\newcommand{\F}{\mathbb{F}}
\newcommand{\OO}{\mathcal{O}}
\begin{document}

\title {On Symplectic Periods for Inner forms of $\GL_n$}
\author{Mahendra~Kumar~Verma} 
\address{Department of Mathematics \\ 
         Indian Institute of Technology Bombay}
\email{mahendra@math.iitb.ac.in}
\keywords{Symplectic Period, Jacquet-Langlands Correspondence}

\begin{abstract}
In this paper we study the question of determining when an irreducible admissible representation of
$\GL_n(D)$ admits a symplectic model, that is when such a representation has a linear functional 
invariant under $\Sp_n(D)$, where $D$ is a quaternion division algebra over a non-Archimedian local field $k$ and 
$\Sp_{n}(D)$ is the unique non-split inner form of the symplectic group $\Sp_{2n}(k)$.
We show that if a representation has a symplectic model it is necessarily unique. 
For $\GL_2(D)$ we completely classify those representations which have a symplectic model. 
Globally, we show that if a discrete automorphic representation of $\GL_{n}(D_\mathbb{A})$ has a non-zero period for $\Sp_{n}(D_\mathbb{A})$,
then its Jacquet-Langlands lift also has a non-zero symplectic period. A somewhat striking difference between distinction question for $\GL_{2n}(k)$, and $\GL_n(D)$(with respect 
to $\Sp_{2n}(k)$ and $\Sp_n(D)$ resp.) is that there are supercuspidal representations of $\GL_n(D)$ which are distinguished by $\Sp_n(D)$. 
The paper ends by formulating a general question classifying all unitary distinguished representations of $\GL_n(D)$, and proving a part of the local conjectures through a global conjecture.
\end{abstract}
 \maketitle

\tableofcontents

\section{Introduction}
Let $G$ be a group and $H$ a subgroup of $G$. We recall that a complex representation $\pi$ of $G$  is said to be
$H$-distinguished if 
\[
\mathrm{Hom}_H\left(\pi,\mathbb{C}\right)\neq 0,
\]
where $\mathbb{C}$ denotes the trivial representation of $H$. When $G=\GL_{2n}(k)$, and  $H=\Sp_{2n}(k)$, 
such representations of $\GL_{2n}(k)$ are said to have a symplectic model. 
When $k$ is a non-Archimedian local field of characteristic $0$,
and $\pi$ is an irreducible admissible complex representation of $\GL_{2n}(k)$, this question has been extensively studied by several authors starting with the work of 
M. J. Heumos and S. Rallis in \cite{HR90}. 
A rather complete classification of $\Sp_{2n}(k)$-distinguished unitary representations of $\GL_{2n}(k)$ is  due to O. Offen and E. Sayag \cite{OSE07}.

When $F$ is a number field, the analogous global question is framed in terms of the non-vanishing of certain periods
of  automorphic forms $f$ on $G(F)\setminus G(\mathbb A)$, where $\mathbb A$
is the ring of ad\`eles of $F$, given by
\[
\int_{H(F)\setminus H(\mathbb A)} f(h)dh.
\]
This question has  been settled in \cite{Offa, Off} and, in fact, Offen and Sayag treat some aspects of the local questions via global methods.

In this paper we study the irreducible admissible  representations of $\mathrm{GL}_n(D)$ which are $\Sp_n(D)$-distinguished, 
where $\mathrm{Sp}_n(D)$ is an inner form of $\mathrm{Sp}_{2n}(k)$ constructed using the unique quaternion division algebra $D$ over $k$
(we will define this more precisely in Section 2). We proceed to state the main results of this paper.
\begin{theorem}
 \noindent Let $\pi$ be an irreducible admissible representation of $\GL_n(D)$. Then 
\[   \mathrm{dim ~ Hom}_{\Sp_n(D)}\left(\pi, \C\right)\leq 1. \]
\end{theorem}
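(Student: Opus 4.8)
The strategy is to prove that $(\GL_n(D),\Sp_n(D))$ is a Gelfand pair, i.e. that every irreducible admissible representation carries at most a one-dimensional space of $\Sp_n(D)$-invariant functionals, by adapting the Gelfand--Kazhdan method used by Heumos and Rallis for the split pair $(\GL_{2n}(k),\Sp_{2n}(k))$ in \cite{HR90}. Write $G=\GL_n(D)$ and $H=\Sp_n(D)$; since $k$ is non-Archimedean, $G$ is an $\ell$-group and Bernstein's theory of distributions applies to it. Let $d\mapsto\bar d$ be the canonical involution of $D$, extend it entrywise to $M_n(D)$, and set $g^{*}={}^{t}\bar g$; a short computation shows that $g\mapsto g^{*}$ is an involutive anti-automorphism of $M_n(D)$, hence of $G$. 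By the definition of $\Sp_n(D)$ recalled in Section~2 there is an invertible $\varepsilon\in M_n(D)$ with $\varepsilon^{*}=\pm\,\varepsilon$ such that $H=\{g\in G: g^{*}\varepsilon g=\varepsilon\}$. Put $\sigma(g)=\varepsilon^{-1}g^{*}\varepsilon$. Then $\sigma$ is again an involutive anti-automorphism of $G$, and the relation $g^{*}\varepsilon g=\varepsilon$ is equivalent to $\sigma(g)=g^{-1}$, so $\sigma$ restricts to inversion on $H$; in particular $\sigma(H)=H$.

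With this setup, the Gelfand--Kazhdan criterion reduces the theorem to the following assertion: every distribution $T$ on $G$ that is invariant under left and right translation by $H$ satisfies $\sigma_{*}T=T$. Granting this, one obtains in the standard way the bound $\dim\mathrm{Hom}_{H}(\pi,\C)\cdot\dim\mathrm{Hom}_{H}(\tilde\pi,\C)\le 1$ for every irreducible admissible $\pi$. To pass from this product bound to $\dim\mathrm{Hom}_{H}(\pi,\C)\le 1$ I would use the automorphism $\theta(g)=\sigma(g)^{-1}=\varepsilon^{-1}(g^{*})^{-1}\varepsilon$ of $G$, which is the involution cutting out $H$ as its fixed-point group: one has $\tilde\pi\cong\pi\circ\theta$, since up to an inner automorphism $g\mapsto(g^{*})^{-1}$ is the analogue for $\GL_n(D)$ of the Gelfand--Kazhdan involution on $\GL_n(k)$ (this can be checked directly, or via the Jacquet--Langlands correspondence and the known formula for the contragredient on $\GL_{2n}(k)$), while $\theta$ restricts to the identity on $H$ because $\sigma$ restricts to inversion there; hence $\mathrm{Hom}_{H}(\tilde\pi,\C)=\mathrm{Hom}_{H}(\pi\circ\theta,\C)=\mathrm{Hom}_{H}(\pi,\C)$, and the bound forces both to be $\le 1$. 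I regard this last deduction as secondary; the real content is the distributional statement.

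For the distributional statement I would invoke Bernstein's localization principle: writing $G$ as the disjoint union of its locally closed $H\times H$-orbits, it suffices to show (i) that $\sigma$ carries each orbit to itself, i.e. $\sigma(x)\in HxH$ for every $x\in G$, and (ii) that along the resulting stratification the characters by which each stabilizer $H_x$ acts on the symmetric powers of the conormal space to the orbit at $x$ never contain the modulus character dictated by $\sigma$ — a sign/unimodularity bookkeeping of exactly the kind carried out in \cite{HR90}. For (i) it is convenient to use the symmetric-space picture: the map $g\mapsto g\,\theta(g)^{-1}$ identifies $H\backslash G/H$ with the set of $H$-conjugacy classes on the associated symmetric variety, which here is (the $G$-orbit of $\varepsilon$ inside) the space of nondegenerate $(\mp)$-Hermitian $D$-forms on $D^{n}$. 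In these terms (i) amounts to showing that every element of the symmetric variety is $H$-conjugate to its inverse, equivalently to classifying pairs of $(\mp)$-Hermitian forms on $D^{n}$ up to simultaneous isometry and checking that the resulting Jordan/Kronecker-type normal forms are stable under reversal — the quaternionic analogue of the elementary-divisor theory for pencils of alternating forms over a field. Carrying out this linear algebra over the division algebra $D$, together with the conormal computation in (ii), is the step I expect to be the main obstacle: the surrounding architecture is identical to the split case, but the presence of the noncommutative coefficient algebra $D$, its canonical involution, and its reduced norm is precisely where genuine new work is required.
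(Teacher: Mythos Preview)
Your overall architecture is correct and matches the paper's: set up the anti-involution $\sigma(g)=J\,{}^t\bar g\,J$ (your $\varepsilon^{-1}g^*\varepsilon$), show it preserves each $(H,H)$-double coset, apply the Gelfand--Kazhdan/Prasad criterion to get the product bound, and then use $\tilde\pi\cong\pi^\theta$ with $\theta|_H=\mathrm{id}$ to collapse the product bound to a single bound. For this last step the paper simply quotes Raghuram's thesis result that $\pi^\sigma\cong\hat\pi$ for $\GL_n(D)$, which is exactly the statement you propose to check.

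The genuine difference is in the proof of the key orbit statement $\sigma(x)\in HxH$. You propose to attack it by the symmetric-space picture and a Kronecker/elementary-divisor classification of pairs of $(\mp)$-Hermitian forms over $D$, and you correctly flag this as the hard step. The paper bypasses this linear algebra entirely with a Galois-cohomology argument: for fixed $A$ it shows that the variety $V(A)=\{(P_1,P_2)\in H\times H: A^J=P_1AP_2\}$ is a principal homogeneous space under $\Sp(J,J')=H\cap AHA^{-1}$; over $\bar k$ this variety is nonempty by Klyachko's computation (which is where the split Heumos--Rallis input enters), and Klyachko's description of $\Sp(J,J')$ as an extension of a product of symplectic groups by a unipotent group combines with Kneser--Bruhat--Tits ($H^1(k,G)=0$ for $G$ simply connected semisimple) to give $H^1(k,\Sp(J,J'))=0$, hence $V(A)(k)\neq\emptyset$. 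This is considerably cleaner than carrying out normal-form theory over a noncommutative $D$, and it is uniform in $n$.

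A minor remark: your item (ii), the conormal/character bookkeeping via Bernstein localization, is more machinery than needed here. Since both $G$ and $H$ are unimodular and the anti-involution preserves each double coset, the classical Gelfand--Kazhdan argument (as packaged in Prasad's lemma, which the paper quotes) already gives that every bi-$H$-invariant distribution is $\sigma$-fixed; no stratum-by-stratum character analysis is required.
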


 The following theorem gives a partial answer to the question on distinction of a supercuspidal representation of $\GL_n(D)$ by $\Sp_n(D)$.
\begin{theorem}
 Let $\pi$ be a supercuspidal representation of $\GL_n(D)$ with Langlands parameter $\sigma_{\pi}=\sigma\otimes {\rm sp} _r$ where $\sigma$ is an irreducible representation of the Weil group $W_k$, and ${\rm sp} _r$ is the $r$-dimensional irreducible representation of ${\rm SL}_2(\mathbb C)$ part of the Weil-Deligne group $W_k'$. Then if $r$ is odd, $\pi$ 
is not distinguished by $\Sp_n(D)$.
\end{theorem}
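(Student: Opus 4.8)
The plan is to transfer the question to $\GL_{2n}(k)$ through the Jacquet--Langlands correspondence and a globalisation, and then to invoke the vanishing of the symplectic period on cusp forms of $\GL_{2n}$. So suppose, for contradiction, that $\mathrm{Hom}_{\Sp_n(D)}(\pi,\C)\neq 0$.

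The first step is to identify $\JL(\pi)$. Since $\pi$ is supercuspidal and $D$ is quaternionic, $\JL(\pi)$ is an essentially square-integrable representation $\mathrm{St}_s(\rho)$ of $\GL_{2n}(k)$, with $\rho$ supercuspidal on $\GL_{2n/s}(k)$ and $s=s(\pi)$ the invariant attached to $\pi$ by the correspondence; this $s$ divides the reduced degree $2$ of $D$, so $s\in\{1,2\}$, and comparison of $L$-parameters identifies $s$ with the integer $r$ of the statement and $\sigma$ with the parameter of $\rho$. Hence the hypothesis that $r$ is odd forces $r=1$, i.e. $\JL(\pi)=\rho$ is itself supercuspidal on $\GL_{2n}(k)$. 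This is the only place where the parity of $r$ enters, and it is exactly what will make the globalised representation cuspidal below; for $r=2$ the argument must break down, consistent with the existence of $\Sp_n(D)$-distinguished supercuspidals.

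The second step is a globalisation. Fix a number field $F$ and a place $v_0$ with $F_{v_0}\cong k$, and a quaternion division algebra $\mathcal D$ over $F$ with $\mathcal D_{v_0}\cong D$, split at all archimedean places. By the standard globalisation of supercuspidal representations there is a cuspidal automorphic representation $\Pi=\otimes_v'\Pi_v$ of $\GL_n(\mathcal D_{\mathbb A})$ with $\Pi_{v_0}\cong\pi$; one arranges moreover that the components at the remaining ramified places of $\mathcal D$ (and, if necessary, at one auxiliary split place) are $\Sp_n(\mathcal D_v)$-distinguished by representations admitting a local test vector with non-zero local period. The crucial and most delicate point is to choose the globalisation so that the global period $\int_{\Sp_n(\mathcal D)(F)\backslash\Sp_n(\mathcal D)(\mathbb A)}\phi(h)\,dh$ does not vanish for some $\phi\in\Pi$; I would establish this by a relative Poincar\'e series argument, building $\phi$ from the chosen local data (a compactly supported matrix coefficient at $v_0$, by supercuspidality, and spherical vectors at almost all places) and showing that the resulting period equals, up to a non-zero partial $L$-value, the product of the non-vanishing local periods.

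Given such a $\Pi$, the main global theorem of this paper --- that the Jacquet--Langlands lift of a discrete automorphic representation with non-zero $\Sp_n(\mathcal D_{\mathbb A})$-period again carries a non-zero symplectic period --- yields that $\JL(\Pi)$ has non-zero symplectic period on $\GL_{2n}(\mathbb A)$. But $\JL(\Pi)_{v_0}=\JL(\pi)=\rho$ is supercuspidal, whereas every local component of a residual (Speh-type) automorphic representation of $\GL_{2n}(\mathbb A)$ is a Langlands quotient and never supercuspidal; hence $\JL(\Pi)$ is cuspidal. This contradicts the theorem of Jacquet--Rallis that the symplectic period vanishes identically on cusp forms of $\GL_{2n}(\mathbb A)$ --- equivalently, one may contradict Heumos--Rallis, that no supercuspidal representation of $\GL_{2n}(k)$ is $\Sp_{2n}(k)$-distinguished, together with the fact that a non-zero global symplectic period forces every local component to be distinguished. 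The resulting contradiction shows $\pi$ is not $\Sp_n(D)$-distinguished when $r$ is odd. The entire difficulty of the argument is concentrated in the second step, namely the globalisation together with the non-vanishing of its global period; everything else is formal.
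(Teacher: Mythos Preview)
Your overall architecture matches the paper's: assume local distinction, globalise to a globally $\Sp_n(D_{\mathbb A})$-distinguished automorphic $\Pi$, transfer by Jacquet--Langlands to $\GL_{2n}(\mathbb A_F)$, and obtain a contradiction there. Two points deserve comment.

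First, the globalisation step. You acknowledge this as the crux and sketch a relative Poincar\'e series construction. The paper does not carry this out either; it simply invokes the globalisation theorem of Prasad--Schulze-Pillot \cite{PS08}, which produces from a locally distinguished supercuspidal a cuspidal automorphic representation with non-vanishing global period. So your ``most delicate point'' is in fact available off the shelf, and you should cite it rather than attempt an ad hoc construction.

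Second, and more substantively, you assert $\JL(\Pi)_{v_0}=\JL(\pi)$ without comment. This is not automatic: Badulescu's global transfer does not in general commute with the local Jacquet--Langlands map at ramified places; the local component of $\JL(\Pi)$ at $v_0$ can be either the discrete series $\JL(\pi_{v_0})$ or its Aubert--Zelevinsky dual, a Speh module. The paper handles both possibilities explicitly: the discrete series option is generic, hence not $\Sp_{2n}(F_v)$-distinguished by Heumos--Rallis; the Speh option, with parameter $\sigma\otimes(\nu^{(r-1)/2}\oplus\cdots\oplus\nu^{-(r-1)/2})$, is ruled out for $r$ odd by the Offen--Sayag classification. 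Your preliminary reduction to $r=1$ (correct: $(r,n)=1$ by the supercuspidality criterion of Deligne--Kazhdan--Vign\'eras/Badulescu, and $r\mid 2n$ with $r$ odd forces $r\mid n$, hence $r=1$) happens to collapse the two options into one, since a supercuspidal is its own Zelevinsky dual. So your argument is ultimately fine, but you have silently passed over a genuine subtlety that the paper confronts head-on; you should at least remark that the ambiguity in the local component is harmless precisely because $r=1$.

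In summary: your route is a mild streamlining of the paper's for the supercuspidal case (reducing to $r=1$ first, then invoking Jacquet--Rallis directly), whereas the paper's argument, by treating both local options, extends verbatim to arbitrary discrete series once one grants the globalisation (as noted in the paper's Remark following the theorem).
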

In section 6, we have  constructed explicit examples of supercuspidal 
representations of $\GL_n(D)$ 
which are distinguished by $\Sp_n(D)$ 
for any odd $n\geq 1$, and in section 7 we prove a complete classification of 
discrete series representations of $\GL_n(D)$ 
which are distinguished by $\Sp_n(D)$ assuming globalization of locally distinguished representations to globally distinguished
representations together with a natural global conjecture on distinction of automorphic representations of $\GL_n(D)$ 
by $\Sp_n(D)$. 

Here is a global theorem which is a simple consequence of Offen and Sayag's work.

\begin{theorem}
Let $D$ be a quaternion division algebra over $F$ and $D_{\mathbb{A}} = D \otimes_{F} \mathbb{A}$. 
Let $\Pi$ be an automorphic representation of $\GL_{n}(D_\mathbb{A})$ which appears in the discrete spectrum
of $\GL_{n}(D_\mathbb{A})$ and has non-vanishing period integral on   $\Sp_n(D)\setminus\Sp_n(D_\mathbb{A})$. 
Let $\JL(\Pi)$ be the Jacquet-Langlands lift of $\Pi$. Then the representation ${\rm JL}(\Pi)$ of $\GL_{2n}(\mathbb{A}_F)$ has 
non-vanishing period integral on $\Sp_{2n}(F)\setminus \Sp_{2n}(\mathbb{A}_F)$.
\end{theorem}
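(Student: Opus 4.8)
The plan is to relate the symplectic period on $\GL_n(D_\mathbb{A})$ to the symplectic period on $\GL_{2n}(\mathbb{A}_F)$ by passing through a comparison of period integrals that is made possible by the characterization, due to Offen and Sayag, of discrete automorphic representations of $\GL_{2n}(\mathbb{A}_F)$ with a nonvanishing symplectic period. Recall that their work shows such a period is nonzero precisely when $\JL(\Pi)$ is itself a Jacquet-Langlands-type transfer: more precisely, a discrete automorphic representation $\Sigma$ of $\GL_{2n}(\mathbb{A}_F)$ has nonvanishing $\Sp_{2n}$-period if and only if $\Sigma$ is the Speh representation attached to a cuspidal representation of $\GL_n(\mathbb{A}_F)$ (equivalently, $\Sigma$ lies in the image of the residual/functorial construction from $\GL_n$). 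So the entire content of the theorem is to show that if $\Pi$ has nonvanishing $\Sp_n(D)$-period, then $\JL(\Pi)$ has the Speh form dictated by the Offen--Sayag criterion.

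First I would record the structure of the discrete spectrum of $\GL_n(D_\mathbb{A})$ via Badulescu's extension of the Moeglin--Waldspurger classification and of the global Jacquet-Langlands correspondence: a discrete automorphic representation $\Pi$ of $\GL_n(D_\mathbb{A})$ is of the form $\mathrm{Speh}_D(\rho, \ell)$ for a cuspidal representation $\rho$ of $\GL_{n/\ell}(D_\mathbb{A})$ (with $\ell \mid n$), and $\JL(\Pi)$ is the Speh representation $\mathrm{Speh}(\JL(\rho), \ell)$ of $\GL_{2n}(\mathbb{A}_F)$ built from the cuspidal (or discrete) representation $\JL(\rho)$ of $\GL_{2n/\ell}(\mathbb{A}_F)$. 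Next, I would use the nonvanishing of the $\Sp_n(D)$-period on $\Pi$ to constrain $\ell$ and $\rho$: the key point is a local-global compatibility. At every place $v$ where $D$ splits, $\Pi_v$ is $\Sp_{2n}(F_v)$-distinguished (this follows by choosing test vectors supported near such a place, using that the period integral is Eulerian on the relevant model, together with uniqueness from Theorem 1.1 in the split local case, which is the Heumos--Rallis/Offen--Sayag result), and at the ramified places, $\Pi_v$ is $\Sp_n(D_v)$-distinguished. Then, applying the local-to-global direction of the Offen--Sayag theory (their period is nonzero iff every local Hom-space is nonzero \emph{and} a global obstruction, controlled by an $L$-value or an epsilon-factor, vanishes) pushed through the Jacquet-Langlands correspondence: one needs that $\JL(\Pi_v)$ is $\Sp_{2n}(F_v)$-distinguished for all $v$, and that the global period of $\JL(\Pi)$ over $\Sp_{2n}(F)\backslash\Sp_{2n}(\mathbb{A}_F)$ is then forced to be nonzero.

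The cleanest route, and the one I would actually carry out, is to unfold both periods against the Whittaker--Fourier expansion. For a Speh representation the $\Sp_{2n}$-period of an automorphic form unfolds (after Offen--Sayag) to an Eulerian integral of Whittaker functions times a spherical-function-type kernel, and the local factors of this integral are exactly the local symplectic functionals. The same unfolding works on $\GL_n(D_\mathbb{A})$ for the $\Sp_n(D)$-period of $\mathrm{Speh}_D(\rho,\ell)$, producing an Eulerian product of local $\Sp_n(D_v)$-functionals on the Whittaker model of $\Pi_v$. The Jacquet-Langlands correspondence matches the Whittaker models place by place (up to the transfer of data) and, crucially, at split places the local functionals literally coincide, while at ramified places one invokes a local identity comparing the $\Sp_n(D_v)$-functional on $\Pi_v$ with the $\Sp_{2n}(F_v)$-functional on $\JL(\Pi_v)$ — this is where the local harmonic analysis on the quaternionic symplectic group enters and is the main obstacle. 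Granting this local matching, nonvanishing of the global product for $\Pi$ forces nonvanishing of each local factor, hence of each local factor for $\JL(\Pi)$, and the Offen--Sayag global criterion (the relevant $L$-value at $s=1/2$, or the residue, being nonzero — which holds automatically for the Speh representations in the image of $\JL$, since $\JL(\Pi)$ is of Speh type over $\GL_n(\mathbb{A}_F)$) then yields nonvanishing of the $\Sp_{2n}(F)\backslash\Sp_{2n}(\mathbb{A}_F)$ period of $\JL(\Pi)$.

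The step I expect to be the genuine obstacle is the \emph{ramified local comparison}: showing that $\Sp_n(D_v)$-distinction of $\Pi_v$ implies $\Sp_{2n}(F_v)$-distinction of $\JL(\Pi_v)$, together with a compatibility of the associated functionals strong enough to conclude nonvanishing of an Eulerian product. For discrete series this should follow from a Plancherel/transfer-of-orbital-integrals argument or from an explicit description of the $\Sp_n(D_v)$-functional on discrete series (analogous to the open-orbit computation of Heumos--Rallis), but at this level of generality it is exactly the kind of local statement that the later sections of the paper are designed to address, so for the global theorem one may legitimately quote the split-place analysis plus the structural classification and defer the ramified subtlety to the observation that the set of ramified places is finite and that $\JL$ preserves the Speh structure — which is what actually pins down membership in the Offen--Sayag list.
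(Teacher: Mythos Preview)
Your proposal takes a much harder route than necessary, and the step you flag as the ``genuine obstacle'' is not merely difficult but actually false as stated, so the argument as written has a real gap.

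The paper's proof is essentially two lines. The key input you are missing is the full strength of the Offen--Sayag local-global principle (the paper's Theorem~\ref{local-global}): for an irreducible representation $\Sigma$ in the discrete spectrum of $\GL_{2n}(\mathbb{A}_F)$, global $\Sp_{2n}(\mathbb{A}_F)$-distinction is equivalent to local $\Sp_{2n}(F_{v_0})$-distinction at \emph{a single} finite place $v_0$. Granting this, the theorem is immediate: if $\Pi$ has nonvanishing $\Sp_n(D_\mathbb{A})$-period, then $\Pi_v$ is $\Sp_n(D_v)$-distinguished for every $v$; choose any finite place $v_0$ at which $D$ splits, so that $\Sp_n(D_{v_0})=\Sp_{2n}(F_{v_0})$ and $\JL(\Pi)_{v_0}=\Pi_{v_0}$; then $\JL(\Pi)_{v_0}$ is $\Sp_{2n}(F_{v_0})$-distinguished, and Offen--Sayag gives global distinction of $\JL(\Pi)$. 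No unfolding, no Eulerian factorization, no ramified comparison is needed.

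Your proposed ramified local comparison --- that $\Sp_n(D_v)$-distinction of $\Pi_v$ implies $\Sp_{2n}(F_v)$-distinction of $\JL(\Pi_v)$ --- fails. The paper exhibits (Remark~\ref{remark1}) irreducible principal series $\pi=\chi_1\times\chi_2$ of $\GL_2(D_v)$ that are $\Sp_2(D_v)$-distinguished, while $\JL(\pi)=\chi_1{\rm St}_2\times\chi_2{\rm St}_2$ is generic on $\GL_4(F_v)$ and hence not $\Sp_4(F_v)$-distinguished. So the local transfer of distinction through Jacquet--Langlands simply does not hold at ramified places, and any Eulerian-product strategy that requires it cannot close. (The resolution, noted in the paper's Remark following Theorem~\ref{global}, is that Badulescu's \emph{global} $\JL$ need not agree with the local $\JL$ at ramified places; but that only underscores that your place-by-place matching cannot be carried out.) A secondary issue: your unfolding against Whittaker models is problematic because the relevant discrete-spectrum representations are Speh modules, which are non-generic, so one would need degenerate Whittaker models and a corresponding unfolding, neither of which you set up.
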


We now briefly describe the organization of this paper. 
In Section \ref{Notation}, we set up notation and give definitions. 
In this section we define the inner forms of a symplectic group over a local field $k$. 
In Section 3, we prove the uniqueness of the symplectic model for irreducible representations of $\GL_n(D)$.
In section 4, we are able to completely analyze the question of distinction of subquotients of principal series representations
of $\GL_2(D)$ by $\Sp_2(D)$ via Mackey theory.
In Section 5, we prove that  non-vanishing of symplectic period of an irreducible discrete spectrum automorphic representation of 
 $\GL_{n}(D_\mathbb{A})$ is preserved under the Jacquet-Langlands correspondence. In this section, we partially analyze distinction problem for  supercuspidal representations of $\GL_n(D)$. In Section 6, we construct  examples of supercuspidal representations of $\GL_n(D)$
which are distinguished by $\Sp_n(D)$. The paper ends by formulating a general question classifying all unitary distinguished representations of $\GL_n(D)$, and proving a part of the local conjectures through a global conjecture.\\

\noindent {\bf Acknowledgements}. I am sincerely grateful to Dipendra Prasad for suggesting the problem and answering my
questions patiently; in particular the example in Section 6, and the conjectures in section 7 are due to him. This paper would have not been possible without his help and ideas. I also thank him for reading the article carefully many times.
I wish to thank my advisor Ravi Raghunathan for  continuous encouragement  during the preparation of this article. I am also thankful
to the Council for Scientific and Industrial Research for financial support.

\section{Notation and Definitions}\label{Notation}
\noindent Let $k$ be a non-Archimedian local field of characteristic zero, and let $D$ be the unique quaternion division 
algebra  over $k$. We denote the reduced trace and reduced norm maps on $D$ by $T_{D/k}$
and $N_{D/k}$ respectively.  Let $\tau$ be the involution on $D$ defined by 
$x\rightarrow \overline{x}=T_{D/k}(x)-x$.\\
 \noindent For $n\in\mathbb{N}$, let
\[V_n=e_1D\oplus....\oplus e_nD\ \]
be a right $D$-vector space of dimension $n$. 
\begin{definition}
We define a Hermitian form on $V_n$ by 
\begin{enumerate}
\item $(e_i,e_{n-j+1})=\delta_{ij}$ for $i=1,2,\ldots,n$;
\item $(v,v{'})=\tau(v{'},v)$;
\item $(vx,v{'}x{'})=\tau(x)(v,v{'})x{'}$,
for $v,v{'}\in V_n, x,x{'}\in D.$
\end{enumerate}
\end{definition}
\noindent Let $\Sp_n(D)$ be the group of isometries of the Hermitian form $(\cdot,\cdot)$. The group $\Sp_n(D)$ is the unique non-split inner form of 
the group $\Sp_{2n}(k)$. Clearly $\Sp_n(D)\subset \GL_n(D).$ The group $\Sp_n(D)$ can also be defined as
\[\Sp_n(D)=\left\{A\in \GL_n(D)|~AJ~{}^t\bar A=J\right\},\] where ${}^t\bar{A} = (\bar{a}_{ji})$ for $A=(a_{ij})$ and
 $$J=\begin{pmatrix}
   & & & & & 1\\
   & & & & 1 & \\
  & & & 1&  & \\
 & & .& & &\\
& .& & & &\\
1& & & & &\\
   
\end{pmatrix}$$\\
For a right $D$-vector space $V$, let $\GL_D(V)$ be the group of all invertible $D$-linear transformations on $V$. 
Similarly, let $\Sp_D(V)$ be the group of all invertible $D$-linear transformations on $V$ which preserve the above defined Hermitian form on $V$.
Let $\nu$ denote the character of $\GL_n(D)$ which is the absolute value of the reduced norm on the group $\GL_n(D).$
For any $p$-adic group $G$, let $\delta_G$ denote the modular character of $G.$ We denote the trivial representation of any group 
by $\C.$
For any representation $\pi$, we will denote its contragredient representation by $\hat{\pi}.$

\section{Uniqueness of symplectic models}

In this section we will show that for an irreducible representation $\pi$ of $\GL_n(D)$,
$\mathrm{dim~ Hom}_{\Sp_n(D)}\left(\pi,\C\right)\leq 1$.
This result is due to M. J. Heumos and S.  Rallis \cite{HR90} when $D$ is replaced by a local field $k$. Our proof is a
straightforward adaptation of their methods. We first need a result from \cite{Raghu} which gives the realization of the 
contragredient representation of an irreducible representation of $\GL_n(D)$. 
\begin{theorem} \label{Raghuram}
Let $D$ be the quaternion division algebra over $k$, $x\rightarrow \overline{x}=T_{D/k}(x)-x$ be the canonical 
anti-automorphism of order 2 on $D$. Let $G =\GL_n(D)$, and let $\sigma :G\rightarrow G$ be the automorphism of $G$ 
given by $\sigma(g)=J\left(^t\bar{g}^{-1}\right)J$, where $\bar{g}=(\overline g_{ij})$ and
$J$ is the anti-diagonal matrix with all entries 1.
Let $\pi$ be an irreducible admissible representation of $\GL_n(D)$ and $\pi^{\sigma}$ be the
representation defined
 by $\pi^{\sigma}(g)=\pi(\sigma(g))$. Then $\pi^{\sigma}={\hat{\pi}}$, where ${\hat{\pi}}$ is the
 contragredient of $\pi$.
\end{theorem}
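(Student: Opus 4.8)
The plan is to reduce the statement to an identity of Harish--Chandra characters, and from there to a statement about conjugacy classes in $\GL_n(D)$.

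First I would dispose of the anti-diagonal matrix $J$. Since $J^2$ is the identity, conjugation by $J$ is an inner automorphism of $\GL_n(D)$, so $\pi^\sigma$ is isomorphic, via the intertwiner $\pi(J)$, to the representation $\pi^\rho$ given by $\pi^\rho(g)=\pi({}^t\bar g^{-1})$; hence it suffices to prove $\pi^\rho\cong\hat\pi$. A one-line matrix computation, using that $x\mapsto\bar x$ is an anti-automorphism of $D$, shows that $\ast\colon g\mapsto{}^t\bar g$ is an anti-automorphism of order two of $\GL_n(D)$, so $\rho(g)=(g^\ast)^{-1}$ is indeed an automorphism (and, being an algebraic automorphism of a reductive group, it preserves Haar measure and carries the regular semisimple set $G^{\mathrm{rs}}$ onto itself). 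Now, $\GL_n(D)$ being a reductive $p$-adic group, Harish--Chandra's theory says an irreducible admissible representation is determined up to isomorphism by the locally constant, conjugation-invariant function $\Theta$ on $G^{\mathrm{rs}}$ representing its distribution character. Hence $\Theta_{\pi^\rho}(g)=\Theta_\pi(\rho(g))=\Theta_\pi(({}^t\bar g)^{-1})$ while $\Theta_{\hat\pi}(g)=\Theta_\pi(g^{-1})$ on $G^{\mathrm{rs}}$; replacing $g$ by $g^{-1}$ (using $(h^{-1})^\ast=(h^\ast)^{-1}$), the theorem reduces to the identity
\[ \Theta_\pi\!\left({}^t\bar h\right)=\Theta_\pi(h),\qquad h\in G^{\mathrm{rs}}. \]

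The geometric input I would then prove is that for every regular semisimple $h$ the element $h^\ast={}^t\bar h$ is $\GL_n(D)$-conjugate to $h$; together with the conjugation-invariance of $\Theta_\pi$ this finishes the proof. Since $\ast$ is a $k$-linear anti-automorphism of $M_n(D)$ fixing the scalars, one has $p(h)^\ast=p(h^\ast)$ for every $p\in k[X]$ (powers of a single element commute), so $h$ and $h^\ast$ have the same minimal polynomial; for $h$ regular semisimple this polynomial is squarefree of degree $2n$ and equals the reduced characteristic polynomial. Therefore $h$ and $h^\ast$ become conjugate after extension of scalars to a splitting field, i.e.\ they are stably conjugate. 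On the other hand the centralizer $Z_{\GL_n(D)}(h)=k[h]^\times$ is the unit group of the \'etale $k$-algebra $k[h]=\prod_i L_i$, hence a product of Weil restrictions $\prod_i\mathrm{Res}_{L_i/k}\gm$, whose Galois cohomology $H^1$ vanishes by Shapiro's lemma and Hilbert~90. Since the $\GL_n(D)$-conjugacy classes inside the stable class of $h$ are classified by a subset of $H^1\!\big(k,Z_{\GL_n(D)}(h)\big)$, each stable conjugacy class of regular semisimple elements is a single rational conjugacy class, and in particular $h^\ast$ is conjugate to $h$.

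The main obstacle is this last conjugacy statement: in an inner form of $\GL_{2n}$, equality of reduced characteristic polynomials gives only stable conjugacy a priori, and the point that rescues the argument is the feature — special to $\GL_n(D)$ — that every maximal torus is induced, so the relevant Galois cohomology is trivial. I would also note a more self-contained alternative that avoids Harish--Chandra's character theory: prove the Gelfand--Kazhdan ``transpose lemma'' directly, namely that every $\GL_n(D)$-conjugation-invariant distribution on $\GL_n(D)$ is fixed by $f\mapsto f\circ\ast$, by stratifying $\GL_n(D)$ along conjugacy classes and checking the symmetry of invariant distributions on each stratum; this is the original method of Gelfand and Kazhdan for $\GL_n(k)$ and adapts here because the needed geometric fact is again $h\sim h^\ast$. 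I would present the character argument first, as the shorter of the two.
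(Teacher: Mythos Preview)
The paper does not prove this theorem at all: it is quoted from Raghuram's thesis \cite{Raghu} and used as a black box, so there is no in-paper argument to compare against. Your proposal supplies a complete and correct proof.

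Your approach via Harish--Chandra characters is the standard one for results of Gelfand--Kazhdan type, and each step checks out. A couple of minor remarks. First, your detour through minimal polynomials to establish stable conjugacy of $h$ and $h^\ast$ is fine, but a slightly cleaner way to see it is: after base change to a splitting field $L$, the anti-automorphism $\ast$ on $M_n(D)\otimes_k L\cong M_{2n}(L)$ is, by Skolem--Noether, of the form $X\mapsto C\,{}^tX\,C^{-1}$, and a matrix is always conjugate to its transpose in $\GL_{2n}(L)$; this works for all $h$, not only regular semisimple ones. Second, your identification of the centraliser as $\prod_i \mathrm{Res}_{L_i/k}\mathbb{G}_m$ deserves one more word of justification: since $k[h]$ is a maximal \'etale subalgebra of the central simple algebra $M_n(D)$ (its $k$-dimension equals the degree $2n$), the double centraliser theorem gives $Z_{M_n(D)}(k[h])=k[h]$, whence the centraliser group scheme is exactly the induced torus you wrote down, and Hilbert~90 applies. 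With that, the vanishing of $H^1$ of the centraliser forces the stable class to be a single rational class, and the character identity follows.

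The Gelfand--Kazhdan distributional variant you sketch as an alternative is indeed the method closer to the original source for $\GL_n(k)$, and is likely closer to what is in Raghuram's thesis; your character-theoretic argument is the more economical of the two and is entirely adequate here.
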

Let $k$ be a local field of characteristic different from 2, $\bar{k}$ the  algebraic closure of $k$ and $M$ (resp. $\bar{M}$)
 denote the set of $n\times n$ matrices with coefficients in $k$ (respectively $\bar{k}$). Let $\sigma$ denote an anti-automorphism
 on $\bar{M}$ of order 2. We will record two lemmas from \cite{HR90} below.

\begin{lemma}[Lemma 2.2.1 of \cite{HR90}]
For any $A\in \GL_n(k)$, there exists a polynomial $f\in \bar{k}[t]$ such that $f(A)^2=A$.
 \end{lemma}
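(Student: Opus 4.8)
The plan is to reduce the existence of a polynomial square root to a congruence problem modulo the minimal polynomial of $A$, and then to solve that congruence one primary factor at a time. Write the minimal polynomial of $A$, viewed over $\bar{k}$, as $m(t) = \prod_{i=1}^{r}(t-\lambda_i)^{e_i}$; since $A$ is invertible, each $\lambda_i \neq 0$. The key observation is that it suffices to find $f \in \bar{k}[t]$ with $m(t) \mid f(t)^2 - t$: writing $f(t)^2 - t = m(t)q(t)$ and evaluating at $A$ gives $f(A)^2 - A = m(A)q(A) = 0$.

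Next I would solve the congruence $f(t)^2 \equiv t \pmod{(t-\lambda_i)^{e_i}}$ separately for each $i$. Substituting $t = \lambda_i + u$, I seek a polynomial $h_i(u)$ of degree $< e_i$ with $h_i(u)^2 \equiv \lambda_i + u \pmod{u^{e_i}}$. Because $\operatorname{char} k \neq 2$, the nonzero scalar $\lambda_i$ has a square root $\sqrt{\lambda_i} \in \bar{k}$, and the formal binomial series $\sqrt{\lambda_i}\sum_{j \ge 0}\binom{1/2}{j}(u/\lambda_i)^{j}$ has all coefficients in $\bar{k}$; truncating it at degree $e_i - 1$ produces the desired $h_i$. (Equivalently, one may invoke Hensel's lemma in the local Artinian ring $\bar{k}[t]/((t-\lambda_i)^{e_i})$, in which $t$ is a unit and the derivative $2X$ of $X^2 - t$ is invertible, lifting the solution $\sqrt{\lambda_i}$ of $X^2 = \lambda_i$ modulo $(t-\lambda_i)$.)

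Finally I would glue the local solutions via the Chinese Remainder Theorem: the ideals $(t-\lambda_i)^{e_i} \subset \bar{k}[t]$ are pairwise comaximal, so there is $f \in \bar{k}[t]$ with $f(t) \equiv h_i(t-\lambda_i) \pmod{(t-\lambda_i)^{e_i}}$ for every $i$. Then $f(t)^2 \equiv h_i(t-\lambda_i)^2 \equiv t \pmod{(t-\lambda_i)^{e_i}}$ for all $i$, hence $f(t)^2 \equiv t$ modulo the product $m(t)$, which is exactly what is needed.

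The argument is essentially formal, and the only places the hypotheses are genuinely used are that invertibility of $A$ keeps $0$ out of the spectrum (so that $\sqrt{t}$ is well defined near each eigenvalue) and that $\operatorname{char} k \neq 2$ (so that both $\sqrt{\lambda_i}$ and the coefficients $\binom{1/2}{j}$ lie in $\bar{k}$). The only point requiring a little care — really the sole spot where an error could enter — is the verification that the degree-$(e_i-1)$ truncation of the binomial series solves the congruence to the required order $e_i$; this is a short induction on $e_i$.
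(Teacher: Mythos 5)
The paper does not prove this lemma; it simply imports it as Lemma~2.2.1 of Heumos--Rallis. Your argument is a correct and essentially the standard proof: reduce the problem to solving $f(t)^2\equiv t$ in the quotient ring $\bar{k}[t]/(m(t))$, split that ring by CRT into the local Artinian pieces $\bar{k}[t]/((t-\lambda_i)^{e_i})$, and produce a local square root of the unit $t$ in each piece either by truncating the binomial series $\sqrt{\lambda_i}\sum_j\binom{1/2}{j}(u/\lambda_i)^j$ or, equivalently, by Hensel's lemma (the derivative $2X$ of $X^2-t$ is a unit because $\operatorname{char}k\neq 2$). The one point you flag as needing care — that the degree-$(e_i-1)$ truncation $h_i$ of the full formal series $H$ still satisfies $h_i(u)^2\equiv\lambda_i+u\pmod{u^{e_i}}$ — is in fact immediate, no induction required: writing $H=h_i+O(u^{e_i})$ gives $H^2=h_i^2+2h_iO(u^{e_i})+O(u^{2e_i})\equiv h_i^2\pmod{u^{e_i}}$, and $H^2=\lambda_i+u$ identically.
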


\begin{proposition}[Proposition 2.2.2 of \cite{HR90}]\label{Prop222}
 For any $A\in \GL_{n}(\bar{k})$, there exists $U, V\in \GL_{n}(\bar{k})$ such that $\sigma(U)=U,\sigma(V)=V^{-1}$ and $A=UV$.
\end{proposition}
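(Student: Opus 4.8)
The proposition is a ``$\sigma$-polar decomposition'' of $A$: the factor $V$ should behave like a $\sigma$-orthogonal matrix and $U$ like a $\sigma$-symmetric one, and the plan is to produce $U$ first, as a $\sigma$-symmetric square root of $C:=A\,\sigma(A)$, and then to set $V:=U^{-1}A$. The motivation is the identity forced by the desired factorization: recalling that $\sigma$ is a $\bar{k}$-linear anti-automorphism of $\bar{M}$ with $\sigma^{2}=\mathrm{id}$ (so that $\sigma(XY)=\sigma(Y)\sigma(X)$ and $\sigma(p(X))=p(\sigma(X))$ for all $p\in\bar{k}[t]$), if $A=UV$ with $\sigma(U)=U$ and $\sigma(V)=V^{-1}$, then $A\,\sigma(A)=UV\,\sigma(V)\,\sigma(U)=UVV^{-1}U=U^{2}$. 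One also notes at once that $C\in\GL_{n}(\bar{k})$ and
\[
\sigma(C)=\sigma\bigl(A\,\sigma(A)\bigr)=\sigma(\sigma(A))\,\sigma(A)=A\,\sigma(A)=C,
\]
so $C$ is $\sigma$-fixed.

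Next I would invoke the preceding lemma to produce a polynomial $g\in\bar{k}[t]$ with $g(C)^{2}=C$, and set $U:=g(C)$. (That lemma is stated for matrices over $k$, but its proof, which only uses the Jordan decomposition over an algebraically closed field, holds verbatim with $k$ replaced by $\bar{k}$.) Then $U\in\GL_{n}(\bar{k})$ because $C$ is invertible, and, crucially, $U$ is $\sigma$-symmetric: since $U$ is a polynomial in $C$ and $\sigma(C)=C$,
\[
\sigma(U)=\sigma\bigl(g(C)\bigr)=g\bigl(\sigma(C)\bigr)=g(C)=U.
\]

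Finally I would put $V:=U^{-1}A$, so that $A=UV$ and $V\in\GL_{n}(\bar{k})$ by construction. It remains to check $\sigma(V)=V^{-1}$. Since $\sigma(U^{-1})=\sigma(U)^{-1}=U^{-1}$, we get $\sigma(V)=\sigma(A)\,\sigma(U^{-1})=\sigma(A)\,U^{-1}$, whereas $V^{-1}=A^{-1}U$; hence $\sigma(V)=V^{-1}$ amounts to $A\,\sigma(A)=U^{2}$, which holds by the choice of $U$. This completes the argument.

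I do not anticipate a genuine obstacle here; the argument is purely formal once one has a square root of $C$ that is $\sigma$-symmetric, and this is precisely the point where the preceding lemma is needed: an arbitrary square root of $C$ need not be $\sigma$-fixed, whereas a square root lying in the commutative subalgebra $\bar{k}[C]$ automatically is, because $\sigma$ fixes $C$. The only steps requiring a little care are the passage from $k$ to $\bar{k}$ in the square-root lemma and the order reversal of products under the anti-automorphism $\sigma$.
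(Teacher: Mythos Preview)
Your argument is correct and is precisely the standard proof: form the $\sigma$-symmetric element $C=A\,\sigma(A)$, take $U=g(C)$ a polynomial square root via the preceding lemma so that $\sigma(U)=U$, and set $V=U^{-1}A$. The paper itself does not supply a proof of this proposition---it is quoted verbatim from \cite{HR90}---but your proof is exactly the one given there, and the preceding square-root lemma is recorded in the paper for exactly this purpose.
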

 
Set $A^J=J~{}^t\bar AJ$ for $A\in \GL_n(D)$. Then $A \rightarrow  A^J$ is an anti-involution on $\GL_n(D)$ of order $2.$  By Proposition \ref{Prop222}, over an algebraically closed field, there exist $U,V\in{GL_{2n}(\bar{k})}$, such that $V^J=V^{-1}, U^J=U$ and
$A=UV.$ Then $A^J=V^JU^J=V^{-1}U=V^{-1}AV^{-1}.$ Since $V\in \Sp_{2n}(\bar k)$ if and only if $V\in \GL_{2n}(\bar k)$ and $V^J=V^{-1}$,  
$A^J$ and $A$ lie in the same double cosets over algebraic closure.

The next result shows that $A$ and $A^J$ lie in the
same double coset of $\Sp_n(D)$ in $\GL_n(D)$. 
Let us first recall a theorem due to Kneser and Bruhat-Tits.
\begin{theorem}\label{KBT}
 Let $G$ be any semi-simple simply connected group over $p$-adic field $k.$ Then $H^1(k,G)=0$.
\end{theorem}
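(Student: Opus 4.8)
This is the Kneser--Bruhat--Tits theorem, and the plan is to prove it in two stages: a structural reduction to absolutely almost simple groups, followed by a verification carried out according to the type. For the reduction, recall that a semisimple simply connected $k$-group $G$ is a direct product of groups $\mathrm{R}_{\ell/k}(G')$ with $\ell/k$ finite separable and $G'$ absolutely almost simple and simply connected over $\ell$. Since $H^1(k,\cdot)$ carries finite products to products and Shapiro's lemma gives $H^1(k,\mathrm{R}_{\ell/k}G')\cong H^1(\ell,G')$, with $\ell$ again a $p$-adic field of characteristic $0$, it suffices to prove $H^1(\ell,G')=1$ for $G'$ absolutely almost simple and simply connected over a $p$-adic field.

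For the classical types this is Kneser's theorem. In inner type $A_{n-1}$ one has $G=\mathrm{SL}_m(A)$, the reduced-norm-one group of a central simple $\ell$-algebra $A$; the sequence $1\to\mathrm{SL}_m(A)\to\GL_m(A)\xrightarrow{\mathrm{Nrd}}\mathbb{G}_m\to1$, together with the vanishing of $H^1(\ell,\GL_m(A))$ (a generalization of Hilbert's Theorem 90 for the algebra $A$), identifies $H^1(\ell,\mathrm{SL}_m(A))$ with $\ell^\times/\mathrm{Nrd}(A^\times)$, which is trivial because the reduced norm is surjective over a local field. Outer type $A$ is handled via special unitary groups of hermitian forms, type $C$ via symplectic forms (unique over any field, so $H^1(\ell,\mathrm{Sp}_{2n})=1$ outright) and quaternionic hermitian forms, and types $B,D$ via quadratic forms and the exact sequence $1\to\mu_2\to\mathrm{Spin}\to\mathrm{SO}\to1$, using that a $p$-adic quadratic form with trivial discriminant and Hasse invariant is hyperbolic. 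In every case one invokes the classification of the relevant forms over a $p$-adic field.

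The exceptional types are the main difficulty. Groups of type $G_2$, $F_4$, $E_8$ are simultaneously simply connected and adjoint. For $G_2$ the pointed set $H^1(\ell,G_2)$ classifies octonion $\ell$-algebras, which are detected by a class in $H^3(\ell,\mathbb{Z}/2)=0$, so $H^1=1$; $F_4$ is treated the same way with Albert algebras, whose cohomological invariants all lie in degrees $\ge3$ and hence vanish because $\mathrm{cd}(\ell)\le2$. The remaining cases — simply connected $E_6$, $E_7$, the trialitarian ${}^3D_4$, ${}^6D_4$, and $E_8$ — have no convenient algebraic model, and for them (and in fact for a uniform treatment of the whole theorem) I would use the Bruhat--Tits building: $G(\ell)$ acts on its building, every class in $H^1(\ell,G)$ is representable by a cocycle valued in a parahoric subgroup, and the associated parahoric $\mathcal{O}_\ell$-group scheme $\mathcal{G}_x^{\circ}$ is smooth with \emph{connected} special fibre — precisely because $G$ is simply connected — so that $H^1(\mathcal{O}_\ell,\mathcal{G}_x^{\circ})=1$ by Hensel's lemma together with Lang's theorem over the finite residue field, whence the class is trivial. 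The statement is thus a known case of Serre's Conjecture II; the genuine obstacle throughout is the exceptional groups, and, in the building approach, the step of reducing an arbitrary cohomology class to one supported on a parahoric.
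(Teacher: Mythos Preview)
The paper does not prove this theorem; it is simply recalled as a known result due to Kneser and Bruhat--Tits and then invoked as a black box in the proof of the subsequent proposition. Your outline is a correct summary of how the theorem is actually established in the literature: the reduction via Weil restriction and Shapiro's lemma to the absolutely almost simple simply connected case, Kneser's case-by-case treatment of the classical types, and the Bruhat--Tits building argument (smooth parahoric group schemes with connected special fibre, Lang's theorem over the finite residue field, Hensel's lemma) for the exceptional types or for a uniform proof. You also correctly flag that the substantive step in the building approach is reducing an arbitrary $1$-cocycle to one valued in a parahoric. Since the paper offers no argument of its own, there is no comparison to make beyond noting that your account is the expected one.
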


The theorem above will be used in conjunction with our modification of Lemma 2.3.3 \cite{HR90}
given below.

\begin{proposition}\label{Prop231}  
Let $D$ be a quaternion division algebra over a local field $k$ of characteristic zero. 
Let $A\in \GL_{n}(D)$. Then there exist $P_1,P_2\in \Sp_{n}(D)$, such that $A^J=P_1AP_2$.
\end{proposition}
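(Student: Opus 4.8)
The plan is to reduce the statement over $D$ to the corresponding statement over the algebraically closed field $\bar k$ (which has just been recorded: $A$ and $A^J$ lie in the same $\Sp_{2n}(\bar k)$-double coset, via the factorization $A = UV$ with $U^J = U$, $V^J = V^{-1}$) together with a Galois cohomology argument to descend from $\bar k$ to $k$. The key point is that both $\GL_n(D)$ and $\Sp_n(D)$ become, after base change to $\bar k$, the groups $\GL_{2n}(\bar k)$ and $\Sp_{2n}(\bar k)$ respectively; so I view the equation $A^J = P_1 A P_2$ as asking for a $k$-point of a certain variety that we already know has a $\bar k$-point.

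First I would set up the relevant algebraic variety. Fix $A \in \GL_n(D)$. Let $X$ be the set of pairs $(P_1, P_2) \in \Sp_n \times \Sp_n$ (as an algebraic variety over $k$, using the inner form $\Sp_n(D)$) satisfying $A^J = P_1 A P_2$. Over $\bar k$ this is the set of pairs $(\bar P_1, \bar P_2) \in \Sp_{2n}(\bar k)^2$ with $A^J = \bar P_1 A \bar P_2$, which is nonempty by Proposition \ref{Prop222} and the computation $A^J = V^{-1} A V^{-1}$ preceding Theorem \ref{KBT}: indeed one may take $\bar P_1 = \bar P_2 = V^{-1}$, and in fact $X_{\bar k}$ is a torsor under the stabilizer group $S = \{(Q_1,Q_2) \in \Sp_{2n}^2 : Q_1 A Q_2 = A\}$, which acts simply transitively. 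Thus $X$ is a torsor under the $k$-group $S$, and the obstruction to the existence of a $k$-point lies in $H^1(k, S)$.

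Next I would identify the stabilizer group $S$ and show its relevant cohomology vanishes. The map $(Q_1, Q_2) \mapsto Q_1$ identifies $S$ with a subgroup of $\Sp_{2n}$ (since $Q_2 = A^{-1} Q_1^{-1} A$ is determined by $Q_1$); concretely $S \cong \{Q \in \Sp_{2n} : A^{-1} Q^{-1} A \in \Sp_{2n}\}$, i.e. $Q$ lies in $\Sp_{2n}$ and simultaneously preserves the form transported by $A$. This is the intersection of two symplectic groups inside $\GL_{2n}$, hence (over $\bar k$, and as a $k$-group) a symplectic-type group — more precisely it is the isometry group of a pencil of alternating forms, whose connected component is reductive; being an inner form it is obtained by twisting $\Sp_{2n}$-type groups. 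The essential input is Theorem \ref{KBT}: $\Sp_n(D)$ itself is semisimple simply connected, so $H^1(k, \Sp_n(D)) = 0$; I would argue that the stabilizer $S$, or at least its identity component, is likewise semisimple simply connected (a product of symplectic groups over possibly larger division algebras — this is the structure of centralizers of semisimple-ish elements in symplectic groups), so that $H^1(k, S^\circ) = 0$, and then handle the component group. Since $X$ is a torsor, vanishing of $H^1(k, S)$ gives $X(k) \neq \emptyset$, i.e. the desired $P_1, P_2 \in \Sp_n(D)$.

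The main obstacle is the precise determination of the stabilizer group $S$ and the verification that it has trivial Galois cohomology. Over $k$ this requires understanding the centralizer of the anti-involution data at $A$, which splits into blocks according to the behavior of $A$ under $\sigma$ (matching Proposition \ref{Prop222}'s $U,V$ decomposition) — the $V$-part contributes a symplectic factor, and the delicate part is the $U$-part (the $\sigma$-fixed factor), whose isometry group must be checked to be simply connected so that Theorem \ref{KBT} applies. An alternative, and perhaps cleaner, route that I would pursue in parallel is to mimic Lemma 2.3.3 of \cite{HR90} more literally: first use the $\bar k$-decomposition $A = UV$, observe $A$ and $A^J$ differ by the action of $V^{-1} \in \Sp_{2n}(\bar k)$, then descend the splitting $A = UV$ itself to $k$ using a Springer-type argument on $H^1(k, \Sp_n(D)) = 0$ applied to the variety of such decompositions — this packages the whole cohomological computation into the single vanishing statement of Theorem \ref{KBT} and avoids having to analyze $S$ directly.
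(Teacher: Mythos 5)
Your overall strategy coincides with the paper's: realize the solution set $V(A)=\{(P_1,P_2)\in \Sp_n(D)^2 : A^J=P_1AP_2\}$ as a torsor under the stabilizer group $\Sp(J,J')=\Sp_n(D)\cap A\Sp_n(D)A^{-1}$, note it has a $\bar k$-point by Proposition~\ref{Prop222}, and deduce a $k$-point from the vanishing of $H^1(k,\Sp(J,J'))$. This is exactly the paper's proof. However, there is a genuine gap in the crucial structural input: you assert that $\Sp(J,J')$, the isometry group of a pencil of alternating forms, has reductive (indeed simply connected semisimple) identity component, ``a product of symplectic groups over possibly larger division algebras.'' This is false in general. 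The correct structure theorem, due to Klyachko and cited in the paper, is that over an algebraically closed field $\Sp(J,J')$ is an \emph{extension of a product of symplectic groups by a unipotent group}; it typically has a nontrivial unipotent radical (this already happens for $\GL_4$, i.e.\ $n=1$, when the two forms are not in ``general position''). Your cohomology argument as stated therefore does not go through directly, because you are applying Theorem~\ref{KBT} to a group that need not be semisimple.

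The fix is exactly what the paper does: use the short exact sequence $1\to U\to \Sp(J,J')\to S\to 1$ with $U$ unipotent and $S$ a $k$-form of a product of symplectic groups, then use the long exact cohomology sequence together with the two separate vanishing facts $H^1(k,U)=0$ (for unipotent groups over a characteristic-zero field) and $H^1(k,S)=0$ (Kneser--Bruhat--Tits, since $S$ is semisimple simply connected). Your worry about a component group is harmless (the extension of connected groups by a connected unipotent group is connected), but the unipotent radical is not optional. Your ``alternative route'' of descending the decomposition $A=UV$ directly does not avoid the difficulty either: the set of such decompositions of $A$ is a torsor under a group that is again essentially $\Sp(J,J')$, not under $\Sp_n(D)$ itself, so the same structural analysis of the stabilizer would be needed.
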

\begin{proof} 
Consider the set
\begin{equation*}
V(A)=\{(P_1,P_2)\in \Sp_n(D)\times \Sp_n(D)|A^J=P_1AP_2 \}.
\end{equation*}
The assertion contained in the proposition is equivalent to saying that $V(A)$ is non-empty. Clearly $V(A)$ is an algebraic subset 
of $\Sp_{2n}(\bar k)\times\Sp_{2n}(\bar k)$. Note that $A\cap A\Sp_n(D)A^{-1}$ is the subgroup of $\GL_n(D)$ which leaves the symplectic form associated with the matrix
 $J'={}^t\bar AJA^{-1}$
 invariant.
Denote the group $\Sp_n(D)\cap A\Sp_n(D)A^{-1}$ by $\Sp(J,J')$.
 Consider the right action of $\Sp(J,J')$
  on $V(A)$ by
 $R\left(P_1,P_2\right)=\left(P_1R^{-1},A^{-1}RAP_2\right)$. 
Since $P_1R^{-1}AA^{-1}RAP_2=P_1AP_2=A^J$, $(P_1R^{-1},A^{-1}RAP_2)=R(P_1,P_2)\in V(A)$,
we have,
\[
\begin{array}{rl}
 R(P_1,P_2)&=(P_1R^{-1},A^{-1}RAP_2),\\
S(R(P_1,P_2))&=(P_1R^{-1}S^{-1}, A^{-1}SAA^{-1}RAP_2),\\
&=(P_1R^{-1}S^{-1}, A^{-1}SRAP_2)
\end{array}.
\]
for $R,S\in \Sp(J,J')$ and $\left(P_1,P_2\right)\in V(A)$, verifying that we do indeed have an action.
We check that this action is fixed point free. This is because
 if $R(P_1, P_2)=(P_1, P_2)$ for $R\in \Sp(J,J')$ and $(P_1, P_2)\in V(A)$, then $P_1R^{-1}=P_1$ which gives $R=1$
 
We next check that the action is transitive. For this let $P=(P_1, P_2)$ and $Q=(Q_1, Q_2)$ be two points 
in $V(A)$. We need to prove that there exists $R\in \Sp(J,J')$ such that $RP=Q$, that is, that $R(P_1,P_2)=(Q_1,Q_2)$, 
or equivalently that
\[
(P_1R^{-1},A^{-1}RAP_2)=(Q_1, Q_2).
\]
Let $R=Q_1^{-1}P_1\in \Sp_n(D)$ then $P_1R^{-1}=Q_1$. With this choice of $R$
 $$A^{-1}RAP_2=A^{-1}Q_1^{-1}P_1AP_2=A^{-1}Q_1^{-1}Q_1AQ_2=Q_2.$$
 \indent In the second equality we have used the definition of $V(A)$ because of which $A^J=P_1AP_2=Q_1AQ_2.$
Also $P_1AP_2=Q_1AQ_2$ gives 
\[
R=Q_1^{-1}P_1=AQ_2P_2^{-1}A^{-1}\in A\Sp_n(D)A^{-1}.
\]
Hence, $R\in\Sp(J,J')$ which shows that the action of ${\Sp}(J,J')$ on $V(A)$ is transitive.
Therefore $V(A)$ is a right principal homogeneous space 
for the group $\Sp(J,J')$.

 Klyachko proved that over an algebraically closed field, ${\Sp}(J,J')$ is an extension of a product 
 of symplectic groups by a unipotent group.  Therefore, over a general field,  $\Sp(J,J')$ is an extension of a form 
 of a product of symplectic groups by a unipotent group, that is, there exists an exact sequence of algebraic groups of the form
\begin{equation*}
 1\rightarrow U \rightarrow \Sp(J,J')\rightarrow S \rightarrow 1,
\end{equation*}
with $S$, a form of a product of symplectic groups. Therefore we get the following exact sequence of  Galois cohomology sets:
\begin{equation*}
 H^1(k,U)\rightarrow H^1(k,\Sp(J,J'))\rightarrow H^1(k,S).
\end{equation*}
It is well-known that $H^1(k, U)=0$ for any unipotent group $U$ over a field of characteristic zero \cite{Serre}.
Since by Theorem \ref{KBT}, $H^1(k,S)=0$, the exact sequence above  gives $H^1(k,\Sp(J,J'))=0$. 
Since  $V(A)$ is a principal homogeneous for $\Sp(J,J')$ and $H^1(k,\Sp(J,J'))=0$, it follows that $V(A)(k)\neq \emptyset$, proving the proposition. 
\end{proof}
We recall the following result from \cite{DP90}.
\begin{lemma}\label{Prasad}
Let $G$ be an $l$-group and $H$ be a closed subgroup of $G$ such that $G/H$ carries a $G$-invariant measure. Suppose
 $x\rightarrow\bar{x}$ is an anti-automorphism of $G$ which leaves $H$ invariant and acts trivially on those distributions
 on $G$ which are $H$
 bi-invariant. Then for any smooth irreducible representation $\pi$ of $G$,
$\dim \mathrm{Hom}_{H}(\pi,\C)\cdot \dim \mathrm{Hom}_{H}(\hat\pi,\C)\leqslant 1$.
\end{lemma}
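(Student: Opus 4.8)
The plan is to prove this by the Gelfand--Kazhdan distributional method (as adapted by Heumos--Rallis and Prasad). It suffices to show that if both $\mathrm{Hom}_H(\pi,\C)$ and $\mathrm{Hom}_H(\hat\pi,\C)$ are non-zero then each is one-dimensional. The hypothesis that $G/H$ carries a $G$-invariant measure is equivalent to $\delta_G|_H=\delta_H$, and with this, Frobenius reciprocity for smooth induction gives a canonical isomorphism $\mathrm{Hom}_H(\rho,\C)\cong\mathrm{Hom}_G\!\big(\rho,C^\infty(H\backslash G)\big)$ for every smooth $\rho$; so a non-zero $\ell\in\mathrm{Hom}_H(\pi,\C)$ yields a $G$-embedding $T_\ell\colon\pi\hookrightarrow C^\infty(H\backslash G)$, $v\mapsto\big(g\mapsto\ell(\pi(g)v)\big)$, injective because $\pi$ is irreducible, and likewise for $\hat\pi$.

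For non-zero $\ell\in\mathrm{Hom}_H(\pi,\C)$ and $\ell'\in\mathrm{Hom}_H(\hat\pi,\C)$ I would attach the ``relative matrix coefficient'' distribution $B_{\ell,\ell'}$ on $G$: for $f\in C^\infty_c(G)$ the operator $\pi(f)$ is of finite rank (as $\pi$ is admissible), hence lies in $\pi\otimes\hat\pi$, and one puts $B_{\ell,\ell'}(f)=\langle\ell\otimes\ell',\pi(f)\rangle$. From $\pi(L_hf)=\pi(h)\pi(f)$, the analogous identity for right translations, the $H$-invariance of $\ell$ and of $\ell'$, and the equality of modular characters, $B_{\ell,\ell'}$ is an $H$-bi-invariant distribution; equivalently it is the $G$-invariant kernel of the $G$-morphism $T_\ell\circ\widehat{T_{\ell'}}\colon C^\infty_c(H\backslash G)\to C^\infty(H\backslash G)$ under the Schwartz-kernel identification of Bernstein--Zelevinsky. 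Because $\{\pi(f):f\in C^\infty_c(G)\}$ spans the full space $\pi\otimes\hat\pi$ of finite-rank operators (the density theorem for irreducible smooth admissible representations), the induced linear map
\[
\Theta\colon\mathrm{Hom}_H(\pi,\C)\otimes\mathrm{Hom}_H(\hat\pi,\C)\longrightarrow\mathcal{D}(G)^{H,H},\qquad \ell\otimes\ell'\longmapsto B_{\ell,\ell'},
\]
into the space $\mathcal{D}(G)^{H,H}$ of $H$-bi-invariant distributions on $G$ is injective.

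Finally I would bring in the anti-automorphism $x\mapsto\bar x$. It preserves $H$, so the twist $\tau(D)(f):=D(\bar f)$ with $\bar f(g)=f(\bar g)$ preserves $\mathcal{D}(G)^{H,H}$, and by hypothesis $\tau$ is the identity there. Substituting $g\mapsto\bar g$ in the integral defining $\pi(f)$ and using $\overline{g_1g_2}=\bar g_2\bar g_1$ identifies $\tau(B_{\ell,\ell'})$ with a relative matrix coefficient distribution built from the same two functionals but in which the roles of $\pi$ and $\hat\pi$ are interchanged (after the canonical identification $\widehat{\hat{\pi}}=\pi$ and the inversion $g\mapsto g^{-1}$). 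Since $\tau=\mathrm{id}$ this yields the symmetry of $B_{\ell,\ell'}$ under that interchange; combined with the injectivity of $\Theta$, the usual Gelfand--Kazhdan rank argument then forces $\dim\mathrm{Hom}_H(\pi,\C)\cdot\dim\mathrm{Hom}_H(\hat\pi,\C)\le 1$ — concretely, if $\dim\mathrm{Hom}_H(\pi,\C)\ge 2$ while $\mathrm{Hom}_H(\hat\pi,\C)\ne 0$, the symmetry exhibits a non-zero element of the tensor product killed by $\Theta$, a contradiction.

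The step I expect to be the main obstacle is this last one: pinning down \emph{exactly} how the $\bar\cdot$-twist transforms $B_{\ell,\ell'}$ — carrying along the modular characters, the inversion $g\mapsto g^{-1}$, and the identification $\widehat{\hat{\pi}}=\pi$ so that one genuinely recovers the transposition of the tensor factors with no spurious scalar or character twist — and then running the rank argument cleanly. This is the only place where the hypothesis that $\bar\cdot$ fixes all $H$-bi-invariant distributions is used in an essential way. The remaining ingredients (the Schwartz-kernel theorem for $l$-groups, the density theorem giving injectivity of $\Theta$, and the reductions involving the invariant measures) are standard once correctly set up.
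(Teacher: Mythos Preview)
The paper does not give its own proof of this lemma: it is simply quoted as a known result from \cite{DP90}. Your sketch is the Gelfand--Kazhdan argument that Prasad carries out there, so in substance you are reproducing the very proof the paper is citing. The ingredients you list --- Frobenius reciprocity to embed $\pi$ and $\hat\pi$ into $C^\infty(H\backslash G)$, the bi-invariant distribution $B_{\ell,\ell'}$ attached to a pair of functionals, injectivity of $\Theta$ via the density theorem, and then the anti-involution to force the rank bound --- are exactly the ones used.

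One comment on the step you flag as the obstacle. Rather than producing ``a non-zero element of the tensor product killed by $\Theta$,'' the cleaner way to close the argument is to observe that the image of $\Theta$, viewed inside $\mathrm{Hom}_G\!\big(C_c^\infty(H\backslash G),\,C^\infty(H\backslash G)\big)$ via the Schwartz kernel identification, is stable under composition (equivalently, under convolution of the corresponding distributions): any such composite factors through $\pi$ and hence, by Schur, is again of the form $T_\ell\circ \widehat{T_{\ell'}}$ up to scalar. The hypothesis that $x\mapsto\bar x$ fixes every $H$-bi-invariant distribution says precisely that this convolution algebra is commutative; but the image of $\Theta$ is a ``rank-one matrix'' type algebra modeled on $\mathrm{Hom}_H(\pi,\C)\otimes\mathrm{Hom}_H(\hat\pi,\C)$, and such an algebra is commutative only when the product of the dimensions is at most $1$. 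Phrasing it this way spares you from tracking the identification $\widehat{\hat\pi}=\pi$ and the modular characters through the twist.
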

\begin{corollary}
Let $G=\GL_n(D), H=\Sp_n(D)$, and let $i$ be the anti-automorphism on $G$ given by $A\rightarrow  {}^JA^{-1}$
Then for any smooth irreducible representation $\pi$ of $G$,
$\dim \mathrm{Hom}_{\Sp_n(D)}\left(\pi,\C\right)\cdot\dim \mathrm{Hom}_{\Sp_n(D)}\left(\hat\pi,\C\right)\leqslant 1$.
\end{corollary}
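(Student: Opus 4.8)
The plan is to deduce the Corollary from Lemma~\ref{Prasad}, applied to $G=\GL_n(D)$, $H=\Sp_n(D)$, and the anti-involution $i\colon A\mapsto A^J=J\,{}^t\bar A\,J$. Three hypotheses of that lemma must be checked: that $G/H$ carries a $G$-invariant measure, that $i$ leaves $H$ stable, and that $i$ acts trivially on the $H$-bi-invariant distributions on $G$.

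The first two are routine. Since $\GL_n(D)$ is reductive and $\Sp_n(D)$ is semisimple, both groups are unimodular, so $\delta_G|_H=1=\delta_H$ and $G/H$ admits a $G$-invariant measure. For the second, using $J^2=I$ one rewrites the defining relation $AJ\,{}^t\bar A=J$ of $\Sp_n(D)$ as $A^J=A^{-1}$; hence $i(A)=A^J=A^{-1}\in\Sp_n(D)$ whenever $A\in\Sp_n(D)$, so $i(H)=H$. The same computation shows that the automorphism $\sigma\colon g\mapsto(g^J)^{-1}$ of Theorem~\ref{Raghuram} restricts to the identity on $H$; this is what will let one combine the bound obtained here with Theorem~\ref{Raghuram} to conclude $\dim\mathrm{Hom}_{\Sp_n(D)}(\pi,\C)\le 1$, i.e.\ uniqueness of the symplectic model.

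The hard part is the third hypothesis, and it is supplied by Proposition~\ref{Prop231}. Being an anti-automorphism that stabilizes $H$, the map $i$ sends each double coset $HAH$ onto $Hi(A)H$; by the standard Gelfand--Kazhdan argument it then acts trivially on $H$-bi-invariant distributions as soon as it fixes every double coset setwise, i.e.\ as soon as $A^J\in HAH$ for all $A\in\GL_n(D)$ --- and this is precisely Proposition~\ref{Prop231}, where $P_1,P_2\in\Sp_n(D)$ with $A^J=P_1AP_2$ were produced. Lemma~\ref{Prasad} then gives $\dim\mathrm{Hom}_{\Sp_n(D)}(\pi,\C)\cdot\dim\mathrm{Hom}_{\Sp_n(D)}(\hat\pi,\C)\le 1$ for every smooth irreducible $\pi$. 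Thus the whole difficulty was already absorbed into Proposition~\ref{Prop231} --- Klyachko's structure of $\Sp(J,J')$ together with the vanishing of $H^1(k,-)$ on unipotent groups and, through Theorem~\ref{KBT}, on simply connected semisimple groups --- and the Corollary is a formal consequence; the only delicate residual point is the Gelfand--Kazhdan passage from double cosets to bi-invariant distributions, which needs the $H\times H$-orbit structure on $G$ to be tame enough that such a distribution is recovered from its restrictions to double cosets.
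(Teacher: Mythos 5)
Your proposal is correct and follows exactly the route the paper takes: apply Lemma~\ref{Prasad} to the anti-involution $A\mapsto A^J=J\,{}^t\bar A\,J$, checking unimodularity of $G$ and $H$ for the invariant measure on $G/H$, the identity $A^J=A^{-1}$ on $\Sp_n(D)$ for $H$-stability, and Proposition~\ref{Prop231} plus the standard Gelfand--Kazhdan localization argument for the triviality of the action on $H$-bi-invariant distributions. The paper's own proof is the one-liner that the hypotheses of Lemma~\ref{Prasad} follow from Proposition~\ref{Prop231} ``by standard methods in Gelfand--Kazhdan theory,'' so you have simply supplied the routine verifications it leaves implicit, and you have correctly taken the anti-automorphism to be $A\mapsto A^J$ (the map literally printed in the corollary, $A\mapsto{}^JA^{-1}$, is the automorphism $(A^J)^{-1}$ and would not fit the lemma's hypotheses).
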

\begin{proof}
The hypotheses of Lemma \ref{Prasad} follow from Proposition \ref{Prop231} by standard methods in Gelfand-Kazhdan theory.
Hence, the corollary is an immediate consequence of Lemma \ref{Prasad}.
\end{proof}
 We are now in a position to prove the main theorem of this section.
\begin{theorem}
 Let $\pi$ be an irreducible admissible representation of $\GL_n(D)$. Then
 $\dim {\rm Hom}_{\Sp_n(D)}(\pi,\C)\leqslant 1$.
\end{theorem}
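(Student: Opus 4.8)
The plan is to deduce the theorem from the corollary just proved together with the realization of the contragredient supplied by Theorem \ref{Raghuram}. Write $m(\pi) = \dim \mathrm{Hom}_{\Sp_n(D)}(\pi,\C)$. The corollary gives $m(\pi)\cdot m(\hat\pi) \leq 1$, so it is enough to establish $m(\pi) = m(\hat\pi)$; once this is known, $m(\pi)^2 \leq 1$ forces $m(\pi) \leq 1$, which is the assertion.

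To obtain $m(\pi) = m(\hat\pi)$, recall from Theorem \ref{Raghuram} that $\hat\pi \cong \pi^{\sigma}$, where $\pi^{\sigma}(g) = \pi(\sigma(g))$ and $\sigma(g) = J\,({}^t\bar g^{-1})\,J$. The key point is that $\sigma$ restricts to the identity map on $\Sp_n(D)$. Indeed, if $g \in \Sp_n(D)$ then $gJ\,{}^t\bar g = J$, whence ${}^t\bar g = J g^{-1} J$ (using $J^2 = I$), so ${}^t\bar g^{-1} = J g J$ and therefore $\sigma(g) = J\,(J g J)\,J = g$. Equivalently, in the notation $A^J = J\,{}^t\bar A\,J$ of Proposition \ref{Prop231}, one has $A^J = A^{-1}$ for $A \in \Sp_n(D)$, so $\sigma(A) = (A^J)^{-1} = A$; this simply reflects that $\Sp_n(D)$ is the fixed-point set of the anti-involution $A \mapsto A^J$.

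Consequently, on the common underlying space of $\pi$ and $\pi^{\sigma}$, the two actions of the subgroup $\Sp_n(D)$ literally coincide. Hence a linear functional is $\Sp_n(D)$-invariant for $\pi$ if and only if it is $\Sp_n(D)$-invariant for $\pi^{\sigma}$, giving a canonical identification
\[
\mathrm{Hom}_{\Sp_n(D)}(\pi,\C) \;=\; \mathrm{Hom}_{\Sp_n(D)}(\pi^{\sigma},\C) \;\cong\; \mathrm{Hom}_{\Sp_n(D)}(\hat\pi,\C),
\]
so $m(\pi) = m(\hat\pi)$ and the theorem follows.

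I expect no real obstacle at this stage: the substantive work has already been carried out in Proposition \ref{Prop231} (and hence in the corollary, via the Gelfand--Kazhdan formalism of Lemma \ref{Prasad}) and in the identification $\hat\pi \cong \pi^{\sigma}$ of Theorem \ref{Raghuram}. The only item requiring care is the compatibility check $\sigma|_{\Sp_n(D)} = \mathrm{id}$, which is the short matrix computation indicated above; everything else is formal bookkeeping.
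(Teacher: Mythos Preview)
Your argument is correct and follows essentially the same route as the paper: both reduce the inequality to showing $m(\pi)=m(\hat\pi)$ by identifying $\hat\pi$ with $\pi^{\sigma}$ via Theorem~\ref{Raghuram} and observing that the $\Sp_n(D)$-invariant functionals for $\pi$ and $\pi^{\sigma}$ coincide (you check $\sigma|_{\Sp_n(D)}=\mathrm{id}$, while the paper uses the equivalent fact that $\sigma$ permutes $\Sp_n(D)$). One small wording slip: $\Sp_n(D)$ is the fixed-point set of the \emph{involution} $A\mapsto (A^J)^{-1}=\sigma(A)$, not of the anti-involution $A\mapsto A^J$ itself; your computation $A^J=A^{-1}$ for $A\in\Sp_n(D)$ is exactly this statement, so the proof is unaffected.
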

\begin{proof}
Let $(\pi_1, V)$ be 
the representation defined by $\pi_1(g)=\pi(^Jg^{-1})$.\\ Let $\lambda\in {\rm Hom}_{\Sp_n(D)}\left(\pi_1,\mathbb{C}\right).$ 
Then $\lambda(\pi_1(g)v)=\lambda (v)$ which gives $\lambda(\pi (^Jg^{-1})v)=\lambda (v)$.
Since $H$ is invariant under $g\rightarrow {}^Jg^{-1}$, 
$\lambda(\pi(g)v)=\lambda(v)$ for $g\in H$, so $\lambda\in {\rm Hom}_{\Sp_n(D)}(V,\mathbb{C})$.
The other inclusion follows similarly. Therefore, $\dim {\rm Hom}_{\Sp_n(D)}\left(\pi,\mathbb{C}\right)= \dim{\rm Hom}_{\Sp_n(D)}\left(\pi_1,\mathbb{C}\right)$. Now the result
follows from Theorem \ref{Raghuram} and above corollary.
\end{proof}

\section{Local theory}

The aim of this section is to analyze the principal series representations of
$\GL_2(D)$ which have a symplectic model. This can be easily done by the usual Mackey theory which is what we do here.

\subsection{Orbits and Mackey theory}
Let $H$ and $P$ be  two closed subgroups of a group $G$ and let $(\sigma,W)$ be a smooth representation of $P.$
 We assume that $G$ and $H$
are unimodular. Also, assume that $H{\setminus G/P}$ has only two elements, that is,
the  natural action of $H$ on $G/P$ has two orbits, which we will call $O_1$ and $O_2$.

Assume without loss of generality that the orbit $O_1$ of
$H$ through $eP$ is closed  and the orbit $O_2$ is open.
  Let $H_1$ be the stabilizer in $H$ of the element $eP$ in $G/P$, then $H_1=P\cap H.$  Choose an element $x$ in $G$ 
such that the coset $xP$ lies in $O_2.$ 
Then $H_2=\textnormal{Stab}_H(xP)=H \cap xPx^{-1}$. 
Therefore,  $O_1\simeq {H/H_1}$ and $O_2\simeq {H/H_2}$.
Using Mackey theory
we obtain an exact sequence of $H$-representations:
\begin{equation*}
0\rightarrow \mathrm{ind}_{H_{2}}^{H}\sigma_{2}\rightarrow \mathrm{Ind}_{P}^{G}\sigma|_{H}
\rightarrow \mathrm{Ind}_{H_{1}}^{H}\sigma_{1}\rightarrow 0,
\end{equation*}
where 
\[
\sigma_1(h)=\left(\delta_P/\delta_{H_1}\right)^{1/2}\sigma (h)  ~~\textnormal{for}~~ h\in{H_1},
\]
and
 \[
 \sigma_2(h)=\left(\delta_P/\delta_{H_2}\right)^{1/2}\sigma (h)~~  \textnormal{for} ~~h\in{H_2}.
 \]
 The question of the existence of an $H$-invariant linear form for $\pi$
can thus be addressed by studying $H$-invariant linear forms for representations of $H$ induced from its subgroups

Now we apply the Mackey theory discussed above to the our situation for $G=\GL_2(D),H=\Sp_2(D)$ and a parabolic subgroup
$P$ of $\GL_2(D)$.

Let $V$ be a 2-dimensional Hermitian right $D$-vector space with a basis $\{e_1, e_2\}$ of $V$ with 
$(e_1, e_1)=(e_2,e_2)=0$ and $(e_1, e_2)=1.$
Let $X$ be the set of all  1-dimensional $D$-subspaces of $V.$
The group $G=\GL_D(V)$  acts naturally on $V$, and induces a transitive action on $X,$
  realizing $X$ as homogeneous space for $G.$
 Then the stabilizer of a line $W$ in $G$ is a parabolic subgroup $P$ of $G$, with $X\simeq G/P$.
Using the above basis, $\GL_D(V)$ can be identified with $\GL_2(D).$  For $W=\langle e_1\rangle$,  $P$ is the parabolic subgroup consisting  upper 
triangular matrices in $\GL_2(D)$. As  we have a Hermitian structure on $V$, $H=\Sp_D(V)\subset \GL_D(V)$.

We want to understand the space $H{\setminus G/P}.$ This space can be seen as the orbit space 
  of $H$ on the flag variety $X$. This action has two orbits. One of them, say $O_{1}$, consists of all $1$-dimensional isotropic
  subspaces of $V$ and the other, say $O_{2}$ consists of all $1$-dimensional anisotropic subspaces of $V$. Here, the one dimensional
  subspace generated by a vector $v$ is called isotropic if  $(v, v)=0$; otherwise, it is called anisotropic.
  The fact that $\Sp_D(V)$ acts transitively on $O_1$ and $O_2$ follows from Witt's theorem \cite[~page 6,~\S 9]{MVW}, together with
  the well known theorem that the reduced norm $N_{D/k}:D^{\times} \rightarrow k^{\times}$ is surjective, and as a result if a vector
  $v\in V$ is anisotropic, we can assume that in the line $\langle v \rangle=\langle v\cdot D\rangle$ generated by $v$, there exists a vector $v'$ such that
  $(v',v')=1$.
  
  It is easily seen that the stabilizer of the line $\langle e_1\rangle$ in $\Sp_D(V)$ is 
  \begin{equation*}
 P_H=\left\{\begin{pmatrix}
  a & b\\
 0 & {\bar a}^{-1}\\
 \end{pmatrix} \mid a\in D^{\times},  b\in D, a\bar b +b\bar a=0\right\}.
 \end{equation*}
 \noindent Now we consider the line $\langle e_1+e_2\rangle$  inside $O_2$. To calculate the stabilizer of this line in $\Sp_D(V),$
 note that if an isometry of $V$ stabilizes the line generated by $e_1+e_2$, it also stabilizes its orthogonal complement which is
  the line generated by $e_1-e_2$. Hence, the stabilizer of the line $\langle e_1+e_2 \rangle$ in $\Sp_D(V)$ 
 stabilizes the orthogonal decomposition of $V$ as
 \[
 V=\langle e_1+e_2 \rangle \oplus \langle e_1-e_2\rangle, 
 \]
 and also acts on the vectors $\langle e_1+e_2\rangle$ and $\langle e_1-e_2\rangle$ by scalars. Thus the stabilizer in
 $\Sp_D(V)$ of the line
 $\langle e_1+e_2\rangle$ is $ D^1\times D^1$ sitting in a natural way in the Levi $D^{\times} \times D^{\times}$ of the parabolic $P$ in $\GL_2(D).$
Here $D^1$ is the subgroup of $D^\times$ consisting of reduced norm $1$ elements in $D^{\times}$.

Now consider the principal series representation $\pi=\sigma_1\times\sigma_2:={\rm Ind}_P^{\GL_2(D)}\sigma$ of $\GL_2(D)$, where
 $\sigma=\sigma_1\otimes\sigma_2$
is an irreducible representation of $D^{\times}\otimes D^{\times}$.
We analyze the restriction of $\pi$ to $\Sp_2(D)$.
By Mackey theory, we  get the following exact sequence of $\Sp_{2}(D)$ representations 
\begin{align} \label{4.1}
 0 \rightarrow & \mathrm{ind}_{D^{1} \times D^{1}}^{\Sp_2(D)}[\left(\sigma_1\otimes\sigma_2\right)|_{D^{1}\times D^{1}}]\rightarrow 
\pi \rightarrow
&\mathrm{Ind}_{P_H}^{\Sp_2(D)} \nu^{1/2} [\left(\sigma_1\otimes\sigma_2\right)|_{M_H}] \rightarrow 0.
\end{align}
 Here $\nu$ is the character on $P_H$ given by
 \begin{equation*}
 \nu\left[\begin{pmatrix}
  a & b\\
 0 & {\bar a}^{-1}
 \end{pmatrix} \right]= {\left\vert N_{D/k}(a)\right\vert}
 \end{equation*}
 
Suppose $\pi$ has a nonzero $\Sp_2(D)$-invariant linear form. Then one of the representations in the above exact 
sequence,
\begin{equation}\label{exact}
 \mathrm{ind}_{D^{1}\times D^{1}}^{\Sp_2(D)}[\left(\sigma_1\otimes\sigma_2\right)|_{D^{1}\times D^{1}}]
\text{ or } \mathrm{Ind}_{P_H}^{\Sp_2(D)}\nu^{1/2}[\left(\sigma_1\otimes\sigma_2\right)|_{M_H}],
\end{equation}
must have an $\Sp_2(D)$-invariant form.
First, consider the case when 
\begin{equation*}
 \textnormal{Hom}_{\Sp_2(D)}(\mathrm{Ind}_{P_H}^{\Sp_2(D)} \nu^{1/2} [\left(\sigma_1\otimes\sigma_2\right)|_{M_H}], \mathbb{C}) \neq 0.
\end{equation*}
Since $H/P_H$ is compact, by Frobenius reciprocity, this is equivalent to
\begin{equation*}  
\textnormal{Hom}_{M_H}(
\nu^{1/2}\left(\sigma_1\otimes\sigma_2\right),\nu^{3/2})\neq 0.
\end{equation*}
Since $M_H=\{\left(d,\bar{d}^{-1}\right)|d\in D^{\times}\}\simeq \Delta{\left(D^{\times}\times D^{\times}\right)}$, we have
\begin{equation*}
\textnormal{Hom}_{D^{\times}}(
\left(\sigma_1\otimes\hat{\sigma_2}\right),\nu)\neq 0,
 \end{equation*}
and hence 
\begin{equation}\label{closed}
\textnormal{Hom}_{D^{\times}}\left(
\sigma_1,\sigma_2\otimes\nu\right)\neq 0,
 \end{equation}
 or $$\sigma_1 \simeq \nu\otimes\sigma_2.$$\\
Now assume  that 
\begin{equation*}
 \textnormal{Hom}_{\Sp_2(D)}(\mathrm{ind}_{D^{1}\times D^{1}}^{\Sp_2(D)}
[\left(\sigma_1\otimes\sigma_2\right)|_{D^{1}\times D^{1}}],\mathbb{C})\neq 0.
\end{equation*}
 Then by Frobenius reciprocity, this is equivalent to
\begin{equation}\label{open orbit}
\textnormal{Hom}_{D^{1}\times D^{1}}(
\left(\sigma_1\otimes\sigma_2),\mathbb{C}\right)\neq 0.
 \end{equation} 
 
\begin{lemma}
 Let $(\sigma,V)$ be a finite dimensional irreducible representation of $D^{\times}$ with $\mathrm{Hom}_{D^1}\left(V, \mathbb C\right)\neq 0.$ 
Then $\sigma$ is one dimensional.
\end{lemma}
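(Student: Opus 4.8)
The plan is to deduce from the hypothesis that $\sigma$ is trivial on the subgroup $D^1$, after which the assertion reduces to the elementary fact that a finite-dimensional irreducible complex representation of an abelian group is one-dimensional.

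First I would reinterpret a nonzero element of $\mathrm{Hom}_{D^1}(V,\C)$: such a functional is precisely a nonzero $D^1$-fixed vector in the contragredient representation $(\hat\sigma,\hat V)$, so that $\hat V^{\,D^1}\neq 0$. The key point is that $D^1=\ker\bigl(N_{D/k}\colon D^{\times}\to k^{\times}\bigr)$ is a normal subgroup of $D^{\times}$; hence for $g\in D^{\times}$ and $h\in D^1$ one has $g^{-1}hg\in D^1$, and it follows that $\hat V^{\,D^1}$ is a nonzero $D^{\times}$-stable subspace of $\hat V$. Since the contragredient of a finite-dimensional irreducible representation is again irreducible, this forces $\hat V^{\,D^1}=\hat V$; that is, $D^1$ acts trivially on $\hat V$, and therefore also on $V$.

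It then remains to observe that $\sigma$ factors through $D^{\times}/D^1$, which by surjectivity of the reduced norm (already invoked in this section) is isomorphic to the abelian group $k^{\times}$. Schur's lemma now finishes the argument: each operator $\sigma(g)$ commutes with the whole image of $\sigma$ and is therefore a scalar, so every line in $V$ is a subrepresentation, whence $\dim V=1$.

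I do not anticipate a genuine obstacle here. The only point that needs a little care is locating the $D^1$-invariant vector in the correct space --- equivalently, using that $D^1$ is compact, so that the existence of a $D^1$-invariant functional on $V$ is the same as the existence of a $D^1$-invariant vector --- together with the observation that $D^1$ is normal in $D^{\times}$ with abelian quotient. (One could alternatively invoke that $D^1$ coincides with the commutator subgroup of $D^{\times}$ over a local field, but this is not needed: the kernel-of-a-homomorphism-to-an-abelian-group argument suffices.)
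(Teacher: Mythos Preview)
Your argument is correct and follows the same skeleton as the paper's proof: normality of $D^1$ in $D^{\times}$ plus irreducibility forces $D^1$ to act trivially, and then the abelian quotient gives $\dim V=1$. Two small differences are worth noting. First, the paper passes directly to $V^{D^1}\neq 0$, implicitly using compactness of $D^1$ (complete reducibility) to turn the invariant functional into an invariant vector, whereas you work in the contragredient $\hat V$, which is where the hypothesis literally lives; your route is cleaner here. Second, to see that $D^{\times}/D^1$ is abelian the paper invokes Matsushima's theorem that $D^1$ equals the commutator subgroup of $D^{\times}$, while you simply observe that $D^1=\ker N_{D/k}$ and use surjectivity of the reduced norm to identify the quotient with $k^{\times}$; your observation is more elementary and entirely sufficient.
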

\begin{proof}
  By a theorem due to Matsushima \cite{Naka}, $D^{1}$ is the commutator subgroup of $D^{\times}.$ Since $D^1$ is a normal subgroup of $D^{\times}$,
 $V^{D^{1}}\neq \{0\}$ is invariant under $D^{\times}$ and so by the irreducibility of $V$, $V=V^{D^{1}}.$ 
 Since $(\sigma, V)$ is an irreducible representation of $D^{\times}$, on which $D^1$ operates trivially, 
 $(\sigma, V)$ as a representation of $D^{\times}/D^{1}$
is also irreducible. Since $D^{\times}/D^{1}$ is abelian, $\sigma$ must be one dimensional.
\end{proof}
From the analysis above, we deduce that if the representation
$$\pi=\sigma_1 \times \sigma_2:= \mathrm{Ind}_P^{\GL_2(D)}\left(\sigma_1 \otimes \sigma_2 \right)$$
has an $\Sp_2(D)$-invariant linear form, then either
\begin{enumerate}
 \item $\sigma_1\simeq \sigma_2 \otimes \nu$, or
 \item both $\sigma_1$ and $\sigma_2$ are $1$-dimensional representations of $D^{\times}$, hence are of the form 
 $\sigma_1 = \chi_1\circ N_{D/k}$, $\sigma_2= \chi_2 \circ N_{D/k}$ for characters $\chi_i :k^{\times} \rightarrow \C^{\times}.$
\end{enumerate}
Further, we note that the closed orbit for the action of $\Sp_2(D)$ on $P\setminus \GL_2(D)$ contributes to a $\Sp_2(D)$-invariant
 form in the first case above, whereas it is the open orbit which contributes to a $\Sp_2(D)$-invariant linear form in the second case.
 Since the part of the representation supported on the closed orbit arises as a quotient of $\pi$, we find that in the first case 
 $\pi$ must have a $\Sp_2(D)$-invariant linear form.
 
If dim$\left(\sigma_1 \otimes\sigma_2\right)>1$, then the open orbit cannot contribute to an $\Sp_2(D)$-invariant linear form,
  and therefore we conclude that if dim$\left(\sigma_1 \otimes\sigma_2\right)>1$, then $\pi=\sigma_1 \times \sigma_2$
  has an $\Sp_2(D)$-invariant form if and only if $\sigma_1= \sigma_2 \otimes \nu$. 
  Observe that if $\pi$ has an $\Sp_2(D)$-invariant linear form, and is irreducible, then by an analogue of a 
  theorem of Gelfand-Kazhdan \cite{Ge-Ka1}
    due to Raghuram \cite{Raghu}, $\hat\pi$ too has an $\Sp_2(D)$-invariant linear form. However, if
   $\pi=\sigma_1 \times \sigma_2$, and $\pi$ is irreducible, then $\hat\pi = \hat\sigma_1 \times \hat\sigma_2,$ and if 
   $\sigma_1\simeq \sigma_2 \otimes \nu$, we get $\hat \sigma_1\simeq \hat \sigma_2 \otimes \nu^{-1}$. This means by our analysis above that the 
   representation $\hat \sigma_1 \times \hat \sigma_2$ of $\GL_2(D)$ does not carry an $\Sp_2(D)$-invariant linear form.
    Therefore, we conclude that if $\sigma_1\simeq \sigma_2 \otimes \nu$, then  $\pi=\sigma_1 \times \sigma_2$ must be reducible, which is one part 
   of the following theorem of Tadic \cite{Tadic}.
   \begin{theorem}\label{Tadic} (Tadic)
 Let $\sigma_1$ and $\sigma_2 $ be two irreducible  representations of $D^{\times}.$ 
Let $\pi=\mathrm{Ind}_P^{\GL_2(D)}\left(\sigma_{1}\otimes\sigma_{2}\right)$ be the corresponding principal series
 representation of $\GL_2(D).$  Assume  dim$\left(\sigma_1 \otimes\sigma_2\right)>1$. Then $\pi$ is reducible if and only if 
$\sigma_1\simeq \sigma_2\otimes {\nu}^{\pm 1}.$ 
If $\pi$ is reducible then it has length two. Assuming $\sigma_1=\sigma_2\otimes \nu$, we have the following non-split 
exact sequence:
$$0\rightarrow  {\rm St}(\pi)\rightarrow \pi\rightarrow {\rm Sp}(\pi)\rightarrow 0,$$
where ${\rm St}(\pi)$ is a discrete series representation called a generalized Steinberg representation of $\GL_2(D)$
and ${\rm Sp}(\pi)$ is called 
a Speh representation of $\GL_2(D)$.\\
If ${\rm dim}(\sigma_1\otimes\sigma_2)=1$, then $\pi= \chi_1\times \chi_2 $ is reducible if and only if $\sigma_1\simeq {\sigma_2}\otimes {\nu}^{\pm2}.$
If $\sigma_1=\sigma_2\otimes\nu^2$, $\pi$ has a one dimensional quotient, and the submodule is a twist of the Steinberg representation
of $\GL_2(D)$.
\end{theorem}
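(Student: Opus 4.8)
The plan is to treat the reducibility of $\pi=\sigma_1\times\sigma_2$ as a question about a standard intertwining operator, controlled by a rank-one Plancherel measure, and then to extract the submodule structure from Jacquet modules and the Langlands classification for $\GL_2(D)$. First I would bound the length. Let $P$ be the standard parabolic of $\GL_2(D)$ with Levi $M=D^{\times}\times D^{\times}$ and let $r_P$ denote the normalized Jacquet functor. Since $D^{\times}$ has no proper parabolic subgroup, the geometric lemma of Bernstein--Zelevinsky, in Tadic's form for division algebras, gives $r_P(\pi)=(\sigma_1\otimes\sigma_2)\oplus(\sigma_2\otimes\sigma_1)$ in the Grothendieck group of $M$. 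Exactness of $r_P$ together with Frobenius reciprocity then shows that every Jordan--H\"older constituent of $\pi$ has nonzero Jacquet module, that two distinct constituents cannot share an irreducible subquotient of $r_P(\pi)$, and hence that $\pi$ has at most two constituents, exactly two when it is reducible and $\sigma_1\not\simeq\sigma_2$. When $\sigma_1\simeq\sigma_2$ one checks directly that $\pi$ is irreducible, using that $\sigma_1\simeq\sigma_1\nu^{s}$ can hold only for unitary $\nu^{s}$; this already settles irreducibility whenever the condition $\sigma_1\simeq\sigma_2\otimes\nu^{\pm j}$ fails, with $j$ as found below.

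The heart of the proof is the determination of the reducibility point. I would consider the family $\pi_s:=\sigma_1\times\sigma_2\nu^{s}$ and the standard intertwining operator $\mathcal M(w,s)\colon\pi_s\to\sigma_2\nu^{s}\times\sigma_1$: it is meromorphic in $s$, an isomorphism outside a discrete set, and $\pi_s$ is irreducible exactly when $\mathcal M(w,s)$ is invertible; by Harish-Chandra's theory the locus where it degenerates is controlled by the rank-one Plancherel measure $\mu(s)=\mu(\sigma_1\otimes\sigma_2\nu^{s})$ attached to the action of $M$ on the unipotent radical $\mathfrak n\cong D$. I would compute the pole of $\mu$ by means of the local Jacquet--Langlands correspondence of Deligne--Kazhdan--Vign\'eras, which attaches to an irreducible $\sigma$ of $D^{\times}$ a discrete series $\JL(\sigma)$ of $\GL_2(k)$: one has $\JL(\chi\circ N_{D/k})=\chi\cdot{\rm St}_{\GL_2(k)}$, while $\JL(\sigma)$ is supercuspidal when $\dim\sigma>1$. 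The analytic factor of $\mu(s)$ is then built from the Rankin--Selberg $L$-factor $L(s,\JL(\sigma_1)\times\JL(\sigma_2)^{\vee})$. If $\dim\sigma_1,\dim\sigma_2>1$ this $L$-factor is trivial, $\mu$ has its only positive degeneration at $s=1$, and $\pi$ is reducible exactly when $\sigma_1\simeq\sigma_2\nu^{\pm1}$. If $\sigma_i=\chi_i\circ N_{D/k}$, the factor $L(s,{\rm St}(\chi_1)\times{\rm St}(\chi_2)^{\vee})$ carries an additional \emph{shifted} Euler factor coming from the special (nilpotent) part of the Steinberg parameter; tracking its contribution to $\mu$ shows that the only positive degeneration is pushed out to $s=2$, so $\pi$ is reducible exactly when $\sigma_1\simeq\sigma_2\nu^{\pm2}$. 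Equivalently, on the $\GL_4(k)$ side the two length-two cuspidal supports of ${\rm St}(\chi_1)$ and ${\rm St}(\chi_2)$ become linked in a way that survives the transfer only after a twist by $\nu^{2}$. Alternatively this computation can be done entirely inside $\GL_n(D)$, by Tadic's original Jacquet-module analysis, which evaluates the same Plancherel measure without passing to $\GL_{2n}(k)$.

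It remains to identify the constituents in the reducible cases. From the length bound the two constituents have Jacquet modules $\sigma_1\otimes\sigma_2$ and $\sigma_2\otimes\sigma_1$. Writing the dominant representation as the first factor --- $\sigma_1=\sigma_2\nu$ if $\dim\sigma_i>1$, respectively $\sigma_1=\sigma_2\nu^{2}$ if $\dim\sigma_i=1$ --- Casselman's square-integrability criterion, applied to the exponent $\sigma_2\nu\otimes\sigma_2$ (respectively to the exponent of the corresponding normalized standard module), shows that the \emph{submodule} of $\pi$ is essentially square-integrable: it is the generalized Steinberg ${\rm St}(\pi)$ of $\GL_2(D)$, respectively a twist of the Steinberg representation of $\GL_2(D)$. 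The \emph{quotient} is then the associated Langlands quotient, which is the Speh representation ${\rm Sp}(\pi)$ when $\dim\sigma_i>1$ and is a one-dimensional character $\chi\circ{\rm Nrd}$ of $\GL_2(D)$ --- the Zelevinsky dual of the generalized Steinberg --- when $\dim\sigma_i=1$. Non-splitness of the exact sequence is automatic: $\pi$ then has a unique irreducible submodule and a unique irreducible quotient, and these are not isomorphic.

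The hard part will be the reducibility-point computation: establishing that the reducibility exponent \emph{doubles}, from $\nu^{\pm1}$ to $\nu^{\pm2}$, precisely when $\sigma_1$ and $\sigma_2$ become one-dimensional --- and in particular that $\chi_1\circ N_{D/k}\times\chi_2\circ N_{D/k}$ remains \emph{irreducible} at $\chi_1/\chi_2=|\cdot|_k^{\pm1}$, even though the naively transferred representation of $\GL_4(k)$ is reducible there. Dealing with this forces one either to carry out a genuine Plancherel-measure, equivalently $R$-group, computation for the non-split group $\GL_2(D)$, or to invoke the precise compatibility of the Jacquet--Langlands correspondence with parabolic induction, keeping track of which constituents are $d$-compatible and which are annihilated by the transfer. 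Once the reducibility locus and the two Jacquet modules are in hand, the length count and the identification of the constituents are routine.
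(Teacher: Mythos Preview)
The paper does not actually prove this theorem: it is stated with attribution to Tadi\'c and a citation to \cite{Tadic}. The only argument the paper supplies is for a single implication in the case $\dim(\sigma_1\otimes\sigma_2)>1$, namely that $\sigma_1\simeq\sigma_2\otimes\nu$ forces reducibility. That argument is indirect and rather different from yours: having shown via Mackey theory that $\sigma_1\times\sigma_2$ carries an $\Sp_2(D)$-invariant linear form when $\sigma_1\simeq\sigma_2\otimes\nu$, the paper observes that if $\pi$ were irreducible then, by the Gelfand--Kazhdan/Raghuram result $\hat\pi\simeq\pi^{\sigma}$, the contragredient $\hat\sigma_1\times\hat\sigma_2$ would also be $\Sp_2(D)$-distinguished; but $\hat\sigma_1\simeq\hat\sigma_2\otimes\nu^{-1}$, and the same Mackey analysis shows this principal series is \emph{not} distinguished, a contradiction. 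Everything else --- the converse, the length-two statement, the identification of ${\rm St}(\pi)$ and ${\rm Sp}(\pi)$, and the entire $\dim=1$ case with the shifted exponent $\nu^{\pm2}$ --- is simply quoted from Tadi\'c.

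Your sketch, by contrast, is a direct attack on the full statement along standard lines: bound the length by the geometric lemma, locate the reducibility point via the rank-one Plancherel measure (or equivalently the standard intertwining operator), and read off the submodule/quotient from Casselman's criterion and the Langlands classification. This is essentially Tadi\'c's own route, or a close variant using the Jacquet--Langlands transfer to import the $L$-factor computation from $\GL_2(k)$. The outline is sound and the key phenomenon --- that the reducibility exponent jumps to $\pm2$ exactly when both $\sigma_i$ are one-dimensional --- is correctly isolated. Two small points: your length argument should note explicitly that no constituent of $\pi$ can be supercuspidal (its cuspidal support lies in $M$), which is what forces every constituent to have nonzero Jacquet module; and the irreducibility at $\sigma_1\simeq\sigma_2$ deserves a line, since the Jacquet module then has a repeated factor and the ``distinct constituents have distinct Jacquet pieces'' argument needs the unitary-induction irreducibility input you allude to. The Plancherel/$\mu$-function computation you flag as the hard part is indeed where the real work lies, and you are right that it requires either a genuine calculation on $\GL_2(D)$ or careful bookkeeping of what survives the $|{\bf LJ}|$ transfer.
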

 In the exact sequence of $\GL_2(D)$-modules
 $$0\rightarrow  {\rm Sp}(\sigma_1)\rightarrow {\rm Ind}_P^{\GL_2(D)}(\sigma_1 \nu^{-1/2}
 \otimes \sigma_1 \nu^{1/2})\rightarrow  {\rm St}(\sigma_1)\rightarrow 0,$$
and assuming that ${\rm dim}(\sigma_1)>1$, we know by our previous analysis that 
${\rm Ind}_P^{\GL_2(D)}(\sigma_1 \nu^{-1/2} \otimes \sigma_1 \nu^{1/2})$
 does not have an $\Sp_2(D)$-invariant linear form. Therefore, from the exact sequence above, it is clear that ${\rm St}(\sigma_1)$ also 
 does not have an $\Sp_2(D)$-invariant linear form.\\
 \indent On the other hand, we know that ${\rm Ind}_P^{\GL_2(D)}(\sigma_1 \nu^{1/2} \otimes \sigma_1 \nu^{-1/2})$ does have an $\Sp_2(D)$-invariant
 linear form, and  ${\rm Ind}_P^{\GL_2(D)}(\sigma_1 \nu^{1/2} \otimes \sigma_1 \nu^{-1/2})$  fits in the following exact
 sequence:
  $$0\rightarrow  {\rm St}(\sigma_1)\rightarrow {\rm Ind}_P^{\GL_2(D)}(\sigma_1 \nu^{1/2} \otimes \sigma_1
  \nu^{-1/2})\rightarrow  {\rm Sp}(\sigma_1)\rightarrow 0.$$
  Since we have already concluded that ${\rm St}(\sigma_1)$ does not have an $\Sp_2(D)$-invariant linear form and since 
  ${\rm Ind}_P^{\GL_2(D)}(\sigma_1 \nu^{1/2} \otimes \sigma_1
  \nu^{-1/2})$ has a $\Sp_2(D)$-invariant linear form, we conclude that ${\rm Sp}(\sigma_1)$ must have an $\Sp_2(D)$-invariant linear form.\\
 \indent Having completed the analysis of $\Sp_2(D)$-invariant linear forms on representations $\pi=\sigma_1 \times \sigma_2$ with 
 dim$\left(\sigma_1 \otimes\sigma_2\right)>1$, we turn our attention to the case when $\sigma_1$ and $\sigma_2$ are both one dimensional
 representations of $D^{\times}$.
 In this case, the part of $\pi$ supported on the open orbit, which is a submodule of $\pi$, contributes to an $\Sp_2(D)$-invariant linear
 form. Suppose that $\sigma_1 \neq \sigma_2\otimes\nu$, as otherwise there is an $\Sp_2(D)$-invariant linear form arising from the closed
 orbit.
 
 Since the part of $\pi$ supported on the open orbit, that is, 
 ${\rm ind}_{D^1 \times D^1}^{\Sp_2(D)}\left(\sigma_1 \otimes \sigma_2 \right)$, 
 is a submodule of $\pi$, it is not obvious that an $\Sp_2(D)$-invariant linear form on 
 ${\rm ind}_{D^1 \times D^1}^{\Sp_2(D)}\left(\sigma_1 \otimes \sigma_2 \right)$
 will extend to an $\Sp_2(D)$-invariant linear form on $\pi$. For this, as in \cite{DP90}, we need to ensure that
 $$ {\rm Ext}_{\Sp_2(D)}^{1}[{\rm Ind}_{P_H}^{\Sp_2(D)}\nu^{1/2}\left(\sigma_1 \otimes \sigma_2 \right)\mid_{M_H},\C]=0.$$
For proving this, we recall the notion of the Euler-Poincar\'e pairing between two finite length representations of any reductive group $G$,
 defined by 
 \[
 {\rm EP}_G[\pi_1, \pi_2]= \sum_{i=0}^{r(G)} (-1)^{i}{\rm dim}~{Ext}_G^{i}[\pi_1, \pi_2],
 \]
 where $r(G)$ is the split rank of $G$ which for $\Sp_2(D)$ is $1$. Therefore, for $\Sp_2(D),$
 \[
 {\rm EP}_{\Sp_2(D)}[\pi_1, \pi_2]= {\rm dim~Hom}_{\Sp_2(D)}[\pi_1,\pi_2]-{\rm dim}~{Ext}_{\Sp_2(D)}^{1}[\pi_1, \pi_2].
 \]
 By a well known theorem, ${\rm EP}_G[\pi_1, \pi_2]=0$ if $\pi_1$ is a (not necessarily irreducible) principal series representation
 of $G.$ Therefore, we find that
 \[
 {\rm EP}_{\Sp_2(D)}[{\rm Ind}_{P_H}^{\Sp_2(D)}\nu^{1/2}\left(\sigma_1 \otimes \sigma_2 \right),\C)]=0,
 \]
and so
 \[
 \dim {\rm Hom}_{\Sp_2(D)}[{\rm Ind}_{P_H}^{\Sp_2(D)}\nu^{1/2}\left(\sigma_1 \otimes \sigma_2 \right),\C)]=
 \dim {\rm Ext}_{\Sp_2(D)}^{1}[{\rm Ind}_{P_H}^{\Sp_2(D)}\nu^{1/2}(\sigma_1 \otimes \sigma_2), \C].
 \]
 Since we are assuming that $\sigma_1\neq\sigma_2\otimes\nu$,
 $$\dim {\rm Hom}_{\Sp_2(D)}[{\rm Ind}_{P_H}^{\Sp_2(D)}\nu^{1/2}\left(\sigma_1 \otimes \sigma_2 \right),\C)]=0.$$
 Therefore we conclude that 
 $${\rm Ext}_{\Sp_2(D)}^{1}[{\rm Ind}_{P_H}^{\Sp_2(D)}\nu^{1/2}(\sigma_1 \otimes \sigma_2), \C]=0.$$
 As a result, we now have proved that if $\sigma_1$ and $\sigma_2$ are one dimensional representations of $D^{\times}$, with 
 $\sigma_1\neq\sigma_2\otimes\nu$, then $\pi=\sigma_1\times\sigma_2$ has a $\Sp_2(D)$-invariant linear
  form. 

We  have proved most of  the following theorem, which we will now complete.

\begin{theorem}\label{indrepthm}
 The only subquotients of a principal series representation 
   $\pi=\sigma_1 \times \sigma_2:= \mathrm{Ind}_P^{\GL_2(D)}\left(\sigma_1 \otimes \sigma_2 \right)$ of $\GL_2(D)$ which have a $\Sp_2(D)$-
   invariant linear form are the following.
   \begin{enumerate}
    \item  When ${\rm dim}\left(\sigma_1 \otimes \sigma_2 \right)>1$, the unique irreducible quotient of 
the principal series representation    ${\rm Ind}_P^{\GL_2(D)}(\sigma \nu^{1/2} \otimes \sigma \nu^{-1/2})$ denoted by ${\rm Sp}(\sigma)$. 
    \item  When ${\rm dim}(\sigma_1)={\rm dim}(\sigma_2)=1,$ any of the irreducible principal series representations
    ${\rm Ind}_P^{\GL_2(D)}(\sigma_1  \otimes \sigma_2 ),$  whenever $\sigma_1\neq\sigma_2\otimes\nu^{\pm 2}$.
    \item When ${\rm dim}(\sigma_1)={\rm dim}(\sigma_2)=1,$ and 
    $\sigma_1=\sigma_2\otimes\nu^{2}$, the principal series representation
    ${\rm Ind}_P^{\GL_2(D)}(\sigma_1 \nu \otimes \sigma_1 \nu^{-1})$ fits in the following exact sequence:
    $$0\rightarrow  {\rm St}\otimes \chi \rightarrow {\rm Ind}_P^{\GL_2(D)}(\chi \nu\otimes \chi\nu^{-1}) \rightarrow \C_{\chi}\rightarrow 0,$$
   where $\C_{\chi}$ is the one dimensional representation of $\GL_2(D)$ on which $\GL_2(D)$ operates by the character $\chi\circ N_{D/k}$,
   $N_{D/k}$ is the reduced norm map and  ${\rm St}$ is the Steinberg representation of $\GL_2(D).$ 
 The only subquotient of ${\rm Ind}_P^{\GL_2(D)}(\chi \nu\otimes \chi\nu^{-1})$ having $\Sp_2(D)$-invariant linear form is $\C_{\chi}.$
 \end{enumerate}
  \end{theorem}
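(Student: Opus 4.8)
The plan is to assemble the computations carried out in the preceding discussion, organise them by the reducibility pattern of Theorem~\ref{Tadic}, and then supply the single genuinely new input: that a twist of the Steinberg representation of $\GL_2(D)$ is not $\Sp_2(D)$-distinguished. Throughout I read ``subquotient'' as ``irreducible subquotient'' (composition factor), since a reducible principal series is trivially a subquotient of itself and, as we have seen, typically carries an $\Sp_2(D)$-invariant form; the list to be proved is a list of distinguished composition factors over all inducing data.

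For item (1), where $\dim(\sigma_1\otimes\sigma_2)>1$: if $\pi=\sigma_1\times\sigma_2$ is irreducible I would note that, by the analysis above, $\pi$ is $\Sp_2(D)$-distinguished only when $\sigma_1\simeq\sigma_2\otimes\nu$, which by Theorem~\ref{Tadic} forces reducibility, so an irreducible such $\pi$ contributes nothing; if $\pi$ is reducible, then $\sigma_1\simeq\sigma_2\otimes\nu^{\pm1}$, and putting $\sigma$ so that $\{\sigma_1,\sigma_2\}=\{\sigma\otimes\nu^{1/2},\sigma\otimes\nu^{-1/2}\}$, the composition factors of $\pi$ — which do not depend on the ordering of the inducing data — are ${\rm St}(\sigma)$ and ${\rm Sp}(\sigma)$, and we have already shown that ${\rm St}(\sigma)$ carries no $\Sp_2(D)$-invariant form while ${\rm Sp}(\sigma)$ does. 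For item (2), where $\sigma_1,\sigma_2$ are one-dimensional and $\sigma_1\not\simeq\sigma_2\otimes\nu^{\pm2}$: Theorem~\ref{Tadic} gives that $\pi$ is irreducible, and the preceding discussion (the closed orbit when $\sigma_1\simeq\sigma_2\otimes\nu$, the open orbit together with the Euler--Poincar\'e vanishing otherwise) shows $\pi$ is $\Sp_2(D)$-distinguished; being irreducible it is its only composition factor, so nothing further is needed.

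The real work is item (3): $\sigma_1,\sigma_2$ one-dimensional with $\sigma_1\simeq\sigma_2\otimes\nu^2$, so that (as in the exact sequence of the statement) I may write $\pi={\rm Ind}_P^{\GL_2(D)}(\chi\nu\otimes\chi\nu^{-1})$ with $\chi$ of the form $\chi\circ N_{D/k}$, and by Theorem~\ref{Tadic} there is an exact sequence $0\to{\rm St}\otimes\chi\to\pi\to\C_\chi\to0$ of $\GL_2(D)$-modules. Since $\Sp_2(D)$ is perfect (it is the group of $k$-points of a simply connected semisimple group), every character of $\GL_2(D)$ — in particular $\chi\circ N_{D/k}$ — restricts trivially to it, so $\C_\chi|_{\Sp_2(D)}=\C$ and $\C_\chi$ is $\Sp_2(D)$-distinguished. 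It remains to prove ${\rm Hom}_{\Sp_2(D)}({\rm St}\otimes\chi,\C)=0$. First I would observe, exactly as in the one-dimensional analysis above applied to the Mackey sequence~\eqref{4.1} (its outer terms have invariant-form spaces $0$ and $\C$, the first also with vanishing ${\rm Ext}^1$ because $\chi\nu\not\simeq(\chi\nu^{-1})\otimes\nu$), that $\dim {\rm Hom}_{\Sp_2(D)}(\pi,\C)=1$. Then, applying ${\rm Hom}_{\Sp_2(D)}(-,\C)$ to $0\to{\rm St}\otimes\chi\to\pi\to\C_\chi\to0$ and using $\C_\chi|_{\Sp_2(D)}=\C$, the map ${\rm Hom}(\C_\chi,\C)\to{\rm Hom}(\pi,\C)$ is an injection of one-dimensional spaces hence an isomorphism, so the restriction ${\rm Hom}(\pi,\C)\to{\rm Hom}({\rm St}\otimes\chi,\C)$ is zero and the long exact sequence embeds ${\rm Hom}_{\Sp_2(D)}({\rm St}\otimes\chi,\C)$ into ${\rm Ext}^1_{\Sp_2(D)}(\C_\chi,\C)={\rm Ext}^1_{\Sp_2(D)}(\C,\C)$.

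Everything thus reduces to ${\rm Ext}^1_{\Sp_2(D)}(\C,\C)=0$, which I expect to be the main obstacle and which I would establish with machinery already in the paper: apply the Euler--Poincar\'e identity and its additivity in short exact sequences to the degenerate principal series $0\to{\rm St}_{\Sp_2(D)}\to{\rm Ind}_{P_H}^{\Sp_2(D)}(\delta_{P_H}^{1/2})\to\C\to0$, where ${\rm St}_{\Sp_2(D)}$ is the Steinberg representation of $\Sp_2(D)$. Since a (not necessarily irreducible) principal series has vanishing Euler--Poincar\'e pairing against any finite-length representation, this yields $\dim {\rm Ext}^1_{\Sp_2(D)}({\rm St}_{\Sp_2(D)},\C)+\dim {\rm Ext}^1_{\Sp_2(D)}(\C,\C)=1$, and since the extension above is non-split one has ${\rm Ext}^1_{\Sp_2(D)}({\rm St}_{\Sp_2(D)},\C)\neq0$, forcing ${\rm Ext}^1_{\Sp_2(D)}(\C,\C)=0$; alternatively, $\Sp_2(D)$ is generated by its compact open subgroups and hence admits no nonzero locally constant homomorphism to $(\C,+)$, which gives the same vanishing. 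Combining the above, ${\rm Hom}_{\Sp_2(D)}({\rm St}\otimes\chi,\C)=0$, so $\C_\chi$ is the unique $\Sp_2(D)$-distinguished composition factor of $\pi$, completing item (3) and the theorem. Once the vanishing ${\rm Ext}^1_{\Sp_2(D)}(\C,\C)=0$ is in hand, all remaining steps are routine bookkeeping.
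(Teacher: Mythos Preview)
Your proof is correct and follows essentially the same route as the paper: reduce to showing ${\rm Hom}_{\Sp_2(D)}({\rm St}\otimes\chi,\C)=0$, apply ${\rm Hom}_{\Sp_2(D)}(-,\C)$ to the exact sequence $0\to{\rm St}\otimes\chi\to\pi\to\C_\chi\to0$, and combine $\dim{\rm Hom}_{\Sp_2(D)}(\pi,\C)=1$ (from the Mackey/Euler--Poincar\'e analysis already carried out) with ${\rm Ext}^1_{\Sp_2(D)}(\C,\C)=0$. The paper simply asserts this last vanishing as ``easy to see'' while you supply two arguments; note that in your Euler--Poincar\'e justification the non-splitness of $0\to{\rm St}_{\Sp_2(D)}\to{\rm Ind}\to\C\to0$ gives ${\rm Ext}^1_{\Sp_2(D)}(\C,{\rm St}_{\Sp_2(D)})\ne0$ rather than ${\rm Ext}^1_{\Sp_2(D)}({\rm St}_{\Sp_2(D)},\C)\ne0$ (one should use the dual sequence $0\to\C\to{\rm Ind}'\to{\rm St}_{\Sp_2(D)}\to0$), but your alternative argument via the absence of nonzero smooth homomorphisms $\Sp_2(D)\to(\C,+)$ is correct and suffices.
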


    \begin{proof}
    The only part of this theorem  not shown by the arguments above is that 
    $${\rm Hom}_{\Sp_2(D)}[{\rm St},\C]=0,$$
    where St is the Steinberg representation of $\GL_2(D)$,  an irreducible admissible representation of $\GL_2(D)$ fitting
 in the exact sequence
    $$0\rightarrow  {\rm St}\rightarrow {\rm Ind}_P^{\GL_2(D)}(\nu\otimes\nu^{-1})\rightarrow \C\rightarrow 0.$$
    Applying ${\rm Hom}_{\Sp_2(D)}[-,\C]$ to this exact sequence, we have:
\begin{align*}
  & 0\rightarrow  {\rm Hom}_{\Sp_2(D)}[\C,\C]\rightarrow {\rm Hom}_{\Sp_2(D)}[{\rm Ind}_P^{\GL_2(D)}(\nu\otimes\nu^{-1}),\C]
      \rightarrow {\rm Hom}_{\Sp_2(D)}[{\rm St},\C]\\
& \rightarrow {\rm Ext}_{\Sp_2(D)}^1[\C,\C] \rightarrow \cdots.
\end{align*}
     However, it is easy to see that  ${\rm Ext}_{\Sp_2(D)}^1[\C,\C]=0.$ Therefore, we have a short exact sequence
     $$0\rightarrow  \C\rightarrow {\rm Hom}_{\Sp_2(D)}[{\rm Ind}_P^{\GL_2(D)}(\nu\otimes\nu^{-1}),\C]
      \rightarrow {\rm Hom}_{\Sp_2(D)}[{\rm St},\C] \rightarrow 0.$$
     Hence, if ${\rm Hom}_{\Sp_2(D)}[{\rm St},\C] \neq 0$, ${\rm dim~Hom}_{\Sp_2(D)}[{\rm Ind}_P^{\GL_2(D)}(\nu\otimes\nu^{-1}),\C]\geqslant 2.$
     However, by the analysis with Mackey theory done above, we know that 
     ${\rm dim~Hom}_{\Sp_2(D)}[{\rm Ind}_P^{\GL_2(D)}(\nu\otimes\nu^{-1}),\C]=1$. Thus we have proved that
     \[{\rm Hom}_{\Sp_2(D)}[{\rm St},\C]=0. \qedhere\]
 \end{proof}
 \begin{remark}\label{remark1}
 As an important corollary of the  theorem above, note that the irreducible principal series representation
$\pi =\chi_1\times\chi_2:= {\rm Ind}_P^{\GL_2(D)}(\chi_1\otimes\chi_2)$ for characters $\chi_1$ and $\chi_2$ of $D^{\times}$
 which arise from the characters $\chi_1$ and $\chi_2$ of $k^{\times}$ via the reduced norm map of $D^{\times}$ to $k^{\times},$
 with $\chi_1 \chi_2^{-1} \neq \nu^{\pm 2}$, the representation $\pi$ is distinguished by $\Sp_2(D).$ However ${\rm JL}(\pi),$
  a representation of $\GL_4(k)$ is the irreducible principal series representation 
  ${\rm JL}(\pi)={\rm Ind}_P^{\GL_4(k)}(\chi_1 {\rm St}_2\otimes \chi_2 St_2)$ where ${\rm St}_2$ denote the Steinberg representation of
  $\GL_4(k).$ Since ${\rm JL}(\pi)$ is a generic representation of $\GL_4(k)$, it is not distinguished by $\Sp_4(k)$.
  Thus Jacquet-Langlands correspondence for representations of $\GL_2(D)$ to $\GL_4(k)$ does not always preserve distinction.
  \end{remark}

  \section {Global theory}
Let $F$ be a number field  and $ D$ be a quaternion  division algebra over $F.$
For each place $v$ of $F$, let $F_v$ be the completion of $F$ at $v.$ We can define $\GL_n(D)$ and $\Sp_n(D)$ as in the local case in the Section \ref{Notation}.

 Let $\mathbb A$ be the ring of ad\`eles of $F.$ Let  $D_v=D\otimes_FF_v$ and $D_\mathbb{A}=D\otimes_F\mathbb{A}.$  Then we can consider 
 topological groups $\GL_n(D_v)$, $\Sp_n(D_v)$, $\GL_{n}(D_\mathbb{A}),\Sp_{n}(D_\mathbb{A}),\\
\GL_{n}(\mathbb{A}_F), 
\Sp_{n}(\mathbb{A}_F).$
For an automorphic representation $\Pi$ of $\GL_{n}(D_\mathbb{A})$, 
we denote by $\JL(\Pi)$, its Jacquet-Langlands lift to $\GL_{2n}(\mathbb{A}_F).$ 

In this section, we will prove that the a non-vanishing symplectic period of a discrete automorphic representation is taken to a
non-vanishing period by the Jacquet-Langlands correspondence. 
In \cite{Off}, Offen studied the symplectic periods on the discrete  automorphic representations of 
$\GL_{2n}(\mathbb{A}_F)$.
For an automorphic form $f$ in the discrete spectrum  of $\GL_{2n}(\mathbb{A}_F)$, consider the period integral
$$\int_{\Sp_{2n}(F)\setminus \Sp_{2n}(\mathbb{A}_F)} f(h)dh.$$ 

We say that an irreducible, discrete automorphic representation $\Pi$ of $\GL_{2n}(\mathbb{A}_F)$ is 
$\Sp_{2n}(\mathbb{A}_F)$-distinguished if the above period integral is not identically zero on the space of $\Pi.$
We now recall a result from \cite{OSE08} that we will use in this section.
\begin{theorem}\label{local-global}
 Let $F$ be a number field and let $\Pi=\otimes'_v \Pi_v$ be an irreducible automorphic representation 
 of $\GL_{2n}(\mathbb{A}_F)$ in the discrete spectrum. Then the following are equivalent:
\begin{enumerate}
 \item $\Pi$ is $\Sp_{2n}(\mathbb{A}_F)$-distinguished,
\item $\Pi_v$ is $\Sp_{2n}(F_v)$-distinguished for all places $v$ of $F$,
\item $\Pi_{v_0}$ is $\Sp_{2n}(F_v)$-distinguished for some finite place $v_0$ of $F$,
\end{enumerate}
\end{theorem}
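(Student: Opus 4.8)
The plan is to reduce everything to the Moeglin--Waldspurger description of the discrete spectrum of $\GL_{2n}$ together with the local classification of $\Sp_{2n}$-distinguished unitary representations. Write $\Pi=\mathrm{Sp}(\tau,m)$ for the Speh representation attached to a cuspidal automorphic representation $\tau$ of $\GL_d(\mathbb{A}_F)$, where $dm=2n$, so that $\Pi_v=\mathrm{Sp}(\tau_v,m)$ at every place $v$, with $\tau_v$ the (generic, unitary) local component of $\tau$. The first point I would record is that the local distinction criterion depends only on the integer $m$, not on the place $v$: by the local theory of Klyachko models (Offen--Sayag, \cite{OSE07}), $\mathrm{Sp}(\tau_v,m)$ is $\Sp_{2n}(F_v)$-distinguished precisely when $m$ is even. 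Indeed, when $m$ is odd one has $d$ even, and a generic representation of $\GL_d(F_v)$ admits no $\Sp_d(F_v)$-invariant functional (the local result of \cite{HR90}), so the only possible source of distinction disappears; when $m$ is even one gets distinction irrespective of $\tau_v$. Granting this, $(2)\Leftrightarrow(3)$ is immediate, since each of $(2)$ and $(3)$ is then equivalent to the single condition ``$m$ is even'', a property of the global datum $(\tau,m)$ alone.

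It remains to link the global condition $(1)$ to the same parity. For $(1)\Rightarrow(3)$, suppose the symplectic period is not identically zero on $\Pi=\mathrm{Sp}(\tau,m)$; I would conclude that $m$ is even using Offen's computation of the symplectic period of a Speh representation. One realizes $\Pi$ as the iterated residue, at the appropriate point, of the Eisenstein series induced from $m$ copies of $\tau$ on $\GL_d$, and Offen's truncation-and-residue analysis (in \cite{Off}) shows that for $m$ odd the period collapses to the symplectic period of $\tau$ on $\GL_d(\mathbb{A}_F)$, which vanishes because $\tau$ is cuspidal and generic. Hence non-vanishing of the global period forces $m$ even, and by the first paragraph $\Pi_v$ is then distinguished at every place, in particular at some finite $v_0$.

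For the converse $(2)\Rightarrow(1)$, i.e.\ ``$m$ even $\Rightarrow$ global period non-zero'', I would once more use the residue realization of $\mathrm{Sp}(\tau,m)$ and Offen's explicit evaluation of its symplectic period, unfolded into an Eulerian expression. When $m$ is even, the local factor at $v$ of this expression is, after the natural normalization, a non-zero element of $\mathrm{Hom}_{\Sp_{2n}(F_v)}(\Pi_v,\mathbb{C})$ --- a space which is one-dimensional and, as recorded above, non-zero exactly because $m$ is even --- while the normalizing $L$-factors are finite and non-zero at the point of evaluation, so the product does not degenerate. This is the substantive analytic input, and the full details are in \cite{Off} and \cite{OSE08}, which I would cite.

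The main obstacle is precisely this last step: passing from the family of non-zero local $\mathrm{Hom}$-spaces to a genuinely non-vanishing global period integral. The difficulty is twofold. First, the symplectic period is not \emph{a priori} factorizable, so one cannot simply multiply local functionals together; it is the Moeglin--Waldspurger structure of the discrete spectrum that allows one to replace $\Pi$ by a residue of Eisenstein series and then unfold the period into an Euler product. Second, one must control the normalizing $L$-functions at the point of evaluation to exclude an accidental global vanishing. Once the period is written as an Euler product with the local factors pinned down --- here the uniqueness (one-dimensionality) of the local symplectic model is what identifies them --- the whole chain $(1)\Leftrightarrow(2)\Leftrightarrow(3)$ follows.
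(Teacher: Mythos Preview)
The paper does not prove this theorem at all: it is introduced with ``We now recall a result from \cite{OSE08} that we will use in this section'' and is simply quoted as input. So there is no argument in the paper to compare against; your proposal goes well beyond what the paper does and sketches, correctly in outline, the actual content of the cited works of Offen and Offen--Sayag.

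Your strategy --- reduce via Moeglin--Waldspurger to $\Pi=\mathrm{Sp}(\tau,m)$ with $dm=2n$, and show that each of (1), (2), (3) is equivalent to the single parity condition ``$m$ even'' --- is exactly the right one, and the global direction via Offen's residue computation in \cite{Off} is accurately described. One step is imprecise: your justification that $m$ odd forces $\Pi_v$ to be locally non-distinguished (``$d$ is even and a generic representation of $\GL_d(F_v)$ has no $\Sp_d(F_v)$-invariant functional, so the only possible source of distinction disappears'') does not work as written, since distinction of $u(\tau_v,m)$ by $\Sp_{2n}(F_v)$ is not a priori governed by distinction of $\tau_v$ by $\Sp_d(F_v)$. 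The clean way is to expand $\tau_v$ into its discrete-series and complementary-series constituents, so that $u(\tau_v,m)$ becomes a product of genuine Speh modules $U(\delta_i,m)$ and complementary series $\pi(U(\delta_i,m),\alpha_i)$, and then apply the Offen--Sayag classification in \cite{OSE07} directly: such a product is $\Sp_{2n}(F_v)$-distinguished if and only if every Speh exponent is even, i.e.\ if and only if $m$ is even. With that adjustment your sketch is sound and matches the references the paper invokes.
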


Jacquet and Rallis have shown in \cite{JR92b}, that the symplectic period  vanishes for a cuspidal automorphic representation 
of $\GL_{2n}(\mathbb{A}_F)$, that is 
$$\int_{\Sp_{2n}(F)\setminus \Sp_{2n}(\mathbb{A}_F)} f(h)dh=0.$$ 
In the next theorem, in the spirit of Jacquet-Rallis result mentioned above, we prove that those cuspidal automorphic 
representations $\Pi$ of  $\GL_{n}(D_\mathbb{A})$ for which $\JL(\Pi)$ is a cuspidal automorphic representation of $\GL_{2n}(\mathbb{A}_F)$,
have vanishing symplectic periods.
\begin{theorem}
 Suppose that  $\Pi$ is a cuspidal automorphic representation of $\GL_{n}(D_\mathbb{A})$ whose Jacquet-Langlands 
lift $\JL(\Pi)$ to $\GL_{2n}(\mathbb{A}_F)$ is cuspidal then the symplectic period integrals of $\Pi$ vanish identically.
\end{theorem}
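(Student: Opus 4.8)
The plan is to prove vanishing of the symplectic period for $\Pi$ on $\GL_n(D_\mathbb{A})$ by transporting the Jacquet--Rallis argument through the Jacquet--Langlands correspondence, exploiting the hypothesis that $\JL(\Pi)$ is cuspidal. First I would set up the comparison of the two period integrals. For a cusp form $f$ in the space of $\Pi$, the symplectic period is $P_D(f) = \int_{\Sp_n(D)\setminus \Sp_n(D_\mathbb{A})} f(h)\,dh$, which converges by cuspidality. On the other side, $\JL(\Pi)$ being cuspidal, Jacquet--Rallis \cite{JR92b} tells us its symplectic period on $\Sp_{2n}(F)\setminus \Sp_{2n}(\mathbb{A}_F)$ vanishes identically; so the strategy is to show that non-vanishing of $P_D$ on $\Pi$ would force non-vanishing of the symplectic period on $\JL(\Pi)$, contradicting \cite{JR92b}. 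In other words, I would prove the statement as a special case of the implication ``$\Pi$ $\Sp_n(D_\mathbb{A})$-distinguished $\Rightarrow$ $\JL(\Pi)$ $\Sp_{2n}(\mathbb{A}_F)$-distinguished'' which is exactly the content of Theorem 1.4 (the third global theorem of the introduction); here I would invoke that theorem, or rather re-derive it, using Theorem \ref{local-global} of Offen--Sayag.

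Concretely, the key steps are as follows. (i) Assume for contradiction $P_D\not\equiv 0$ on $\Pi$; then $\Pi$ is $\Sp_n(D_\mathbb{A})$-distinguished, hence $\Pi_v$ is $\Sp_n(D_v)$-distinguished for every place $v$ where the period integral localizes nontrivially; in particular there is a finite place $v_0$ (which we may take to be one where $D_{v_0}$ is a division algebra, since $D$ is division at infinitely many finite places) at which $\Pi_{v_0}$ is $\Sp_n(D_{v_0})$-distinguished. (ii) At such a split place the local Jacquet--Langlands map $\JL_{v_0}$ carries $\Pi_{v_0}$ to a discrete series representation $\JL(\Pi)_{v_0}$ of $\GL_{2n}(F_{v_0})$; I would need the local statement that $\Sp_n(D_{v_0})$-distinction of $\Pi_{v_0}$ implies $\Sp_{2n}(F_{v_0})$-distinction of $\JL(\Pi)_{v_0}$. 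This is the local-to-local compatibility of the JL correspondence with symplectic distinction, and is precisely the local input one expects behind Theorem 1.4. (iii) Given that, apply Theorem \ref{local-global} in the direction (3)$\Rightarrow$(1) to conclude that $\JL(\Pi)$ is $\Sp_{2n}(\mathbb{A}_F)$-distinguished, i.e. its symplectic period does not vanish identically. (iv) But $\JL(\Pi)$ is cuspidal by hypothesis, so Jacquet--Rallis \cite{JR92b} forces its symplectic period to vanish identically --- a contradiction. Hence $P_D\equiv 0$ on $\Pi$.

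The main obstacle is step (ii): establishing that local $\Sp_n(D_v)$-distinction is preserved under the local Jacquet--Langlands correspondence. Unlike the global statement, this local compatibility is not a formal consequence of Offen--Sayag and must be argued directly; the natural route is to realize both distinction statements in terms of the same arithmetic invariant --- for discrete series, the Offen--Sayag classification of $\Sp_{2n}(k)$-distinguished unitary representations of $\GL_{2n}(k)$ identifies distinguished discrete series with those whose Langlands parameter $\sigma\otimes{\rm sp}_r$ has $r$ even, and one knows how JL acts on parameters --- so I would compare the parameter of $\Pi_{v_0}$ and of $\JL(\Pi)_{v_0}$ and check the parity/shape condition is preserved. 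This is also where Theorem 1.2 of the paper (the ``$r$ odd $\Rightarrow$ not distinguished'' obstruction) is the shadow of the correct local criterion. Alternatively, if one prefers to keep the argument purely global, one avoids step (ii) entirely by citing Theorem 1.4 of the introduction as a black box: non-vanishing of the $\Sp_n(D_\mathbb{A})$-period of $\Pi$ gives non-vanishing of the $\Sp_{2n}(\mathbb{A}_F)$-period of $\JL(\Pi)$, which contradicts \cite{JR92b} once $\JL(\Pi)$ is cuspidal. I would present the proof in this streamlined form, deferring the local compatibility to the proof of Theorem 1.4, and remark that the hypothesis ``$\JL(\Pi)$ cuspidal'' is exactly what makes \cite{JR92b} applicable and is automatic, e.g., when $n$ is odd and $\Pi$ is cuspidal with $\Pi_v$ square-integrable at the ramified places of $D$.
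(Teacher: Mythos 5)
Your streamlined final version --- assume $\Pi$ has non-vanishing symplectic period, invoke the paper's Theorem~\ref{global} (Theorem 1.4 of the introduction) to deduce that $\JL(\Pi)$ is $\Sp_{2n}(\mathbb{A}_F)$-distinguished, then contradict Jacquet--Rallis \cite{JR92b} since $\JL(\Pi)$ is cuspidal --- is a logically correct proof. It is, however, a longer route than the one the paper actually takes. The paper's proof is more elementary and does not invoke Jacquet--Rallis or Offen--Sayag at all: at the (almost all) places $v$ where $D$ splits one has $\Pi_v = \JL(\Pi)_v$; cuspidality of $\JL(\Pi)$ forces it to be globally, hence everywhere locally, generic; and Theorem 3.2.2 of Heumos--Rallis \cite{HR90} says a generic representation of $\GL_{2n}(F_v)$ is never $\Sp_{2n}(F_v)$-distinguished. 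This immediately contradicts local distinction of $\Pi_v$ at split places. Your route buys nothing extra here, and it smuggles in the full machinery of Theorem~\ref{local-global} where a single local genericity obstruction suffices; the paper mentions \cite{JR92b} only as motivation, not as an ingredient.

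The detailed scaffolding in your steps (i)--(iv) contains substantive errors that you should fix even if you keep the streamlined form. First, a quaternion algebra over a number field is ramified at only \emph{finitely} many places; your parenthetical ``since $D$ is division at infinitely many finite places'' has this backwards, and the subsequent ``at such a split place'' contradicts the choice you just made. Second, the ``local JL compatibility at ramified places'' you posit in step (ii) is not what Theorem~\ref{global} uses --- its proof simply picks a place where $D$ splits, where the local JL map is the identity and there is nothing to transfer --- and moreover such compatibility is \emph{false} in general: Remark~\ref{remark1} of the paper exhibits a principal series $\chi_1\times\chi_2$ of $\GL_2(D_v)$ which is $\Sp_2(D_v)$-distinguished while its Jacquet--Langlands lift $\chi_1\mathrm{St}_2\times\chi_2\mathrm{St}_2$ on $\GL_4(F_v)$ is generic and hence not $\Sp_4(F_v)$-distinguished. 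Third, your appeal to Offen--Sayag for ``distinguished discrete series of $\GL_{2n}(F_v)$ with parameter $\sigma\otimes\mathrm{sp}_r$, $r$ even'' is misplaced: no discrete series of the split group $\GL_{2n}(F_v)$ is $\Sp_{2n}(F_v)$-distinguished, precisely because discrete series are generic; the parity criterion you have in mind belongs to the (conjectural) classification on the inner form $\GL_n(D_v)$, not on $\GL_{2n}(F_v)$.
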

\begin{proof} 
Assume if possible that $\Pi$ has a non-zero symplectic period.  Then $\Pi_v$ has a non-zero symplectic period for all places 
$v$ of $F.$ The representations $\JL(\Pi)$ and $\Pi$ are the same at all places $v$ of $F$ where $ D$ splits and therefore by 
the Theorem 3.2.2 of \cite{HR90}, $\Pi_v$ is not generic for any $v$ where $D$ splits. Since a cuspidal automorphic 
representation of $\GL_{2n}(\mathbb{A}_F)$ is globally generic, the local representations $\Pi_v$ are locally generic for all $v$, 
which gives a contradiction.
 \end{proof}

\begin{theorem}\label{global}
    If $\Pi$ is an automorphic representation of $\GL_{n}(D_\mathbb{A})$ which appears in the discrete spectrum, and is distinguished
 by  $\Sp_{n}(D_\mathbb{A})$ then $\JL(\Pi)$, which is an automorphic representation of $\GL_{2n}(\mathbb{A}_F)$, is globally distinguished by 
 $\Sp_{2n}(\mathbb{A}_F).$
\end{theorem}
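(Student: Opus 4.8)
The plan is to deduce this from the local--global criterion of Offen and Sayag (Theorem \ref{local-global}) for $\GL_{2n}(\mathbb{A}_F)$, combined with two elementary facts: the quaternion division algebra $D$ splits at all but finitely many places of $F$, and the local Jacquet--Langlands correspondence at a split place is the identity. Concretely, it will suffice to exhibit one \emph{finite} place $v_0$ of $F$ at which $\JL(\Pi)_{v_0}$ is $\Sp_{2n}(F_{v_0})$-distinguished; the implication $(3)\Rightarrow(1)$ of Theorem \ref{local-global} then upgrades this to $\Sp_{2n}(\mathbb{A}_F)$-distinction of $\JL(\Pi)$, once we know that $\JL(\Pi)$ lies in the discrete spectrum of $\GL_{2n}(\mathbb{A}_F)$, which is part of the global Jacquet--Langlands correspondence.

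The first step is to observe that global distinction of $\Pi$ propagates to local distinction at every place. Since $\Pi$ is $\Sp_n(D_\mathbb{A})$-distinguished, the symplectic period $\phi \mapsto \int_{\Sp_n(D)\setminus \Sp_n(D_\mathbb{A})} \phi(h)\,dh$ is a nonzero $\Sp_n(D_\mathbb{A})$-invariant linear functional $P$ on $\Pi \cong \otimes'_v \Pi_v$. Picking a pure tensor $\phi = \otimes_v \phi_v$ with $P(\phi)\neq 0$ and, for a fixed place $v_0$, restricting $P$ to vectors of the form $w \otimes \bigl(\otimes_{w\neq v_0}\phi_w\bigr)$ with $w\in\Pi_{v_0}$, one obtains a linear functional on $\Pi_{v_0}$ that is nonzero (it sends $\phi_{v_0}$ to $P(\phi)$) and invariant under $\Sp_n(D_{v_0})$, viewed inside $\Sp_n(D_\mathbb{A})$ as the component at $v_0$. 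Hence $\mathrm{Hom}_{\Sp_n(D_{v_0})}(\Pi_{v_0},\C)\neq 0$ for every place $v_0$.

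Next I would take $v_0$ to be a finite place at which $D$ splits; such a place exists (in fact infinitely many do) because $D$ is ramified only at a finite set of places. There $D_{v_0}\cong M_2(F_{v_0})$, so $\GL_n(D_{v_0})\cong \GL_{2n}(F_{v_0})$, the isometry group $\Sp_n(D_{v_0})$ of the Hermitian form becomes the split group $\Sp_{2n}(F_{v_0})$, and the local Jacquet--Langlands lift satisfies $\JL(\Pi)_{v_0}\cong\Pi_{v_0}$ as representations of $\GL_{2n}(F_{v_0})$. Combining this with the previous step, $\JL(\Pi)_{v_0}$ is $\Sp_{2n}(F_{v_0})$-distinguished at the finite place $v_0$. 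Since $\JL(\Pi)$ is in the discrete spectrum of $\GL_{2n}(\mathbb{A}_F)$, Theorem \ref{local-global} now applies at $v_0$ and yields that $\JL(\Pi)$ is $\Sp_{2n}(\mathbb{A}_F)$-distinguished, completing the argument.

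I expect there to be no serious analytic obstacle in this approach once Theorem \ref{local-global} is granted --- all of the hard harmonic analysis of symplectic periods on $\GL_{2n}$ is packaged into it. The only points requiring a little care, and both standard, are: (i) that the global Jacquet--Langlands lift of a discrete automorphic representation of $\GL_n(D_\mathbb{A})$ again lies in the discrete spectrum of $\GL_{2n}(\mathbb{A}_F)$, which is what licenses the use of Theorem \ref{local-global}; and (ii) the existence of a finite place $v_0$ of split type for $D$, so that the identifications $\JL(\Pi)_{v_0}\cong\Pi_{v_0}$ and $\Sp_n(D_{v_0})\cong\Sp_{2n}(F_{v_0})$ turn the problem into a literal equality of distinction questions.
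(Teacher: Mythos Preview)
Your proposal is correct and follows essentially the same route as the paper: global distinction of $\Pi$ forces local distinction at every place, at a finite split place $v_0$ one has $\JL(\Pi)_{v_0}\cong\Pi_{v_0}$ and $\Sp_n(D_{v_0})\cong\Sp_{2n}(F_{v_0})$, and then the implication $(3)\Rightarrow(1)$ of Theorem~\ref{local-global} finishes. Your write-up is in fact more careful than the paper's, which does not explicitly flag that $\JL(\Pi)$ must lie in the discrete spectrum for Theorem~\ref{local-global} to apply.
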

\begin{proof}
  If $\Pi$ is $\Sp_{n}(D_\mathbb{A})$-distinguished, then it is locally distinguished at all places $v$ of $F$. Also we know that 
  $ D$ splits at almost all places of $F$ so $\Pi_v=\JL(\Pi)_v$ at almost all places of $F$.
 By  Theorem \ref{local-global}, global distinction of Jacquet-Langland lift $\JL(\Pi)$ is a 
consequence of local distinction at any place $v$ of $F$ which we know. 
\end{proof}

\begin{remark}
 If $\Pi$ is a global automorphic representations of $\GL_2(D_\mathbb{A})$ which is distinguished by $\Sp_2(D_\mathbb{A})$
 with a local component $\Pi_v=\chi_1 \times \chi_2$, a representation of $\GL_2(D_v)$ for characters $\chi_1,\chi_2:D_v^{\times} \rightarrow \C^{\times}$,
 then $\JL(\Pi)$, an automorphic representation of $\GL_4({\mathbb A}_F)$, must be distinguished by $\Sp_4({\mathbb A}_F)$
 by Theorem  \ref{global}. Since $\JL(\Pi_v)= \chi_1\circ{\rm St} \times \chi_2\circ {\rm St}$ as a representation of 
 $\GL_4(k_v),$ this seems to be in contradiction to the fact that $\JL(\Pi)$ is globally distinguished by $\Sp_4({\mathbb A}_F)$. The source
 of this apparent contradiction is the fact that in this case, $\JL(\Pi)_v=\chi_1 \times \chi_2$ as a representation
 of $\GL_4(k_v)$, as follows from the work of Badulescu.
\end{remark}
A supercuspidal representation of $\GL_{2n}(k)$ is not distinguished by $\Sp_{2n}(k)$. The situation in the case of $\GL_n(D)$
is different, that is, it may happen that a supercuspidal representation of $\GL_n(D)$ is distinguished by $\Sp_n(D)$.  We have an example of  distinguished supercuspidal representations due to Dipendra Prasad in  the next section. The following theorem gives a partial answer to the question on distinction of a supercuspidal representation of $\GL_n(D)$ by $\Sp_n(D)$.
\begin{theorem}
 Let $\pi_v$ be a supercuspidal representation of $\GL_n(D_v)$ with Langlands parameter $\sigma_{\pi_v}=\sigma\otimes {\rm sp}_r$ where $\sigma$ is an irreducible representation of the Weil-group $W_k$, and ${\rm sp}_r$ is the $r$-dimensional irreducible representation of ${\rm SL}_2(\mathbb C)$ part of the Weil-Deligne group $W_k'$. Then if $r$ is odd, $\pi_v$ is not distinguished by $\Sp_n(D_v)$.
\end{theorem}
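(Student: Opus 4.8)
The plan is, first, to show that the hypothesis already forces $r=1$, and then, in that remaining case, to derive a contradiction by globalizing and applying Theorem~\ref{global} together with the vanishing of cuspidal symplectic periods on $\GL_{2n}$ (Jacquet--Rallis).

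\emph{Reduction to $r=1$.} First I would note that, since $\sigma\otimes{\rm sp}_r$ has dimension $2n$, we have $\dim\sigma=2n/r$, and as $r$ is odd it divides $n$; put $m=n/r$. If $r\geq 2$, then $\sigma$ is an irreducible representation of $W_k$ of dimension $2m$, hence the Langlands parameter of a supercuspidal representation $\tau$ of $\GL_m(D_v)$, and the (unique) representation $\pi_v$ of $\GL_n(D_v)$ with parameter $\sigma\otimes{\rm sp}_r$ is the corresponding generalized Steinberg: it is a proper subquotient of a representation parabolically induced from the block-diagonal Levi $\GL_m(D_v)^{\times r}$, and being such a subquotient it has a non-zero Jacquet module along that parabolic, so it is not supercuspidal. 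Thus the hypotheses of the theorem can hold only when $r=1$, in which case $\sigma$ is irreducible of dimension $2n$ and $\JL(\pi_v)$ is a supercuspidal representation of $\GL_{2n}(F_v)$.

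\emph{The case $r=1$.} Now, assuming for contradiction that $\pi_v$ is $\Sp_n(D_v)$-distinguished, I would globalize: choose a number field $F$ with a place $v$ at which $F_v\cong k$, a quaternion division algebra $D/F$ ramified at $v$, and a cuspidal automorphic representation $\Pi$ of $\GL_n(D_\mathbb{A})$ with $\Pi_v\cong\pi_v$ and with non-vanishing period on $\Sp_n(D)\setminus\Sp_n(D_\mathbb{A})$; such a distinguished globalization of the supercuspidal distinguished representation $\pi_v$ should be produced by a Poincar\'e-series construction in which the symplectic period integral is tested against a matrix coefficient of $\pi_v$ whose local $\Sp_n(D_v)$-period is non-zero (the latter existing precisely because $\pi_v$ is supercuspidal and $\Sp_n(D_v)$-distinguished). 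Since $\Pi_v=\pi_v$ is supercuspidal, $\Pi$ is cuspidal and $\JL(\pi_v)$ is tempered, so $\JL(\Pi)$ is a \emph{cuspidal} automorphic representation of $\GL_{2n}(\mathbb{A}_F)$ --- it cannot be residual, a residual representation having non-tempered local components everywhere. By Theorem~\ref{global}, $\JL(\Pi)$ is $\Sp_{2n}(\mathbb{A}_F)$-distinguished; but by the theorem of Jacquet and Rallis the symplectic period of a cuspidal automorphic representation of $\GL_{2n}(\mathbb{A}_F)$ vanishes identically. This contradiction would show that $\pi_v$ is not $\Sp_n(D_v)$-distinguished.

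\emph{Main obstacle.} The hard part will be the distinguished globalization in the case $r=1$: realizing the distinguished supercuspidal $\pi_v$ as the component at $v$ of a cuspidal automorphic representation with a non-vanishing global symplectic period. This step uses the reduction to $r=1$ essentially and must fail for $r$ even --- by the examples of Section~6 a supercuspidal $\pi_v$ with $r$ even can be $\Sp_n(D_v)$-distinguished, so no cuspidal distinguished globalization of such a $\pi_v$ can exist, since otherwise the same argument would contradict Jacquet--Rallis. An alternative, purely local route would be to establish that for a supercuspidal $\pi_v$ with irreducible parameter, $\Sp_n(D_v)$-distinction of $\pi_v$ is equivalent to $\Sp_{2n}(F_v)$-distinction of $\JL(\pi_v)$ --- a matching of the (at most one-dimensional) distinction spaces across the Jacquet--Langlands correspondence, valid in this ``stable'' range although it fails for degenerate parameters as Remark~\ref{remark1} shows --- and then to invoke the fact, recalled above, that a supercuspidal representation of $\GL_{2n}(F_v)$ is never $\Sp_{2n}(F_v)$-distinguished; proving that local equivalence would be the corresponding obstacle.
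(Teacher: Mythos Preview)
Your reduction to $r=1$ is correct and is a simplification the paper does not make in this proof: since $r\cdot\dim\sigma=2n$ with $r$ odd, one has $r\mid n$, so $\dim\sigma=2m$ is even and the discrete series of $\GL_n(D_v)$ with parameter $\sigma\otimes{\rm sp}_r$ is the generalized Steinberg built from the supercuspidal of $\GL_m(D_v)$ with parameter $\sigma$, hence not supercuspidal for $r>1$ --- this is precisely Proposition~7.2 of the paper specialized to the quaternion case. The paper instead treats all odd $r$ at once: after the same globalization step (for which it cites \cite{PS08}, the result you describe as a Poincar\'e-series construction), it observes via Badulescu that $\JL(\Pi)_v$ is either the discrete series $\JL(\pi_v)$ (generic, hence not $\Sp_{2n}(F_v)$-distinguished) or the Speh module $U(\rho,r)$ with $r$ odd (not distinguished by the Offen--Sayag classification), and derives the contradiction from local non-distinction at $v$. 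Your route trades the Offen--Sayag input for the earlier Jacquet--Rallis vanishing, which is lighter; the paper's route, as its subsequent remark notes, would extend verbatim to non-supercuspidal discrete series once the globalization is available, whereas your reduction uses supercuspidality in an essential way. One step in your argument deserves an extra sentence: to conclude that $\JL(\Pi)$ is cuspidal you need $\JL(\Pi)_v$, not just $\JL(\pi_v)$, to be tempered, and in general these may differ; but for $r=1$ the parameter $\sigma$ is irreducible, so the only irreducible unitary representation of $\GL_{2n}(F_v)$ in the fiber of $|{\bf LJ}|$ over $\pi_v$ is the supercuspidal $\JL(\pi_v)$ itself, and your conclusion follows.
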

\begin{proof}
 Assuming $r$ is odd, we prove that $\pi_v$ is not distinguished by $\Sp_n(D_v)$. Using a theorem of \cite{PS08}, we globalize $\pi_v$ to be globally distinguished automorphic representation $\Pi$ of $\GL_n(D_\mathbb{A})$ where $D$ is a global division algebra over a number field $F$
such that $F_v=k$, and $D\otimes F_v=D_v$.

Using the Jacquet-Langlands correspondence of Badulescu, we get an automorphic representation $\JL(\Pi)$ of $\GL_{2n}(\mathbb{A}_F)$ which is locally distinguished by  $\Sp_{2n}(F_w)$ at all places $w$ of $F$ where $D$ splits. By a theorem of Offen-Sayag, $\JL(\Pi)$ is globally distinguished by  $\Sp_{2n}(\mathbb{A}_F)$. By work of Badulescu, $\JL(\Pi)_v$ is one of the following
\begin{enumerate}
 \item $\JL(\Pi)_v=\JL(\Pi_v)$, a discrete series representation, or
\item $\JL(\Pi)_v=$ a Speh representation with Langlands parameter  
\[\sigma\otimes(\nu^{(r-1)/2}\oplus\nu^{(r-3)/2}\oplus\dots\oplus \nu^{-(r-1)/2}).
\]

\end{enumerate}
The first choice being a discrete  series representation, in particular generic, is never distinguished by $\Sp_{2n}(F_v)$. The fact that the second choice is also not distinguished by $\Sp_{2n}(F_v)$ uses that $r$ is odd, and is consequence of a theorem of Offen-Sayag about them.
\end{proof}

\begin{remark} The only place we used  supercuspidality of the  representation $\pi_v$ of $\GL_n(D_v)$ with Langlands parameter $\sigma_{\pi_v}=\sigma\otimes {\rm sp}_r$ where $\sigma$ is an irreducible representation of the Weil-group $W_k$, and ${\rm sp}_r$ is the $r$-dimensional irreducible representation of the ${\rm SL}_2(\mathbb C)$ part of the Weil-Deligne group $W_k'$ is in the globalization theorem of \cite{PS08}. If we grant ourselves such a globalization theorem for discrete series too, then we have the same
conclusion as in the theorem. 
\end{remark}

The theorem below together with local analysis done in Section 4 completes the distinction problem for $\GL_2(D)$.
\begin{theorem}
 No discrete series representation of $\GL_2(D_v)$ is distinguished by $\Sp_2(D_v)$.
\end{theorem}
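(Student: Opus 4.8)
The plan is to split the discrete series of $\GL_2(D_v)$ into the supercuspidal ones and the non-supercuspidal ones, and to handle each family with an input already at our disposal: the local computation of Section~4 (Theorem~\ref{indrepthm}) for the non-supercuspidal discrete series, and the theorem above on supercuspidals with odd $r$ for the supercuspidal ones. If $D_v$ splits there is nothing new: then $\GL_2(D_v)\cong\GL_4(F_v)$ and $\Sp_2(D_v)\cong\Sp_4(F_v)$, every discrete series of $\GL_4(F_v)$ is generic, and generic representations of $\GL_{2n}(F_v)$ are not $\Sp_{2n}(F_v)$-distinguished by Theorem~3.2.2 of \cite{HR90}; so I will assume that $D_v$ is the quaternion division algebra over $F_v$.

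By the classification of discrete series of $\GL_2(D_v)$ (Tadi\'c, Badulescu; compare Theorem~\ref{Tadic}), each such representation is either supercuspidal, or a generalized Steinberg representation ${\rm St}(\sigma)$ for an irreducible $\sigma$ of $D_v^{\times}$ with ${\rm dim}\,\sigma>1$ (the unique irreducible submodule of $\sigma\nu^{1/2}\times\sigma\nu^{-1/2}$), or a twist ${\rm St}\otimes\chi$ of the Steinberg representation of $\GL_2(D_v)$ by a character $\chi$ of $F_v^{\times}$ pulled back via the reduced norm (the unique irreducible submodule of $\chi\nu\times\chi\nu^{-1}$, whose one-dimensional quotient is $\C_\chi$). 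The last two families are precisely the non-supercuspidal discrete series, and for them I would just quote Theorem~\ref{indrepthm}: among the subquotients of a principal series $\sigma_1\times\sigma_2$ of $\GL_2(D_v)$ the only $\Sp_2(D_v)$-distinguished ones are the Speh representation ${\rm Sp}(\sigma)$ when ${\rm dim}(\sigma_1\otimes\sigma_2)>1$, and the one-dimensional $\C_\chi$ when both $\sigma_i$ are one-dimensional with $\sigma_1\simeq\sigma_2\otimes\nu^2$; since neither ${\rm St}(\sigma)$ nor ${\rm St}\otimes\chi$ is on this list, these representations are not distinguished by $\Sp_2(D_v)$.

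It remains to treat a supercuspidal representation $\pi$ of $\GL_2(D_v)$, with Langlands parameter $\sigma_\pi=\sigma\otimes{\rm sp}_r$; the crux is that necessarily $r=1$. Since $\JL$ is a bijection on discrete series and carries the two generalized-Steinberg families above onto the non-cuspidal discrete series ${\rm St}_m(\rho)$ ($m>1$) of $\GL_4(F_v)$, a supercuspidal $\pi$ of $\GL_2(D_v)$ must have Jacquet-Langlands lift $\JL(\pi)$ which is \emph{cuspidal} on $\GL_4(F_v)$; a cuspidal representation of $\GL_4(F_v)$ has an irreducible four-dimensional Weil-group parameter with trivial monodromy, so the ${\rm SL}_2(\C)$-part of $\sigma_\pi$ is ${\rm sp}_1$, i.e. $r=1$, which is odd. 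The theorem above on supercuspidals with odd $r$ then yields ${\rm Hom}_{\Sp_2(D_v)}(\pi,\C)=0$, which completes the argument. The only step here that is not mere bookkeeping is the chain of equivalences ``$\pi$ supercuspidal on $\GL_2(D_v)\iff\JL(\pi)$ cuspidal on $\GL_4(F_v)\iff r=1$''; this relies on Badulescu's description of the Jacquet-Langlands correspondence on discrete series and its compatibility with the Zelevinsky--Tadi\'c segment classification, and one should also check the small-rank edge cases of Theorem~\ref{Tadic} — in particular that ${\rm Ind}_P^{\GL_2(D_v)}(\sigma\nu^{1/2}\otimes\sigma\nu^{-1/2})$ is reducible precisely when ${\rm dim}\,\sigma>1$ — to be certain that the three families above are exhaustive and pairwise disjoint.
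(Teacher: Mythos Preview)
Your proposal is correct and follows essentially the same two-step strategy as the paper: the non-supercuspidal discrete series are handled by Theorem~\ref{indrepthm}, and the supercuspidals by the preceding theorem once one knows $r$ is odd. The only cosmetic difference is in justifying that a supercuspidal of $\GL_2(D_v)$ has $r=1$: the paper invokes the Deligne--Kazhdan--Vigneras/Badulescu criterion $(r,n)=1$ (Proposition~7.2), whereas you obtain the same conclusion by arguing that $\JL$ matches the non-supercuspidal discrete series on both sides and hence sends supercuspidals of $\GL_2(D_v)$ to supercuspidals of $\GL_4(F_v)$ --- this is the same fact unpacked, and your appeal to Badulescu at that point is exactly what the paper does.
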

\begin{proof}
 By our local analysis, we know this already for those discrete series representations of $\GL_2(D_v)$ which are not supercuspidal. By the previous theorem, we also know that no supercuspidal representation of $\GL_2(D_v)$ is distinguished by $\Sp_2(D_v)$ as long as its Langlands parameter is not of the form $\sigma_{\pi}=\sigma\otimes {\rm sp}_r$ where $r=2,4$. But by the work of Badulescu (cf. Proposition 7.2 below), such Langlands parameter correspond to non-supercuspidal discrete series representations of $\GL_2(D_v)$, completing the proof of theorem.

\end{proof}

\section{Explicit examples of supercuspidals with symplectic period}

In this section we construct examples of supercuspidal representations of $\GL_n(D)$ which are distinguished by $\Sp_n(D)$ for any odd $n\geq 1$.

Recall that $\OO_D$ is the maximal compact subring of $D$ with $\pi_D$ a uniformizing parameter of $\OO_D$, and $\OO_D/\langle\pi_D \OO_D\rangle \simeq \mathbb{F}_{q^2}$ where $\mathbb{F}_q$ is the residue field of $k$. The anti-automorphism $x\rightarrow \bar{x}$ of  $D$ preserve $\OO_D$
and acts as the Galois involution of $\mathbb{F}_{q^2}$ over $\mathbb{F}_q$.

Recall also that we have defined $\Sp_n(D)$ to be the subgroup of $\GL_n(D)$ by:
\[\Sp_n(D)=\left\{A\in \GL_n(D)|~AJ~{}^t\bar A=J\right\},\] where ${}^t\bar{A} = (\bar{a}_{ji})$ for $A=(a_{ij})$ and
 $$J=\begin{pmatrix}
   & & & & & 1\\
   & & & & 1 & \\
  & & & 1&  & \\
 & & .& & &\\
& .& & & &\\
1& & & & &\\
   
\end{pmatrix}.$$\\
It follows that $\Sp_n(\OO_D)\subset \GL_n(\OO_D)$, and taking the reduction of these compact groups  modulo $\pi_D$, we have:
\[ {\rm U}_n(\mathbb{F}_q)\hookrightarrow \GL_n(\mathbb{F}_{q^2}),
\] 
where ${\rm U}_n$ is defined using the Hermitian form 
$$J=\begin{pmatrix}
   & & & & & 1\\
   & & & & 1 & \\
  & & & 1&  & \\
 & & .& & &\\
& .& & & &\\
1& & & & &\\
   
\end{pmatrix}.$$

\begin{proposition}
 Let $\pi_{00}$ be an irreducible cuspidal representation of $\GL_n(\mathbb{F}_{q})$, $n$ an odd integer, and $\pi_{0}={\rm BC}(\pi_{00})$ be the base change of $\pi_{00}$ to $\GL_n(\mathbb{F}_{q^2})$. Using the reduction mod $\pi_D:\GL_n(\OO_D)\rightarrow \GL_n(\mathbb{F}_{q^2})$,
we can lift $\pi_0$ to an irreducible 
representation of $\GL_n(\OO_D)$ to be denoted by $\pi_{0}$ again. Let $\chi$ be a character of $k^{\times}$ which matches with the central character of $\pi_{0}$ on $\OO_k^{\times}$. Then 
\[ \pi= {\rm ind}_{k^{\times}\GL_n(\OO_D)}^{\GL_n(D)}(\chi \cdot\pi_{0})
\]
is an irreducible supercuspidal representation of $\GL_n(D)$ which is distinguished by $\Sp_n(D)$.
\end{proposition}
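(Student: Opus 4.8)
The plan is to establish distinction via a Mackey-theory/orbit argument reducing to the finite-group situation, exploiting that $\pi$ is compactly induced from $k^\times\GL_n(\OO_D)$ and that $\Sp_n(D)$ is built from the unitary group over $\OO_D$. First I would record that $\pi$ is supercuspidal and irreducible: since $\pi_{00}$ is cuspidal on $\GL_n(\F_q)$ with $n$ odd, its base change $\pi_0 = {\rm BC}(\pi_{00})$ to $\GL_n(\F_{q^2})$ is irreducible (the obstruction to irreducibility of base change for $\GL_n$ over finite fields comes from the Galois action fixing the Deligne--Lusztig parameter, which for cuspidal $\pi_{00}$ forces a regular character that is not Galois-stable unless it comes from a norm, and the parity of $n$ rules out the degenerate possibilities); compact induction of a representation trivial on $k^\times$-translates of a maximal compact then yields an irreducible supercuspidal by the usual Mautner/Bernstein--Kutzko criterion once one checks the intertwining condition $g(k^\times\GL_n(\OO_D))g^{-1}\cap k^\times\GL_n(\OO_D)$ supports no nonzero intertwiner for $g\notin k^\times\GL_n(\OO_D)$, which follows from cuspidality of $\pi_{00}$.

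Next I would compute $\mathrm{Hom}_{\Sp_n(D)}(\pi,\C)$ by Frobenius reciprocity for the compact induction. By Mackey's formula applied to the double-coset space $\Sp_n(D)\backslash \GL_n(D)/k^\times\GL_n(\OO_D)$, one has
\[
\mathrm{Hom}_{\Sp_n(D)}(\pi,\C)\;\cong\;\bigoplus_{g}\mathrm{Hom}_{\Sp_n(D)\cap g(k^\times\GL_n(\OO_D))g^{-1}}\big({}^g(\chi\cdot\pi_0),\C\big),
\]
and the point is that the identity double coset contributes: here the relevant group is $\Sp_n(\OO_D)$ (times $\OO_k^\times$, which acts by the matching central character, so the twist by $\chi$ is harmless on that piece), and reducing modulo $\pi_D$ turns the question into whether the base-changed cuspidal $\pi_0$ of $\GL_n(\F_{q^2})$ is distinguished by the finite unitary group ${\rm U}_n(\F_q)$. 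I would then invoke the known result — due to essentially to the theory of Gelfand pairs $(\GL_n(\F_{q^2}),{\rm U}_n(\F_q))$ and the explicit branching, going back to work on finite symmetric spaces — that \emph{every} irreducible representation of $\GL_n(\F_{q^2})$ arising as a base change from $\GL_n(\F_q)$ is ${\rm U}_n(\F_q)$-distinguished (indeed $(\GL_n(\F_{q^2}),{\rm U}_n(\F_q))$ is a Gelfand pair and the distinguished representations are exactly the base-change ones); since $\pi_0={\rm BC}(\pi_{00})$, it qualifies, producing the desired nonzero functional.

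The final step is to check that this $\Sp_n(\OO_D)$-invariant functional on $\pi_0$ genuinely extends to an $\Sp_n(D)$-invariant functional on the compactly induced $\pi$, i.e.\ that no other double coset destroys it and that the contribution is not cancelled. Because $\pi$ is a direct sum of its values on cosets and the induction is \emph{compact} (from an open subgroup), the functional $\ell$ on the $e$-component extends canonically to a linear form on $\pi=\bigoplus_{g}V$ supported on $\Sp_n(D)\cdot(k^\times\GL_n(\OO_D))$, provided that orbit is closed (equivalently, $k^\times\GL_n(\OO_D)\cdot\Sp_n(D)$ is open and closed in $\GL_n(D)$, which holds since $\GL_n(\OO_D)$ is open), so summing $\ell$ over the $\Sp_n(D)$-translates and using that the stabilizer in $\Sp_n(D)$ of the base point is exactly $\OO_k^\times\Sp_n(\OO_D)$ gives a well-defined $\Sp_n(D)$-invariant form. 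The main obstacle I anticipate is the finite-field input: one must be careful that $\pi_0$ is genuinely irreducible (hence the Gelfand-pair statement applies cleanly) and that the matching of central characters makes the $k^\times$-twist compatible with invariance — the oddness of $n$ is exactly what guarantees both that ${\rm BC}(\pi_{00})$ stays irreducible and, compatibly with Theorem~5.6, that this parameter has even ${\rm sp}_r$ part ($r$ here behaves evenly), so there is no conflict with the nondistinction result for odd $r$.
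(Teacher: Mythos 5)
Your proposal follows essentially the same route as the paper: reduce $\Sp_n(D)$-distinction of the compactly induced supercuspidal $\pi$ to ${\rm U}_n(\F_q)$-distinction of $\pi_0$ via the reduction mod $\pi_D$ sending $\Sp_n(\OO_D)\subset\GL_n(\OO_D)$ to ${\rm U}_n(\F_q)\subset\GL_n(\F_{q^2})$, and then invoke the finite-field result that base-change representations from $\GL_n(\F_q)$ are ${\rm U}_n(\F_q)$-distinguished (Theorem~2 of \cite{DP99}, which is your Gelfand-pair statement). The one spot where your write-up is vaguer than the paper's is the cuspidality (hence irreducibility) of $\pi_0={\rm BC}(\pi_{00})$: the paper makes the role of $n$ odd concrete via the splitting ${\rm Gal}(\F_{q^{2n}}/\F_q)\cong{\rm Gal}(\F_{q^n}/\F_q)\times{\rm Gal}(\F_{q^2}/\F_q)$, which shows that the regular character of $\F_{q^n}^\times$ parametrizing $\pi_{00}$, composed with the norm from $\F_{q^{2n}}^\times$, still has exactly $n$ distinct Galois conjugates.
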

\begin{proof}
 The fact that $\pi$ is an irreducible supercuspidal representation of $\GL_n(D)$ is a well-known fact about compact induction valid in a great generality once we have checked that $\pi_{0}={\rm BC}(\pi_{00})$ is a cuspidal representation. This assertion on $\GL_n(\mathbb{F}_{q^2})$ follows from the fact that $n$ is odd in which case  we have a diagram of fields:

\begin{displaymath}
\xymatrix{
& \F_{q^{2n}} \ar@{-}[rd] \ar@{-}[ld] & \\
\F_{q^{n}} \ar@{-}[rd] & & \F_{q^2} \ar@{-}[ld] \\
& \F_{q} &
}
\end{displaymath}

In particular, 
\[ {\rm Gal}(\mathbb{F}_{q^{2n}}/\mathbb{F}_q)={\rm Gal}(\mathbb{F}_{q^{n}}/\mathbb{F}_q)\times {\rm Gal}(\mathbb{F}_{q^{2}}/\mathbb{F}_{q}).
\]
Thus given a character $\chi_{00}:\mathbb{F}_{q^n}^{\times}\rightarrow \mathbb{C}^{\times}$ whose Galois conjugate are distinct (and which gives rise to the cuspidal 
representation $\pi_{00}$ of $ \GL_n(\mathbb{F}_{q})$),
the character $\chi_{0}:\mathbb{F}_{q^{2n}}^{\times} \rightarrow \mathbb{C}^{\times}$ obtained from $\chi_{00}$ using the norm map:
$\mathbb{F}_{q^{2n}}^{\times}\rightarrow \mathbb{F}_{q^n}^{\times}$, has exactly $n$ distinct Galois conjugates, therefore $\chi_{0}$ gives rise to a cuspidal representation $\pi_{0}$ of $\GL_n(\mathbb{F}_{q^2})$ which is the base change of the representation $\pi_{00}$ of $ \GL_n(\mathbb{F}_{q}).$ 

The distinction of $\pi$ by $\Sp_n(D)$ follows from the earlier observation that reduction mod $\pi_D$ of the 
inclusion $\Sp_n(\OO_D)\subset \GL_n(\OO_D)$ is
\[{\rm U}_n(\mathbb{F}_q)\hookrightarrow \GL_n(\mathbb{F}_{q^2}),
\]
together with the well-known fact, Theorem 2 of  \cite{DP99}, that irreducible representations of $ \GL_n(\mathbb{F}_{q^2})$ which are base change from  $\GL_n(\mathbb{F}_{q})$ are distinguished by ${\rm U}_n(\mathbb{F}_q)$.
\end{proof}
\begin{remark}
 \begin{enumerate}
  \item The Langlands parameter of the irreducible representation $\pi= {\rm ind}_{k^{\times}\GL_n(\OO_D)}^{\GL_n(D)}( \pi_{0})$
is of the form $\sigma=\sigma_{0}\otimes {\rm sp}_2$ where $\sigma_{0}$ is the Langlands parameter of the supercuspidal representation of $\GL_n(k)$ compactly induced from the representation $\chi\cdot \pi_{00}$ of $k^{\times}\GL_n(\OO_k)$, 
and ${\rm sp}_2$ is the $2$-dimensional
natural representation of the ${\rm SL}_2(\mathbb C)$ part of the Weil-Deligne group $W_k'=W_k\times {\rm SL}_2(\mathbb C)$ of $k$.

\item If, on the other hand, the cuspidal representation $\pi_{0}$ of $ \GL_n(\mathbb{F}_{q^2})$ is not obtained by base change from 
 $ \GL_n(\mathbb{F}_{q})$ then the Langlands parameter of such a $\pi$ is that of the cuspidal representation of $\GL_{2n}(k)$ which is obtained by compact induction of the representation of $k^{\times}\GL_{2n}(\OO_k)$ which is $\chi$ on $k^{\times}$, and on $\GL_{2n}(\OO_k)$ it corresponds to a representation of $\GL_{2n}(\mathbb{F}_{q})$
which is the automorphic induction of the representation $\pi_{00}$ of $\GL_n(\mathbb{F}_{q^2})$ (and which is cuspidal since we are assuming that the representation $\pi_0$ of $\GL_n(\mathbb{F}_{q^2})$ is not a base change for $\GL_n(\mathbb{F}_{q})$).
\end{enumerate}
\end{remark}
\section{Conjectures on distinction}
The following conjectures have  been proposed by Dipendra Prasad.
\begin{enumerate}
 \item An irreducible discrete series representation $\pi$ of $\GL_n(D_v)$ is distinguished by $\Sp_n(D_v)$ if and only 
if $\pi$ is supercuspidal and the Langlands parameter $\sigma_{\pi}$ of $\pi$ is of the form $\sigma_{\pi}=\tau\otimes {\rm sp}_r$ where $\tau$
 is irreducible and ${\rm sp}_r$ is the $r$-dimensional
natural representation of the ${\rm SL}_2(\mathbb C)$ part of the Weil-Deligne group $W_k'=W_k\times {\rm SL}_2(\mathbb C)$ of $k$ for $r$ even. By Proposition 7.2 below, this is the case if and only if $r=2$, and $n$ is odd. (This is thus exactly the case in which we 
constructed in the last section a supercuspidal representation of $\GL_n(D_v)$ which is distinguished by $\Sp_n(D_v)$.)

\item We follow the notation of Offen-Sayag, Theorem 1 of \cite{OSE07}, to recall that the unitary representations of $\GL_{2k}(F_v)$ which are distinguished by $\Sp_{2n}(F_v)$ are of the form 
\[ \sigma_1 \times \dots \times \sigma_t\times \tau_{t+1}\times \dots \times \tau_{t+s},
\]
where $\sigma_i$ are the Speh representations $U(\delta_i,2m_i)$ for discrete series representations $\delta_i$ of $\GL_{r_i}(F_v)$, and $\tau_i$
are complementary series representations $\pi(U(\delta_i,2m_i),\alpha_i)$ with $\left\vert\alpha_i\right\vert< 1/2$. We suggest that unitary
representations of $\GL_n(D_v)$ distinguished by $\Sp_n(D_v)$ are exactly those representations of $\GL_{n}(D_v)$ which are of the form
\[ \pi=\sigma_1 \times \dots \times \sigma_t\times \tau_{t+1}\times \dots \times \tau_{t+s}\times\mu_{t+s+1}\times \dots \times \mu_{t+s+r},
\]
where
\begin{enumerate}
 \item [(a)] The parameter $\sigma_{\pi}$ of $\pi$ is relevant for $\GL_n(D_v)$, that is, all irreducible subrepresentations of $\sigma_{\pi}$ have even dimension.
\item [(b)] $\sigma_i$ and $\tau_i$ are as in the theorem of Offen-Sayag recalled above.
\item [(c)] $\mu_i$ are supercuspidal representations of $\GL_{m_i}(D_v)$ as in Part (1) of the conjecture.
\end{enumerate}
\item A global automorphic representation of $\GL_n(D_\mathbb{A})$ is distinguished by $\Sp_n(D_\mathbb{A})$ if and only $\JL(\Pi)$ as an automorphic representation of $\GL_{2n}(\mathbb{A}_F)$ (which is same as $\Pi$ at places of $F$ where $D$ splits) is distinguished by $\Sp_{2n}(\mathbb{A}_F)$.
\end{enumerate}

\begin{proposition} The global conjecture in part 3 above implies the local conjecture in part 1.
\end{proposition}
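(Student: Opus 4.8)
The plan is to deduce the local statement (Conjecture 1) from the global statement (Conjecture 3) by a globalization-and-descent argument, using the machinery already in place in Sections 5 and 6. First I would reduce to the relevant case: by the analysis in Section 5 (the theorem on odd $r$, together with the Badulescu-type description of the two possibilities for $\JL(\Pi)_v$), a discrete series representation of $\GL_n(D_v)$ that is distinguished by $\Sp_n(D_v)$ must be supercuspidal with Langlands parameter $\sigma\otimes{\rm sp}_r$ for some \emph{even} $r$; and by Proposition 7.2 (cited as ``Proposition 7.2 below'') this forces $r=2$ and $n$ odd. Conversely, the explicit construction of Section 6 already exhibits distinguished supercuspidals in exactly the case $r=2$, $n$ odd. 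So the only thing the global conjecture needs to buy us is the \emph{necessity} direction at the level of discrete series that are \emph{not} of the form $\sigma\otimes{\rm sp}_2$ with $n$ odd — equivalently, it must rule out distinction for a supercuspidal $\pi_v$ of $\GL_n(D_v)$ whose parameter is $\sigma\otimes{\rm sp}_r$ with $r$ even but $r\ge 4$ (the odd $r$ case being already handled unconditionally).

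Next I would globalize. Suppose $\pi_v$ is a discrete series representation of $\GL_n(D_v)$ distinguished by $\Sp_n(D_v)$, with $F_v=k$ and $D\otimes F_v = D_v$. Granting (as the remark after the odd-$r$ theorem flags) a globalization theorem for discrete series analogous to \cite{PS08} — or invoking the hypothesis of the statement that globalization of locally distinguished representations is available — choose a global quaternion division algebra $D/F$ with the right local behaviour at $v$ and a discrete automorphic representation $\Pi$ of $\GL_n(D_\mathbb{A})$ with $\Pi_v\cong\pi_v$ that is globally distinguished by $\Sp_n(D_\mathbb{A})$. By the global Conjecture 3, $\JL(\Pi)$ is an automorphic representation of $\GL_{2n}(\mathbb{A}_F)$ distinguished by $\Sp_{2n}(\mathbb{A}_F)$, and it lies in the discrete spectrum by Badulescu's transfer. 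Then Theorem \ref{local-global} (Offen–Sayag) forces $\JL(\Pi)_v$ to be $\Sp_{2n}(F_v)$-distinguished. Now apply the work of Badulescu at $v$: $\JL(\Pi)_v$ is either the discrete series $\JL(\pi_v)$ or a specific Speh representation with parameter $\sigma\otimes(\nu^{(r-1)/2}\oplus\cdots\oplus\nu^{-(r-1)/2})$. The first is generic, hence never $\Sp_{2n}(F_v)$-distinguished by Heumos–Rallis \cite{HR90}; the second is a Speh representation, and by the Offen–Sayag classification (their Theorem 1, recalled as Conjecture 2) it is $\Sp_{2n}(F_v)$-distinguished precisely when the ``$2m_i$'' in the Speh data is even, i.e. precisely when $r$ is even — but one must check further that among even $r$ the construction is consistent only with $r=2$: here I would bring in Proposition 7.2, which identifies parameters $\sigma\otimes{\rm sp}_r$ corresponding to \emph{supercuspidal} representations of $\GL_n(D_v)$ as exactly those with $r=2$ and $n$ odd, so that any $\pi_v$ that is genuinely supercuspidal and distinguished must have $r=2$; and a non-supercuspidal discrete series $\pi_v$ gets excluded by the local Mackey-theory analysis of Section 5 (which already showed no non-supercuspidal discrete series of $\GL_2(D_v)$, and more generally the relevant ones, are distinguished) — this last reduction is where I would need to be careful about whether the Section 5 argument is available for general $n$ or only $n=2$.

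The main obstacle is precisely this matching of numerics on the $\GL_{2n}$ side: ensuring that ``$\JL(\Pi)_v$ is $\Sp_{2n}(F_v)$-distinguished'' plus the Badulescu dichotomy really pins down $r=2$ and $n$ odd, and not merely $r$ even. Concretely, one must verify that for the Speh representation $U(\sigma, r)$ (of $\GL_{rn'}(F_v)$, where $\sigma$ corresponds to a supercuspidal of $\GL_{n'}(F_v)$ and $n=rn'/2$... more precisely with the dimension bookkeeping $\dim\sigma\cdot r = 2n$) the Offen–Sayag distinction criterion — that the Speh representation $U(\delta, 2m)$ is distinguished — is satisfied, which requires $r$ even, say $r=2m$; but then one needs the relevance condition (a) of Conjecture 2, ``all irreducible subrepresentations of $\sigma_\pi$ have even dimension,'' to be compatible, and combined with Proposition 7.2's identification of which such parameters are supercuspidal for $\GL_n(D_v)$, one extracts $m=1$, i.e. $r=2$, and $n$ odd. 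The bookkeeping linking $\dim\sigma$, $r$, $n$, and the split rank constraints is routine but must be done carefully; the genuinely delicate point is invoking Proposition 7.2 correctly to separate the supercuspidal case from the rest, and handling the non-supercuspidal discrete series of $\GL_n(D_v)$ (for $n>2$) by the same local exact-sequence technique used in Section 4, which I would need either to extend or to cite in the appropriate generality. Modulo that, the global conjecture plus Offen–Sayag plus Badulescu plus the Section 6 construction closes the ``if and only if'' in Conjecture 1.
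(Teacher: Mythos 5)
Your proposal has two genuine gaps relative to the paper's argument. First, you assert that the sufficiency direction of Conjecture~1 (that \emph{every} supercuspidal $\pi$ of $\GL_n(D_v)$ with parameter $\tau\otimes{\rm sp}_2$, $n$ odd, is distinguished) is already handled by the explicit construction of Section~6, so that the global conjecture is needed only for necessity. That is wrong: Section~6 produces one particular family of depth-zero supercuspidals (compactly induced from a base-changed cuspidal representation of $\GL_n(\mathbb{F}_q)$) and shows those are distinguished; it says nothing about a general supercuspidal with this type of parameter. The paper's proof devotes its final step precisely to the converse: it globalizes a representation of $\GL_n(\mathbb{A}_F)$ with local parameter $\tau$ (supercuspidal at all places where $D$ is nonsplit), forms the Moeglin--Waldspurger residual representation $\Pi$ on $\GL_{2n}(\mathbb{A}_F)$, deduces global distinction from Offen--Sayag, lifts $\Pi$ back to $\GL_n(D_{\mathbb{A}})$ via Badulescu, and then invokes the global Conjecture~3 in the direction ``$\JL(\Pi)$ distinguished implies $\Pi$ distinguished'' --- a direction your sketch never uses --- finally identifying the local component at $v$ by Lemma~7.4 (fibers of $|{\bf LJ}|$ over a discrete series have cardinality one or two, related by the Zelevinsky involution, which fixes cuspidals). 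Without this step the ``if'' in Conjecture~1 remains unproved.

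Second, for the non-cuspidal discrete series of $\GL_n(D_v)$ with $r$ even --- exactly the case where your argument stalls, as you yourself flag --- you propose to fall back on ``the local Mackey-theory analysis.'' But that analysis exists only for $\GL_2(D)$ in Section~4; nothing in the paper extends it to general $n$. The paper instead runs the global argument here too: globalize $\pi$ to a distinguished $\Pi$, transfer by Badulescu, use Offen--Sayag to realize $\JL(\Pi)$ as a Moeglin--Waldspurger residual $\Sigma\otimes{\rm sp}_d$, read off $d=r$ and $\Sigma_v=\tau$ locally, and then derive a contradiction from Proposition~7.3, which says $|{\bf LJ}|$ carries a Speh representation of $\GL_{2n}(F_v)$ to a cuspidal or a Speh representation of $\GL_n(D_v)$, \emph{never} to a non-cuspidal discrete series. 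Your sketch never invokes Proposition~7.3, and without it the non-cuspidal discrete series are not excluded. A minor further point: the case you isolate as the remaining obstacle (supercuspidal with $r$ even, $r\ge 4$) is actually vacuous once one does the dimension bookkeeping alongside Proposition~7.2, since $r\mid 2n$ with $r$ even forces $(r/2)\mid n$, hence $\gcd(r,n)\ge r/2>1$ whenever $r\ge 4$, contradicting $(r,n)=1$.
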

\begin{proof} To prove the Proposition, note that a
discrete series representation $\pi$ of $\GL_n(D_v)$ with parameter
$\tau \otimes {\rm sp}_r$ with $r$ odd is not distinguished by
$\Sp_n(D_v)$ as follows from Theorem 5.5 and the remark 5.6 following it (which assumes validity 
of the globalization theorem of \cite{PS08} for discrete series representations).

Now we prove that a
non-cuspidal discrete series representation $\pi$ of $\GL_n(D_v)$ with parameter
$\tau \otimes {\rm sp}_r$ with $r$ even are not distinguished by
$\Sp_n(D_v)$. Again we will grant ourselves an automorphic representation $\Pi$ of $\GL_n(D_\A)$ which is globally
distinguished by $\Sp_n(D_\A)$. By the Jacquet-Langlands transfer, we get a representation $JL(\Pi)$
of $\GL_{2n}(\A_F)$ which is distinguished by $\Sp_{2n}(\A_F)$, and therefore by the theorem
of Offen-Sayag $JL(\Pi)$ is in the residual spectrum with the Moeglin-Waldspurger type,
$JL(\Pi)= \Sigma \otimes {\rm sp}_d$,  
where $\Sigma$ is a cuspidal  automorphic representation of $\GL_r(\A_F)$ for some integer $r$, and $d$ is a certain even integer; 
here the notation $\Sigma \otimes {\rm sp}_d$ is supposed to denote a certain Speh representation. 
The only option for $d$ in our case is $d =r$, and 
$\Sigma_v = \tau$. By Proposition 7.3 below, we get a contradiction to $\pi$ being a 
non-cuspidal discrete series representation of $\GL_n(D_v)$.

Finally we prove that if we have a cuspidal representation $\pi$ of $\GL_n(D_v)$ with parameter
$\tau \otimes {\rm sp}_r$ with $r$ even, so $r=2$, and $\dim \tau = n$ odd,  then $\pi$ is distinguished by
$\Sp_n(D_v)$.

Construct an automorphic representation of $\GL_n(\A_F)$ whose local component at the place $v$ of $F$
has Langlands parameter $\tau$ with $\dim \tau = n$. Since $\tau$ is an irreducible representation of the Weil group, we are 
considering supercuspidal representation of $\GL_n(F_v)$, and therefore globalization is possible. W​e ​
 moreover assume in this globalization that the global automorphic representation of $\GL_n(\A_F)$ is supercuspidal at all places of $F$ where $D$ is not split. By Moeglin-Waldspurger, this gives an automorphic representation say $\Pi$ of
$\GL_{2n}(\A_F)$ in the residual spectrum, which by the theorems of Offen and Sayag is distinguished by $\Sp_{2n}(\A_F)$. 
By the work of Badulescu,  $\Pi$ can be lifted to $\GL_n(D_{\A})$, which by our global conjecture (3) above is globally 
distinguished by $\Sp_n(D_{\A})$, and therefore locally
distinguished at every place of $F$. 
​It remains to make sure
that in this Jaquet-Langlands ​
​transfer from ​$\GL_{2n}(\A_F)$ to $\GL_n(D_{\A})$, the local representation obtained for $\GL_n(D_v)$ is the cuspidal
representation $\pi$ with parameter $\tau \otimes {\rm sp}_2$; this is forced on us when $\pi$ is cuspidal by lemma 7.4 below. (The representation
$\pi$ could have changed to its Zelevinsky involution, but $\pi$ being cuspidal remains invariant under
the Zelevinsky involution.) \end{proof}

\vspace{2mm}

The following proposition is  due to  Deligne-Kazhdan-Vigneras [2], Theorem B.2.b.1, as well as  Badulescu, proposition 3.7 of [1].

\begin{proposition} A discrete series representation of  $\GL_n(D_v)$,
 where $D_v$ is an arbitrary division algebra over the local field $F_v$, with parameter $\tau \otimes {\rm sp}_r $ is a cuspidal representation
of $\GL_n(D_v)$  if and only if $(r,n)=1$.
\end{proposition}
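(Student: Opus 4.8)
The plan is to reduce the statement to the structural description of the Jacquet--Langlands correspondence for inner forms of $\GL$ (Deligne--Kazhdan--Vigneras, Badulescu) and then to extract a one-line $\gcd$ identity. Let $d$ denote the index of $D_v$, so that $\GL_n(D_v)$ is an inner form of $\GL_{nd}(F_v)$, and let $\pi$ be an essentially square-integrable representation of $\GL_n(D_v)$ whose transfer $\JL(\pi)$ to $\GL_{nd}(F_v)$ has Langlands parameter $\tau\otimes{\rm sp}_r$, where $\tau$ is an irreducible $m$-dimensional representation of $W_{F_v}$ and $mr=nd$. First I would recall the Zelevinsky--Tadi\'c classification of essentially square-integrable representations of $\GL_n(D_v)$: each such $\pi$ is the generalized Steinberg representation ${\rm St}_{D_v}(\rho,k)$ attached to a segment of length $k$ built on a supercuspidal representation $\rho$ of $\GL_{n'}(D_v)$ with $n=n'k$, and $\pi$ is supercuspidal if and only if $k=1$. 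Hence the proposition is equivalent to the assertion that $\gcd(r,n)=k$.

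Next I would invoke the compatibility of $\JL$ with parabolic induction and with the Zelevinsky/Aubert involution. Writing $\JL(\rho)={\rm St}(\rho_0,s)$ for a supercuspidal $\rho_0$ of $\GL_m(F_v)$, so that $\tau=\sigma_{\rho_0}$, $\dim\tau=m$ and $ms=n'd$, this compatibility sends the $D_v$-segment of length $k$ based at $\rho$ to the $F_v$-segment of length $ks$ based at $\rho_0$; that is, $\JL\big({\rm St}_{D_v}(\rho,k)\big)={\rm St}(\rho_0,ks)$. Comparing with $\JL(\pi)={\rm St}(\rho_0,r)$ yields $r=ks$ and $n'd=ms$. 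The genuinely imported ingredient is the value of the invariant $s$ attached to the supercuspidal $\rho$: since ${\rm St}(\rho_0,s)$ must be the shortest $D_v$-relevant segment based at $\rho_0$, the dimension/relevance constraint $d\mid ms$ forces $s=d/\gcd(d,m)$ and correspondingly $n'=ms/d=m/\gcd(d,m)$. In particular $\gcd(s,n')=\gcd\!\big(d/\gcd(d,m),\,m/\gcd(d,m)\big)=1$, so that $\gcd(r,n)=\gcd(ks,kn')=k\,\gcd(s,n')=k$, and therefore $\pi$ is supercuspidal iff $k=1$ iff $\gcd(r,n)=1$.

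The only real obstacle is the numerical input $s=d/\gcd(d,m)$ together with the precise normalization that makes the segment-transfer statement literally correct; both are exactly the content of the cited theorems on the Jacquet--Langlands correspondence for division algebras, and once they are granted the argument is the $\gcd$ identity above. For a treatment avoiding a black-box citation, I would instead argue on the Galois side: a supercuspidal representation of $\GL_{n'}(D_v)$ has parameter $\tau\otimes{\rm sp}_s$ which must be irreducible as a Weil--Deligne representation and $\GL_{n'}(D_v)$-relevant, and relevance together with $\dim(\tau\otimes{\rm sp}_s)=n'd$ again pins down $s=d/\gcd(d,\dim\tau)$, after which the same computation applies.
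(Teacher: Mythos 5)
The paper does not actually prove this proposition; it simply cites it to Deligne--Kazhdan--Vigneras (Theorem B.2.b.1) and Badulescu (Proposition 3.7). Your argument is a legitimate and correct unwinding of what those citations give, so it is a useful addition rather than a duplication.

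Your reduction is clean: express $\pi={\rm St}_{D_v}(\rho,k)$ with $\rho$ supercuspidal on $\GL_{n'}(D_v)$, $n=n'k$, so that supercuspidality of $\pi$ is the condition $k=1$; then show $\gcd(r,n)=k$. The two imported facts are (i) segment-compatibility of $\JL$, giving $\JL({\rm St}_{D_v}(\rho,k))={\rm St}_{F_v}(\rho_0,ks)$ where $\JL(\rho)={\rm St}(\rho_0,s)$, and (ii) the identification $s=d/\gcd(d,m)$ with $m=\dim\tau$. Once these are granted, the gcd identity $\gcd(ks,kn')=k\gcd(s,n')$ together with $\gcd\bigl(d/\gcd(d,m),\,m/\gcd(d,m)\bigr)=1$ finishes the proof, and your numerical check $n'=ms/d=m/\gcd(d,m)$ is correct. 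Two small cautions. First, the sentence asserting that $s$ is ``the shortest $D_v$-relevant segment based at $\rho_0$'' is stated as an observation but really requires a one-paragraph argument (that $\JL^{-1}({\rm St}(\rho_0,s_0))$ for the minimal relevant $s_0$ is supercuspidal, proved by combining segment-compatibility with minimality, after which the bijectivity of $\JL$ forces $s=s_0$ for your given $\rho$); you flag this correctly as the genuinely imported ingredient, but it is worth spelling out since it is precisely the nontrivial content of the cited theorems. Second, the alternate ``Galois-side'' sketch at the end is not self-contained as written: relevance $d\mid ms$ and $\dim(\tau\otimes{\rm sp}_s)=n'd$ alone do not pin down $s$ without importing the same minimality statement, so that paragraph is best read as a restatement rather than an independent route.
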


In the following proposition, we refer to Badulescu [1] for the notion of a $d$-compatible representation of 
$\GL_{nd}(F_v)$.

\begin{proposition} Let $D_v$ be a division algebra over a local field $F_v$ of dimension $d^2$. 
The map $|{\bf LJ}|$ from 
$d$-compatible irreducible admissibile unitary representations of $\GL_{nd}(F_v)$ 
to irreducible unitary representations
of $\GL_n(D_v)$ 
takes a Speh representation associated to a cuspidal representation on $\GL_{nd}(F_v)$ to either a 
cuspidal representation on $\GL_n(D_v)$, or to a Speh representation, i.e.,  the image under $|{\bf LJ}|$
of  a Speh representation associated to a cuspidal representation on $\GL_{nd}(F_v)$ is never a non-cuspidal discrete
series representation on $\GL_n(D_v)$. 
\end{proposition}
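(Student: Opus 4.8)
The plan is to deduce the statement from Badulescu's explicit description of $|{\bf LJ}|$ on Speh representations, combined with the cuspidality criterion of the preceding proposition. I would not argue via Langlands parameters: $|{\bf LJ}|$ does \emph{not} preserve the Langlands parameter on non-tempered representations --- already for a Speh representation $U(\rho,2)$ of $\GL_{2m}(F_v)$ with $m$ odd and $\dim D_v=4$, the parameter of $U(\rho,2)$ is a direct sum of two twists of the parameter of $\rho$ (hence semisimple), whereas $|{\bf LJ}|(U(\rho,2))$ is a cuspidal representation of $\GL_m(D_v)$ whose parameter is the indecomposable ${\rm sp}_2$-twist of that of $\rho$. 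So the compatibility we use must be the character-theoretic one coming out of Badulescu's Grothendieck-group computation.

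First I would fix notation. A Speh representation of $\GL_{nd}(F_v)$ associated to a cuspidal representation is $U(\rho,k)$ for a unitary cuspidal $\rho$ of $\GL_m(F_v)$ with $mk=nd$, where $U(\rho,k)$ denotes the Speh representation attached to $\rho$ and $k$, i.e.\ the Zelevinsky involute of the generalized Steinberg representation ${\rm St}(\rho,k)$; since $\rho$ is cuspidal its Langlands parameter $\sigma$ is an irreducible representation of $W_{F_v}$ of dimension $m$. Put $s=d/\gcd(m,d)$, the least positive integer with $d\mid ms$, and note that $ms/d=m/\gcd(m,d)$ is a positive integer. By Badulescu's [1] computation of ${\bf LJ}$ on the building blocks of the unitary dual, $U(\rho,k)$ is $d$-compatible precisely when $s\mid k$, and in that case, writing $k=sk'$, we have $|{\bf LJ}|(U(\rho,k))=U(\delta,k')$, where $\delta$ is the essentially square-integrable representation of $\GL_{ms/d}(D_v)$ matched with ${\rm St}(\rho,s)$ under the Deligne-Kazhdan-Vigneras/Badulescu correspondence; since ${\rm St}(\rho,s)$ has parameter $\sigma\otimes{\rm sp}_s$, the representation $\delta$ is the discrete series of $\GL_{ms/d}(D_v)$ with Langlands parameter $\sigma\otimes{\rm sp}_s$.

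The crux --- and the only point where the proposition could conceivably fail, hence the real content --- is to check that $\delta$ is actually \emph{cuspidal}, not merely a discrete series. By the preceding proposition, the discrete series of $\GL_{ms/d}(D_v)$ with parameter $\sigma\otimes{\rm sp}_s$ is cuspidal if and only if $\gcd(s,\,ms/d)=1$; but $s=d/\gcd(m,d)$ and $ms/d=m/\gcd(m,d)$ are coprime, being $d$ and $m$ each divided by their greatest common divisor. Hence $\delta$ is cuspidal.

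Finally I would assemble the conclusion: $|{\bf LJ}|(U(\rho,k))=U(\delta,k')$ with $\delta$ cuspidal. If $k'=1$ this is $\delta$, a cuspidal representation of $\GL_n(D_v)$. If $k'\geq 2$ it is a Speh representation of $\GL_n(D_v)$ associated to the cuspidal $\delta$; such a representation is non-tempered (its cuspidal support contains $\delta\nu^{(k'-1)/2}$, with $\nu$ the absolute value of the reduced norm), so it is not a discrete series at all. In either case $|{\bf LJ}|(U(\rho,k))$ is cuspidal or Speh, and never a non-cuspidal discrete series of $\GL_n(D_v)$, which is the assertion. The main obstacle is the middle paragraph: one must pin down which representation $\delta$ of $\GL_{ms/d}(D_v)$ is produced by Badulescu's formula and recognize that its cuspidality is exactly the coprimality $\gcd\!\big(d/\gcd(m,d),\,m/\gcd(m,d)\big)=1$ that the preceding proposition requires.
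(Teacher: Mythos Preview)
Your proof is correct, but it takes a genuinely different route from the paper's.

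The paper argues via the Zelevinsky involution. Since $|{\bf LJ}|$ commutes with the Zelevinsky involution $t$ and $U(\rho,k)^t={\rm St}(\rho,k)$, one gets
\[
\bigl(|{\bf LJ}|(U(\rho,k))\bigr)^t=|{\bf LJ}|({\rm St}(\rho,k)),
\]
and the right-hand side is a discrete series representation of $\GL_n(D_v)$ (the ordinary Jacquet--Langlands transfer of a discrete series). Thus if $|{\bf LJ}|(U(\rho,k))$ were itself a discrete series representation, both it and its Zelevinsky involute would be discrete series; but on $\GL_n(D_v)$ the Zelevinsky involute of a discrete series is again discrete series only when the representation is supercuspidal (otherwise it is a Speh module). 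This forces $|{\bf LJ}|(U(\rho,k))$ to be cuspidal, and the proposition follows.

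Your argument instead quotes Badulescu's explicit transfer formula $|{\bf LJ}|(U(\rho,k))=U(\delta,k')$ with $\delta$ the Jacquet--Langlands lift of ${\rm St}(\rho,s)$, $s=d/\gcd(m,d)$, and then verifies via the preceding proposition that $\gcd(s,ms/d)=1$, so $\delta$ is cuspidal. This is valid and in fact gives more: it identifies the image precisely, not just its type. The trade-off is that you are importing the full combinatorics of Badulescu's computation of $|{\bf LJ}|$ on the building blocks of the unitary dual, whereas the paper's two-line argument needs only the two structural facts (commutation with $t$, and the characterization of supercuspidals as the $t$-fixed discrete series). In effect, the cuspidality of $\delta$ that you check by the coprimality $\gcd\!\big(d/\gcd(m,d),\,m/\gcd(m,d)\big)=1$ is what the paper extracts for free from the involution argument.
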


\begin{proof} The proof follows from the fact that   $|{\bf LJ}|$ commutes with the Zelevinsky involution, and that 
the Zelevinsky involution of a discrete series representation is itself if and only if the discrete series
 representation is supercuspidal. (We apply this latter fact on $\GL_n(D_v)$.)
\end{proof}

We also had occasion to use the following lemma.

\begin{lemma}
  The map $|{\bf LJ}|$ from 
$d$-compatible irreducible admissibile unitary representations of $\GL_{nd}(F_v)$ to irreducible unitary representations
of $\GL_n(D_v)$ has fibers of cardinality one or two over a discrete series representation of $\GL_n(D_v)$, and if of cardinality two, 
the two elements in the fiber are 
Zelevinsky involution of each other, and the image consists of a cuspidal representation of $\GL_n(D_v)$. 
\end{lemma}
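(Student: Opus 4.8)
The plan is to fix an irreducible discrete series representation $\pi$ of $\GL_n(D_v)$ and describe completely the fibre $F_\pi:=|{\bf LJ}|^{-1}(\pi)$ inside the set of $d$-compatible irreducible unitary representations of $\GL_{nd}(F_v)$, combining three ingredients: Tadi\'c's classification of the unitary dual of $\GL_{nd}(F_v)$, Badulescu's explicit description of $|{\bf LJ}|$ on it, and the compatibility of $|{\bf LJ}|$ with the Zelevinsky involution $\sigma\mapsto\sigma^{t}$ already used in the proof of Proposition 7.3. First I would show that every member of $F_\pi$ is a Speh representation $U(\delta',k')$ of $\GL_{nd}(F_v)$ (with the discrete series allowed, as the case $k'=1$). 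Indeed, by Tadi\'c every irreducible unitary representation of $\GL_{nd}(F_v)$ is an irreducible Langlands product of basic blocks — Speh representations and unitary complementary series $\pi(U(\delta',k'),\alpha)$ with $0<\alpha<1/2$ — and since $|{\bf LJ}|$ is compatible with such products and sends complementary series to complementary series, while $\pi$, being a discrete series, is neither a non-trivial Langlands product nor a genuine complementary series, a preimage of $\pi$ must be a single Speh representation.

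Next I would produce the candidate preimages. Writing the Langlands parameter of $\pi$ as $\tau\otimes{\rm sp}_r$ with $\tau$ irreducible on $W_k$, the ordinary Jacquet--Langlands lift is the discrete series ${\bf JL}(\pi)={\rm St}_r(\rho_\tau)$ of $\GL_{nd}(F_v)$ with the same parameter, $\rho_\tau$ the supercuspidal of $\GL_{nd/r}(F_v)$ attached to $\tau$, and $|{\bf LJ}|({\bf JL}(\pi))=\pi$, so ${\bf JL}(\pi)\in F_\pi$; applying the commutation with $(\cdot)^{t}$ also gives $|{\bf LJ}|(({\bf JL}(\pi))^{t})=\pi^{t}$, so $({\bf JL}(\pi))^{t}$ is $d$-compatible and lies in $F_{\pi^{t}}$. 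Now I would split into cases using Proposition 7.2 ($\pi$ cuspidal $\iff(r,n)=1$) and the fact that a discrete series of $\GL_{nd}(F_v)$ or of $\GL_n(D_v)$ is $(\cdot)^{t}$-fixed iff it is supercuspidal (the latter being the fact recalled in the proof of Proposition 7.3). If $\pi$ is not cuspidal then $r\ge2$, so ${\bf JL}(\pi)$ is non-supercuspidal, $({\bf JL}(\pi))^{t}=U(\rho_\tau,r)$ is distinct from ${\bf JL}(\pi)$, and since $\pi^{t}\ne\pi$ one has $({\bf JL}(\pi))^{t}\notin F_\pi$; if $\pi$ is supercuspidal with $r=1$ then ${\bf JL}(\pi)=\rho_\tau$ is supercuspidal and $(\cdot)^{t}$-fixed; if $\pi$ is supercuspidal with $r\ge2$ then ${\bf JL}(\pi)$ is non-supercuspidal while $\pi^{t}=\pi$, so both ${\bf JL}(\pi)$ and its distinct Zelevinsky dual $U(\rho_\tau,r)$ belong to $F_\pi$.

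It remains to rule out any other preimage. If $U(\delta',k')\in F_\pi$ with $k'=1$, then $\delta'$ is a discrete series of $\GL_{nd}(F_v)$ with $|{\bf LJ}|(\delta')=\pi$, which forces $\delta'={\bf JL}(\pi)$ because ${\bf JL}$ is a bijection from discrete series of $\GL_n(D_v)$ onto $d$-compatible discrete series of $\GL_{nd}(F_v)$. If $k'\ge2$, I would invoke Badulescu's explicit formula for $|{\bf LJ}|$ on Speh representations: its cuspidal-$\delta'$ instance is exactly Proposition 7.3, and the case $\delta'={\rm St}_t(\rho'')$ with $t\ge2$ reduces to that instance by transposing the rectangle, $U({\rm St}_t(\rho''),k')^{t}=U({\rm St}_{k'}(\rho''),t)$. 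The upshot is that $|{\bf LJ}|(U(\delta',k'))$ is never a non-cuspidal discrete series of $\GL_n(D_v)$, and when it is a discrete series at all one must have $\delta'=\rho'$ cuspidal with image of Langlands parameter $\sigma_{\rho'}\otimes{\rm sp}_{k'}$, necessarily supercuspidal; comparing this with the parameter $\tau\otimes{\rm sp}_r$ of $\pi$ forces $\rho'=\rho_\tau$ and $k'=r\ge2$, in particular $\pi$ cuspidal and $U(\delta',k')=U(\rho_\tau,r)$. Thus $F_\pi=\{{\bf JL}(\pi)\}$ in the first two cases above and $F_\pi=\{{\bf JL}(\pi),U(\rho_\tau,r)\}$ in the third, where $F_\pi$ has exactly two elements, they are Zelevinsky involutions of one another, and $\pi$ is cuspidal — which is the statement of the lemma.

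The main obstacle is precisely the last step: pinning down, via Badulescu's description of $|{\bf LJ}|$ on Tadi\'c's classification, when $|{\bf LJ}|$ of a Speh representation $U(\delta',k')$ with $k'\ge2$ is square-integrable on $\GL_n(D_v)$. Proposition 7.3 handles $\delta'$ cuspidal; the residual case $\delta'={\rm St}_t(\rho'')$ with $t,k'\ge2$ needs Badulescu's formula together with the transposition symmetry of rectangular Speh representations to see that the image is always a genuine Speh representation, never discrete series. The rest — compatibility of $|{\bf LJ}|$ with Langlands products and with $(\cdot)^{t}$, the characterisation of $(\cdot)^{t}$-fixed discrete series as supercuspidals, and the bijectivity of ${\bf JL}$ on discrete series — is standard.
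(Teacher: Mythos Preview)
Your strategy is considerably more elaborate than the paper's two-line argument. The paper simply observes that $|{\bf LJ}|$ preserves cuspidal support on the split side, so any preimage of $\pi$ (with parameter $\tau\otimes{\rm sp}_r$) must be a subquotient of the single-segment principal series
\[
\rho_\tau\nu^{(r-1)/2}\times\rho_\tau\nu^{(r-3)/2}\times\cdots\times\rho_\tau\nu^{-(r-1)/2}
\]
on $\GL_{nd}(F_v)$; and a segment induction has exactly two irreducible unitary subquotients, the Steinberg ${\rm St}_r(\rho_\tau)$ and the Speh $U(\rho_\tau,r)$, which are Zelevinsky duals of one another. The cuspidality clause then follows exactly as in your final paragraph. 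This bypasses Tadi\'c's classification and any explicit Badulescu formula entirely.

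Your route via Tadi\'c and multiplicativity of $|{\bf LJ}|$ is sound through the case $\delta'$ cuspidal, but there is a genuine gap in your treatment of $\delta'={\rm St}_t(\rho'')$ with $t,k'\ge2$. The identity $U({\rm St}_t(\rho''),k')^{t}=U({\rm St}_{k'}(\rho''),t)$ does \emph{not} reduce to the cuspidal-$\delta'$ instance covered by Proposition~7.3: after transposing, the building block is ${\rm St}_{k'}(\rho'')$, which is still non-cuspidal since $k'\ge2$. The mechanism behind Proposition~7.3 was that the Zelevinsky dual of $U(\rho,k)$ is the \emph{discrete series} ${\rm St}_k(\rho)$, whose image under $|{\bf LJ}|$ is automatically a discrete series on the $D$-side; when both $t,k'\ge2$ the dual is another proper Speh, and your argument gives no contradiction. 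The cleanest repair is precisely the paper's observation: the cuspidal support of $U({\rm St}_t(\rho''),k')$ with $t,k'\ge2$ is a genuine $t\times k'$ rectangle with repeated exponents, hence cannot match the multiplicity-free segment supporting any discrete series, so this case never contributes to the fibre.
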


\begin{proof} Assume that we are considering the fibers of the map $|{\bf LJ}|$ over a discrete series representation
of $\GL_n(D_v)$
with Langlands parameter $\tau \otimes {\rm sp}_r $. 
All the representations in the fiber are contained in the 
principal series representation 
$$\tau \nu^{(r-1)/2} \times \tau \nu^{(r-3)/2} \times \cdots \times \tau \nu^{-(r-1)/2}.$$
It is well-known that there are exactly two irreducible unitary representations among sub-quotients of this principal series,
one of which is the Langlands quotient which is a Speh module, and the other the discrete series representation
with parameter $\tau \otimes {\rm sp}_r $, proving the lemma.
 \end{proof}

\bibliographystyle{plain}

\end{document}